\def\a{\alpha}
\def\b{\beta}
\def\d{\delta}
\def\D{\Delta}
\def\g{\gamma}
\def\s{\sigma}
\def\t{\theta}
\def\vp{\varphi}
\def\ot{\otimes}
\def\rt{\triangleright}
\def\lt{\triangleleft}
\def\D{\Delta}
\def\Ad{\mathop{\rm Ad}\nolimits}
\def\ad{\mathop{\rm ad}\nolimits}
\def\dt{\left.\frac{d}{dt}\right|_{_{t=0}}}
\def\ds{\left.\frac{d}{ds}\right|_{_{s=0}}}
\newcommand{\G}[1]{\mathfrak{#1}}
\newcommand{\C}[1]{\mathcal{#1}}
\newcommand{\btr}{\blacktriangleright}
\renewcommand{\leq}{\leqslant}
\renewcommand{\geq}{\geqslant}
\numberwithin{equation}{section}
\newtheorem{theorem}{Theorem}[section]
\newtheorem{proposition}{Proposition}[section]
\newtheorem{corollary}[theorem]{Corollary}
\theoremstyle{definition}
\newtheorem*{remark}{Remark}
\newtheorem{example}[theorem]{Example}
\title{Dynamics over Homogeneous Spaces}
\author{FİLİZ ÇAĞATAY UÇGUN}
\address{Department of Mathematics, Maltepe University, 34857 Maltepe-Ä°stanbul, Turkey}
\email{filizcagatayucgun@maltepe.edu.tr}
\author{O\u{g}ul Esen}
\address{Department of Mathematics, Gebze Technical University,  41400 Gebze-Kocaeli, Turkey}
\email{oesen@gtu.edu.tr}
\author{Serkan Sütlü}
\address{Department of Mathematics, Gebze Technical University,  41400 Gebze-Kocaeli, Turkey}
\email{serkansutlu@gtu.edu.tr}
\begin{document}

\begin{abstract}
We present the Euler-Lagrange and Hamilton's equations for a system whose configuration space is a unified product Lie group $G=M\bowtie_\g H$, for some $\g:M\times M \to H$. By reduction, then, we obtain the Euler-Lagrange type and Hamilton's type equations of the same form for the quotient space $M\cong G/H$, although it is not necessarily a Lie group. We observe, through further reduction, that it is possible to formulate the Euler-Poincar\'{e} type and Lie-Poisson type equations on the corresponding quotient $\G{m}\cong \G{g}/\G{h}$ of Lie algebras, which is not a priori a Lie algebra. Moreover, we realize the $n$th order iterated tangent group $T^{(n)}G$ of a Lie group $G$ as an extension of the $n$th order tangent group $T^nG$ of the same type. More precisely, $\G{g}$ being the Lie algebra of $G$, $T^{(n)}G \cong  \G{g}^{\times \,2^n-1-n} \bowtie_\g T^nG$ for some $\g:\G{g}^{\times \,2^n-1-n} \times \G{g}^{\times \,2^n-1-n} \to T^nG$. We thus obtain the $n$th order Euler-Lagrange (and then the $n$th order Euler-Poincar\'e) equations over $T^nG$ by reduction from those on $T(T^{n-1}G)$. Finally, we illustrate our results in the realm of the Kepler problem, and the non-linear tokamak plasma dynamics.
\end{abstract}

\keywords{Unified product; Lie-Poisson dynamics; Euler-Poincar\'{e} dynamics}

\maketitle

\tableofcontents

\setlength{\parskip}{0.75cm}
\onehalfspace

\section{Introduction}

A \emph{Klein geometry} is defined as a pair $(G,H)$ of Lie groups, where $H \subseteq G$ is a closed subgroup. The group $G$ is then called the \emph{principal group}, while the quotient $G/H$, which happens to be a smooth manifold, is referred as the \emph{space} of the Klein geometry.

The classical geometries may all be realized as Klein geometries. For instance, the Euclidean geometry is encoded by the pair $(E(n),O(n))$ of the Euclidean group $E(n)\cong \mathbb{R}^n\rtimes O(n)$, and the orthogonal group $O(n)$. The underlying space, then, is the Euclidean space $\mathbb{R}^n$. Similarly, the affine geometry is modeled by the pair $(Aff(n),GL(n))$ of the affine group $Aff(n) \cong \mathbb{R}^n\rtimes GL(n)$ and the general linear group $GL(n)$, in which case the underlying space is the affine space. The spherical geometry, on the other hand, is represented by $(O(n+1),O(n))$, and its underlying space happens to be the $n$-sphere $S^n$. We refer the reader to \cite{Sharpe-book} for further details, and the Klein models of the projective, hyperbolic, conformal, and elliptic geometries.

A Klein geometry $(G,H)$ is called \emph{reductive}, in which case the underlying space $G/H$ is named as a \emph{reductive homogeneous space}, if the Lie algebra $\G{h}$ of $H$ has an $H$-invariant complement in the Lie algebra $\G{g}$ of $G$. Any \emph{symmetric space}\footnote{A symmetric space is defined to be a Riemannian manifold with an invariant (under parallel translations) curvature tensor, \cite{Helgason-book}.}, for instance, gives rise to a reductive homogeneous space, \cite{Helgason-book}. In this case, the Lie algebra $\G{g}$ decomposes as
\[
\G{g} = \G{m} \oplus \G{h},
\]
so that
\begin{equation}\label{bracket-pattern}
[\G{h}, \G{h}] \subseteq \G{h}, \qquad [\G{h}, \G{m}] \subseteq \G{m}, \qquad [\G{m}, \G{m}] \subseteq \G{h}.
\end{equation}
A comparable bracket pattern may be observed in the structure of semisimple Lie algebras, \cite[Subsect. 6.1.1]{KlimSchm-book}. Let $\G{g}$ be a (finite-dimensional) complex semisimple Lie algebra. Then, $\G{h} \subseteq \G{g}$ being the Cartan subalgebra, we have
\[
\G{g} = \left( \bigoplus_{\a \neq 0}\,\G{g}_\a \right) \oplus \G{h},
\]
where each $\a$ is a (non-wanishing) linear form on $\G{h}$, called a \emph{root}, and $\G{g}_\a$ is the eigenspace of the operator $\a$, called the corresponding \emph{root subspace}. Moreover, setting a basis $H_1,H_2, \ldots, H_\ell$ of $\G{h}$, it is possible to choose basis vector $E_\a$ in each $\G{g}_\a$ so that $[E_\a,E_{-\a}] = H_\a \in \G{h}$. Then, $\D$ denoting the set of all roots $\a$, the set
\[
H_1,H_2, \ldots, H_\ell, \quad E_\a, \,\, \a \in \D
\] 
happens to be a basis for $\G{g}$, called the \emph{Cartan-Weyl basis}. The bracket operation on $\G{g}$ may then be given by
\[
[H_i,H_j] = 0, \quad [H_i,E_\a] = \a(H_i)E_\a, \quad [E_\a,E_{-\a}] = H_\a, \quad [E_\a,E_\b] =N_{\a\b}E_{\a+\b}
\]
for any $\a, \b\in \D$, so that $\a +\b \neq 0$, and any $1 \leq i,\,j\leq \ell$, where $N_{\a\b} = 0$ whenever $\a+\b \notin \D$.


The algebraic foundations of such Lie algebra decompositions has first been studied thoroughly in \cite{AgorMili14}, wherein $\G{g} = \G{m}\oplus \G{h}$ is called the \emph{unified product} of $\G{h}$ and $\G{m}$.

On the other hand, for a physical system whose configuration space is a manifold, the Lagrangian realization of the dynamics is determined by a Lagrangian function on the tangent bundle of the configuration space by means of a variational principle, while the Hamiltonian realization is encoded by a Hamiltonian function over the cotangent bundle of the configuration space, \cite{abraham1978foundations,arnold1989mathematical}. 

In case the configuration space is a Lie group $G$, then relying on the (semi-direct product) trivialization $TG \cong G \ltimes \G{g}$ of the tangent group (into $G$ and its Lie algebra $\G{g}$), the equations (of motion) that govern the dynamics are given by
\begin{equation}\label{EL-eqn}
\frac{d}{dt}\frac{\d \C{L}}{\d \xi} = T^\ast L_g\left(\frac{\d \C{L}}{\d g}\right) - \ad^\ast_\xi\left(\frac{\d \C{L}}{\d \xi}\right),
\end{equation}
which are known as the (trivialized) \emph{Euler-Lagrange equations} of the (trivialized) Lagrangian $\C{L}:G\ltimes \G{g} \to \mathbb{R}$, $\C{L}=\C{L}(g,\xi)$, see for instance \cite{EsenSutl17}.

From the Hamiltonian point of view, on the other hand, appealing to the trivialization $T^\ast G \cong \G{g}^\ast \rtimes G$ of the cotangent group, the equations (of motion) that represent the dynamics of the system are given by
\begin{align}\label{HamiltonEqns}
\begin{split}
& \frac{d\mu}{dt} = -T^\ast R_g\left(\frac{\d \C{H}}{\d g}\right) + \ad^\ast_{\frac{\d \C{H}}{\d \mu}}(\mu), \\
& \frac{dg}{dt} = TR_g\left(\frac{\d \C{H}}{\d \mu}\right),
\end{split}
\end{align}
which are called the \emph{Hamilton's equations} of the (trivialized) Hamiltonian $\C{H}:\G{g}^\ast \rtimes G \to \mathbb{R}$, $\C{H}=\C{H}(\mu,g)$, \cite{EsenSutl16}.

The theory of reduction in mechanical systems involves investigating how the dimensionality of the system's phase space can be reduced through the utilization of available symmetries and associated conservation laws. In the case the systems configuration space is a Lie group $G$, both the tangent group $TG$ and the cotangent group $T^\ast G$ serve as excellent examples of systems that exhibit symmetry. In either of these cases the symmetry is encoded by the action of the Lie group $G$.

In the Lagrangian framework, the reduction of \eqref{EL-eqn} by the $G$-action then yields the \emph{Euler-Poincar\'e equations}
\begin{equation}\label{EP-eqn}
\frac{d}{dt}\frac{\d \ell}{\d \xi} = - \ad^\ast_\xi\left(\frac{\d \ell}{\d \xi}\right),
\end{equation}
of the reduced Lagrangian $\ell:G \backslash TG \cong \G{g} \to \mathbb{R}$, $\ell=\ell(\xi)$. For further details of the Lagrangian reduction we refer the reader to \cite{cendra1998lagrangian,n2001lagrangian}.  The Hamiltonian counterpart of the reduction, on the other hand, produces the \emph{Lie-Poisson equations}
\begin{equation}\label{LP-Eqns}
\frac{d\mu}{dt} =  \ad^\ast_{\frac{\d \G{H}}{\d \mu}}(\mu), 
\end{equation}
of the reduced Hamiltonian $\G{H}:T^\ast G / G \cong \G{g}^\ast  \to \mathbb{R}$, $\G{H}=\G{H}(\mu)$. For more about the Hamiltonian reduction the reader may consult to \cite{MaWe74,MaMiOrPeRa07,MaRa86}.

In the present manuscript we delve into the Euler-Poincar\'e equations on a unified product Lie algebra $\G{g} = \G{m}\oplus \G{h}$, and the corresponding Lie-Poisson equations defined on the dual space $\G{g}^\ast$. Within both the Lagrangian and Hamiltonian formalisms, reduction by $\G{h}$ yields equations that represent the dynamics over the quotient space $\G{m}$ and its dual $\G{m}^\ast$ respectively. These equations, as such, are the most natural substitutes of the Euler-Poincar\'e equations (resp. the Lie-Poisson equations) over the quotient space $\G{m}\cong\G{g}/\G{h}$ (resp. $\G{m}^\ast$), which is not initially a Lie algebra. 

The Lie group of a unified product Lie algebra $\G{g} = \G{m} \oplus \G{h}$ is a \emph{unified product} $G$ in the level of Lie groups, \cite{AgorMili14-II}. More explicitly, the group $G$ is entirely determined by a subgroup $H \subseteq G$, along with a complement $M\subseteq G$ (in the sense that $G=M\times H$ as manifolds) which is not necessarily a subgroup. Among other things, we shall hereby show that the tangent (resp. cotangent) group of a unified product is also a unified product Lie group. Consequently, the Euler-Lagrange equations on the tangent group $TG \cong TM\times TH$ will lead, by the $TH$ reduction, to the Euler-Lagrange type equations on $TM = T(G/H)$ even though $M = G/H$ is not a priori a Lie group. The recipe functions identically in the Hamiltonian counterpart of the dynamics. Specifically, we shall obtain the Hamilton's equations on the cotangent group $T^\ast G \cong T^\ast M\times T^\ast H$. Then, by the $T^\ast H$ reduction we shall derive the Hamilton's type equations over $T^\ast M = T^\ast(G/H)$, once again, despite $M = G/H$ is not inherently a Lie group.

We shall illustrate our results on two examples. We first realize the Lie algebra structure on the energy-momentum space of the Kepler problem considered in \cite{Marl12} as a unified product. Accordingly we shall obtain the Euler-Poincar\'e equations over the energy-momentum space, as well as the Lie-Poisson equations over its dual. As a second illustration we will consider the Hamiltonian four-field model, proposed in \cite{HazeKotsMorr85}, see also \cite{HazeHsuMorr87}, for two-dimensional compressible reduced magnetohydrodynamics. The model offers a simplified representation of nonlinear tokamak dynamics, accommodating finite ion Larmor radius physics, along with other effects associated with compressibility and electron adiabaticity. The Lie algebra that underlies the four field model was worked out in \cite{ThifMorr00} either, from the point of view of the Lie algebra extensions. We shall represent this Lie algebra as a unified product, and will thus obtain the equations of motion both in Lagrangian and Hamiltonian frameworks. 

The flexible decomposition offered by a unified product allows to study the higher order dynamics as well. More precisely, we observe that the $n$th iterated tangent group 
\[
T^{(n)}G := \underset{n-many}{\underbrace{T(T(T\ldots (T}}G)))
\]
of a Lie group $G$ is a unified product of the $n$th order tangent group $T^nG$, and $2^n-(n+1)$-many copies of the Lie algebra $\G{g}$ of $G$;
\[
T^{(n)}G \cong  \G{g}^{\times \,2^n-1-n} \times T^nG.
\]
As such, we shall be able to access the dynamics (both Lagrangian and Hamiltonian) on $T^nG$ from that on $T^{(n)}G$ through $\G{g}^{\times \,2^n-1-n}$-reduction. We shall illustrate all details in the case $n=3$.

\subsubsection*{Notation and Conventions}
 
We shall consider decompositions of Lie algebras, and Lie groups. As for Lie algebras we shall consider the decomposition $\G{g} \cong \G{m}\,\oplus\,\G{h}$ of a Lie algebra into a vector subspace $\G{m} \subseteq \G{g}$ and a Lie subalgebra $\G{h} \subseteq \G{g}$. Their generic elements are denoted by $v,v_1,v_2,...\in \G{m}$, $\eta,\eta_1,\eta_2,... \in \G{h}$, with $(v_1,\eta_1),(v_2,\eta_2) \in \G{g}$. Similarly, we shall consider the decomposition $G\cong M \times H$ of a Lie group into a submanifold $M\subseteq G$ and a subgroup $H\subseteq G$. We shall, in this case, refer the generic elements as $m,m_1,m_2,...\in M$ and $h,h_1,h_2,... \in H$, as well as $(m_1,h_1),(m_2,h_2) \in G$.

\section{Cocycle Double Cross Constructions}

\subsection{Cocycle Double Cross Product Lie Groups}\label{Sec-cdcplg}~

We shall first recall from \cite[Subsec. 2.2]{EsenGuhaSutl22} the \emph{cocycle double cross product} Lie group construction. This construction has first been introduced in \cite[Sec. 3]{AgorMili14-II}, and was named as the \emph{unified product} construction. In an attempt to emphasize the presence of the cocycle term, and clarify its place in a hierarchy of extensions, the term \emph{cocycle double cross product} was adopted in \cite{EsenGuhaSutl22}. In the present text we shall stick to the notation and the terminology of \cite{EsenGuhaSutl22}.

Let $M$ be a manifold with a distinguished element $o\in M$, and let $H$ be a Lie group acting on $M$ via 
\begin{equation}\label{left-action-gr}
\vp:H\times M \to M, \qquad (h, m)\mapsto h \rt m,
\end{equation}
so that, $1\in H$ being the identity element,
\[
1 \rt m = m, \qquad h\rt o = o
\]
are satisfied. Let, further, the pair $(M,H)$ be equipped with the (smooth) maps 
\begin{align}
& \s:H\times M \to H, \qquad (h, m)\mapsto \s(h, m),\label{right-action-gr} \\
& \Phi:M\times M \to M, \qquad (m_1, m_2)\mapsto \Phi(m_1,m_2)=:m_1 \cdot m_2, \label{phi-map-gr} \\
& \g:M\times M \to H, \qquad (m_1, m_2)\mapsto \g(m_1, m_2) , \label{theta-map-gr} 
\end{align}
satisfying
\begin{equation}\label{normalization-gr}
\s(h,o) = h, \qquad \s(1,m) = 1, \qquad \g(o,o) = 1.
\end{equation}
Along the lines of \cite{EsenGuhaSutl22}, we shall call the pair $(M,H)$ a \emph{cocycle matched pair} if 
\begin{align}
& o\cdot m = m = m\cdot o, \label{phi-map-gr-e} \\
& \g(m,o) = 1 = \g(o,m), \label{theta-map-gr-e} \\
& h\rt (m_1\cdot m_2) = (h\rt m_1)\cdot (\s(h, m_1) \rt m_2), \label{left-action-gr-on-multp} \\
& \s(h,m_1\cdot m_2)\ast\g(m_1, m_2) = \g(h\rt m_1, \s(h, m_1) \rt m_2)\ast\s(\s(h,m_1), m_2), \label{right-action-theta-comp-gr} \\
& \s(h_1\ast h_2, m) = \s(h_1, h_2\rt m)\ast\s(h_2, m),  \label{right-action-gr-on-multp}  \\
& m_1\cdot (m_2\cdot m_3) = (m_1\cdot m_2) \cdot (\g(m_1, m_2)\rt m_3), \label{phi-map-gr-assoc} \\
& \g(m_1, m_2\cdot m_3)\ast\g(m_2, m_3) = \g(m_1\cdot m_2, \g(m_1, m_2)\rt m_3)\ast\s(\g(m_1, m_2), m_3),  \label{theta-map-gr-cocycle} \\
& \text{for any } m\in M \text{ there is } m^r\in M \text{ such that } m\cdot  m^r = o,\label{inverses-in-M-gr}
\end{align}
are satisfied, where $H\times H\to H$, $(h_1,h_2)\mapsto h_1\ast h_2$ denotes the multiplication in $H$. In this case, as was showed in  \cite[Thm. 3.5]{AgorMili14-II} and \cite[Prop. 2.3]{EsenGuhaSutl22}, the product space $M\bowtie_\g H:= M\times H$ happens to be a Lie group by
\begin{equation}\label{cocycle-double-cross-prod-multp}
(m_1,h_1)(m_2,h_2) = \Big(m_1\cdot (h_1\rt m_2 ),\,\g(m_1,h_1\rt m_2)\ast\s(h_1 ,m_2)\ast h_2 \Big),
\end{equation}
with $(o,1) \in M\times H$ being the unit. We shall call this Lie group the \emph{cocycle double cross product} of the pair $(M,H)$. Let us note also that, given any $(m,h) \in M\bowtie_\g H$, we may observe at once that
\begin{equation}\label{inverse-cdcp}
 (m,h)^{-1} =  \Big(h^{-1}\rt m^r,\,\s(h^{-1},m^r)\ast \g(m,m^r)^{-1}\Big).
\end{equation}

In comparison with \cite[Thm. 7.2.3]{Majid-book}, \cite[Thm. 3.1]{AgorMili14-II}, and \cite[Prop. 2.3]{BespDrab01}, the cocycle double cross product construction has the following property, see also \cite[Prop. 2.4]{EsenGuhaSutl22}. 

\begin{proposition}\label{prop-universal-gr}
Let $M$ be a (smooth) manifold, and let $G,H$ be two Lie groups so that
\[
\xymatrix{
M \ar@{^{(}->}[r]_i  & G    &  \ar@{_{(}->}[l]^j_{\rm gr}H,
}
\]
as manifolds. Let also $\mu:G\times G\to G$ being for the multiplication in $G$, the map $\mu\circ (i\times j):M\times H \to G$ be a diffeomorphism. Then, $G$ is a double cross product of the pair $(M,H)$. In this case, the maps \eqref{left-action-gr}, \eqref{right-action-gr}, \eqref{phi-map-gr}, and \eqref{theta-map-gr} are obtained from
\[
hm = (h\rt m)\s(h,m), \qquad m_1m_2 = (m_1\cdot m_2)\g(m_1,m_2),
\]
for any $m_1,m_2\in M$, and any $h\in H$.
\end{proposition}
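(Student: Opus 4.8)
The plan is to read off the structure maps $\vp,\s,\Phi,\g$ of a cocycle double cross product directly from the multiplication of $G$, using the unique factorization furnished by the hypothesis. Set $\nu:=\mu\circ(i\times j):M\times H\to G$, $\nu(m,h)=i(m)j(h)$, which is a diffeomorphism by assumption; let $\pi_M:G\to M$ and $\pi_H:G\to H$ be the two components of $\nu^{-1}$, so that $\pi_M,\pi_H$ are smooth and every $g\in G$ has the unique normal form $g=i(\pi_M(g))\,j(\pi_H(g))$. Here $o\in M$ denotes the distinguished element, for which $i(o)=1_G$, while $j(1_H)=1_G$ since $j$ is a homomorphism; consequently $\pi_M(i(m))=m$, $\pi_H(i(m))=1_H$, $\pi_M(j(h))=o$ and $\pi_H(j(h))=h$.

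Next I would define $h\rt m:=\pi_M\big(j(h)i(m)\big)$, $\s(h,m):=\pi_H\big(j(h)i(m)\big)$, $m_1\cdot m_2:=\pi_M\big(i(m_1)i(m_2)\big)$ and $\g(m_1,m_2):=\pi_H\big(i(m_1)i(m_2)\big)$, equivalently the identities $j(h)i(m)=i(h\rt m)\,j(\s(h,m))$ and $i(m_1)i(m_2)=i(m_1\cdot m_2)\,j(\g(m_1,m_2))$ announced in the statement; these four maps are smooth because $\pi_M,\pi_H$ and the multiplication of $G$ are. Substituting $m=o$ or $h=1_H$ into the two identities and using $i(o)=1_G=j(1_H)$ then yields at once $1\rt m=m$, $h\rt o=o$, the normalizations \eqref{normalization-gr}, and the relations \eqref{phi-map-gr-e}, \eqref{theta-map-gr-e}.

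The core of the argument is that associativity of the multiplication of $G$, interpreted through uniqueness of the normal form, produces all the remaining cocycle matched pair conditions at once. Concretely I would expand the three triple products $j(h)i(m_1)i(m_2)$, $j(h_1)j(h_2)i(m)$ and $i(m_1)i(m_2)i(m_3)$ in two ways, each time bringing every partial product into the form $i(\cdot)\,j(\cdot)$ by iterated use of the two defining identities together with the homomorphism property of $j$. Matching the $M$-components then gives \eqref{left-action-gr-on-multp}, the left-action axiom $(h_1h_2)\rt m=h_1\rt(h_2\rt m)$, and \eqref{phi-map-gr-assoc}, respectively; matching the $H$-components gives \eqref{right-action-theta-comp-gr}, \eqref{right-action-gr-on-multp} and \eqref{theta-map-gr-cocycle}, respectively. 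For the last axiom \eqref{inverses-in-M-gr}, given $m\in M$ I would factor $i(m)^{-1}=i(a)j(b)$; then $i(m)i(a)=j(b^{-1})=i(o)j(b^{-1})$, and uniqueness of the normal form forces $m\cdot a=o$, so $m^r:=a$ is the required right inverse. Finally, expanding a generic product $\big(i(m_1)j(h_1)\big)\big(i(m_2)j(h_2)\big)$ in the same way shows that, transported along $\nu$, the multiplication of $G$ becomes exactly \eqref{cocycle-double-cross-prod-multp}; hence $\nu$ is an isomorphism of Lie groups $M\bowtie_\g H\xra{\sim}G$ carrying the canonical inclusions to $i$ and $j$.

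I expect the obstacle here to be organizational rather than conceptual: the verification consists of a fair number of identities, and the only real care needed is to rewrite each triple product into normal form in the correct order and to keep the two factorizations genuinely independent before invoking uniqueness. One point I would flag explicitly is the pointedness $i(o)=1_G$: if $i(M)$ were to miss the identity of $G$ the conclusion would fail, so either $i$ is taken to be unital or, equivalently, $\nu$ is taken to send $(o,1_H)$ to $1_G$.
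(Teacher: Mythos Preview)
Your proof sketch is correct and follows the standard factorization-and-associativity argument. Note, however, that the paper does not actually supply its own proof of this proposition: it is stated with references to \cite[Thm.~7.2.3]{Majid-book}, \cite[Thm.~3.1]{AgorMili14-II}, \cite[Prop.~2.3]{BespDrab01}, and \cite[Prop.~2.4]{EsenGuhaSutl22}, and the argument is deferred to those sources. What you have written is precisely the approach one finds there, so there is nothing to compare against within the paper itself. Your flag about the pointedness $i(o)=1_G$ is well taken; it is implicit in the setup of Section~\ref{Sec-cdcplg} but not spelled out in the proposition's hypotheses.
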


\begin{remark}
It worths to remark that the cocycle double cross product (or unified product) construction provides a unique frame under which the double cross products of \cite{Maji90} and 2-cocycle extensions may be collected. More precisely, given a cocycle double cross product $G = M\bowtie_\g H$, if the twisted cocycle $\gamma: M\times M \to H$ is trivial, then $M\subseteq G$ becomes a subgroup, and $G$ becomes the double cross product $M\bowtie H$ of $M$ and $H$ in the sense of \cite{Maji90}, see also \cite{Majid-book}. If, on the other extreme, $H$ becomes abelian, along with a trivial action on $M$, then $M$ happens to be a Lie group, $\s:H\times M \to H$ becomes a group action, and $\g:M\times M\to H$ turns out to be a usual 2-cocycle in the (Lie group) cohomology of $M$ with coefficients in $H$. In this case, $G$ is said to be the 2-cocycle extension of $M$ by $H$, which is denoted by $M \ltimes_\g H$.
\end{remark}

\subsection{Cocycle double cross sum Lie algebras}\label{subsect-cocycle-double-cross-sum-Lie-alg}~

Let us next recall the tangent spaces, at identity, of cocycle double cross product Lie groups. These tangent spaces have been introduced in \cite{AgorMili14}, and they were called the \emph{unified product Lie algebras}, see also \cite{AgoreMilitaru-book}. We shall, however, prefer the phrase \emph{cocycle double cross sum Lie algebra} along the lines of \cite{EsenGuhaSutl22} as we did for the Lie groups.

Let $\G{g}$ be a Lie algebra, and  $\G{h} \subseteq \G{g}$ be a Lie subalgebra. Let also  $\G{m}$ be the vector space complement of $\G{h}$ in $\G{g}$, which is not assumed a priori a Lie subalgebra. Let, further, $\G{m}$ has the structure of a left $\G{h}$-module through
\begin{equation}\label{left-action}
\rt: \G{h}\ot \G{m}\to \G{m}, \qquad \eta\ot v\mapsto \eta\rt v,
\end{equation}
and the pair $(\G{m},\G{h})$ is equipped with linear maps
\begin{align}
&\phi :\G{m}\ot \G{m} \to \G{m}, \qquad v_1\ot v_2\mapsto \phi(v_1\ot v_2) =:\phi(v_1,v_2) =: [[ v_1,v_2]] \label{phi-map} \\
&\t :\G{m}\ot \G{m} \to \G{h}, \qquad v_1\ot v_2\mapsto \t(v_1\ot v_2) =:\t(v_1,v_2),  \label{theta-map} \\
&\psi: \G{h}\ot \G{m}\to \G{h}, \qquad \psi(\eta\ot v) =:\psi(\eta,v). \label{right-action}
\end{align}
Then it was showed in \cite[Thm. 3.2]{AgorMili14}, see also \cite[Prop. 2.1]{EsenGuhaSutl22}, that $\G{m}\bowtie_\t\G{h}:=\G{m}\oplus \G{h}$ is a Lie algebra through
\begin{equation}\label{cocycle-double-cross-sum-bracket}
[(v_1, \eta_1), (v_2, \eta_2)] = \Big([[ v_1,v_2]] + \eta_1\rt v_2 - \eta_2\rt v_1,  [\eta_1,\eta_2] + \psi(\eta_1 ,v_2) - \psi(\eta_2, v_1) +\t(v_1,v_2) \Big),
\end{equation}
if and only if 
\begin{align}
& [[ v,v ]] = 0, \qquad \qquad \t(v,v) = 0, \label{phi-xi-xi-theta-xi-xi}\\
& \eta\rt [[v_1,v_2]] = [[\eta\rt v_1, v_2]] + [[v_1,\eta\rt v_2]] + \psi(\eta,v_1)\rt v_2 - \psi(\eta, v_2)\rt v_1, \label{h-on-phi} \\
& [\eta,\t(v_1,v_2)]  = \t(\eta\rt v_1, v_2) + \t(v_1, \eta\rt v_2) + \psi(\psi(\eta,v_1), v_2) - \psi(\psi(\eta,v_2)), v_1)  - \psi(\eta, [[v_1,v_2]]), \label{h-on-theta-and-psi-being-action} \\
&\psi([\eta_1,\eta_2] ,v)=  [\eta_1,\psi(\eta_2,v)]+[\psi(\eta_1,v),\eta_2] + \psi(\eta_1,\eta_2\rt v) - \psi(\eta_2,\eta_1\rt v), \label{h-bracket-psi-comp} \\
& \circlearrowright  [[[[v_1,v_2]],v_3]] + \circlearrowright \t(v_1,v_2)\rt v_3 = 0, \label{phi-Jacobi-and-theta-action} \\
& \circlearrowright \psi(\t(v_1,v_2),v_3) + \circlearrowright \t([[v_1,v_2]],v_3) = 0, \label{cocycle-condition-for-theta}
\end{align}
are satisfied for any $v,v_1,v_2,v_3\in \G{m}$ and any $\eta,\eta_1,\eta_2 \in \G{h}$, where $\circlearrowright$ denotes summation over the cyclic permutations of $(v_1,v_2,v_3)$.

Following \cite{EsenGuhaSutl22}, we shall call the Lie algebra $\G{m}\bowtie_\t\G{h}:= \G{m}\oplus\G{h}$ the \emph{cocycle double cross sum} of $\G{m}$ and $\G{h}$. 

As was pointed out in \cite[Thm. 3.4]{AgorMili14}, and also \cite[Thm. 3.1.4]{AgoreMilitaru-book}, the Lie algebra $\G{m}\bowtie_\t\G{h}:= \G{m}\oplus\G{h}$ is completely determined by $\G{h}$ and its (vector space) complement $\G{m}$. More precisely, we have the following.

\begin{proposition}\label{prop-cocycle-double-sum-universal}
Let $\G{g}$ be a Lie algebra, $\G{h} \subseteq \G{g}$ a subalgebra, and let $\G{m}$ be the vector space complement of $\G{h}$ in $\G{g}$, that is, $\G{g}\cong \G{m}\oplus \G{h}$ as vector spaces. Then $\G{g}$ is a double cross sum of the pair $(\G{m}, \G{h})$. In this case, the linear maps \eqref{phi-map}, \eqref{theta-map}, \eqref{left-action}, and \eqref{right-action} may be obtained through
\[
[\eta,v] = \eta\rt v + \psi(\eta , v), \qquad [v_1,v_2] = [[ v_1,v_2]] + \t(v_1,v_2),
\]
for any $v,v_1,v_2\in \G{m}$, and any $\eta \in \G{h}$.
\end{proposition}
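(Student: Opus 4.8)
This is the infinitesimal counterpart of Proposition~\ref{prop-universal-gr}, and the plan is to prove it by the same mechanism. The decomposition $\G{g}\cong\G{m}\oplus\G{h}$ supplies projections $p_{\G{m}}\colon\G{g}\to\G{m}$ and $p_{\G{h}}\colon\G{g}\to\G{h}$, and I would simply \emph{define} the structure maps by post-composing the Lie bracket of $\G{g}$ with these projections: for $\eta\in\G{h}$ and $v,v_1,v_2\in\G{m}$,
\begin{align*}
\eta\rt v &:= p_{\G{m}}[\eta,v], & \psi(\eta,v) &:= p_{\G{h}}[\eta,v], \\
[[v_1,v_2]] &:= p_{\G{m}}[v_1,v_2], & \t(v_1,v_2) &:= p_{\G{h}}[v_1,v_2].
\end{align*}
By construction $[\eta,v]=\eta\rt v+\psi(\eta,v)$ and $[v_1,v_2]=[[v_1,v_2]]+\t(v_1,v_2)$, which are exactly the reconstruction formulas asserted in the statement; moreover bilinearity of all four maps is inherited from the bracket, and $[[v,v]]=0=\t(v,v)$ follow from its antisymmetry, so \eqref{phi-xi-xi-theta-xi-xi} holds for free.

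The core of the argument is that the left $\G{h}$-module axiom for $\rt$ and each of the conditions \eqref{h-on-phi}--\eqref{cocycle-condition-for-theta} is precisely one homogeneous component of the Jacobi identity of $\G{g}$ evaluated on a triple of a suitable type. Here the hypothesis that $\G{h}$ is a \emph{subalgebra} is what makes everything consistent: it gives $[\eta_1,\eta_2]\in\G{h}$, hence $p_{\G{m}}[\eta_1,\eta_2]=0$, which is needed already to get the module law to close. Concretely, I would feed the Jacobi identity the triple $(\eta_1,\eta_2,v)$, rewrite each mixed bracket via the definitions above, and read off: the $\G{m}$-component is $[\eta_1,\eta_2]\rt v=\eta_1\rt(\eta_2\rt v)-\eta_2\rt(\eta_1\rt v)$ (the module axiom), and the $\G{h}$-component is \eqref{h-bracket-psi-comp}. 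Likewise the triple $(\eta,v_1,v_2)$ yields \eqref{h-on-phi} on its $\G{m}$-part and \eqref{h-on-theta-and-psi-being-action} on its $\G{h}$-part, and the triple $(v_1,v_2,v_3)$ yields \eqref{phi-Jacobi-and-theta-action} and \eqref{cocycle-condition-for-theta} (the cyclic sums $\circlearrowright$ being exactly the three terms of the Jacobi identity). The all-$\G{h}$ triple $(\eta_1,\eta_2,\eta_3)$ contributes nothing beyond the Jacobi identity already valid in $\G{h}$.

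Once \eqref{phi-xi-xi-theta-xi-xi}--\eqref{cocycle-condition-for-theta} are in place, the result recalled above (\cite[Thm.~3.2]{AgorMili14}, \cite[Prop.~2.1]{EsenGuhaSutl22}) tells us that $\G{m}\bowtie_\t\G{h}=\G{m}\oplus\G{h}$ is a Lie algebra under \eqref{cocycle-double-cross-sum-bracket}. It then remains to check that the linear isomorphism $\G{m}\oplus\G{h}\to\G{g}$, $(v,\eta)\mapsto v+\eta$, carries \eqref{cocycle-double-cross-sum-bracket} to the bracket of $\G{g}$; this is a one-line expansion. Writing the $\G{g}$-side as $[v_1+\eta_1,\,v_2+\eta_2]=[v_1,v_2]+[v_1,\eta_2]+[\eta_1,v_2]+[\eta_1,\eta_2]$ and substituting $[v_1,v_2]=[[v_1,v_2]]+\t(v_1,v_2)$, $[\eta_1,v_2]=\eta_1\rt v_2+\psi(\eta_1,v_2)$, $[v_1,\eta_2]=-\eta_2\rt v_1-\psi(\eta_2,v_1)$, and then separating $\G{m}$- and $\G{h}$-components, reproduces the right-hand side of \eqref{cocycle-double-cross-sum-bracket} verbatim. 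This shows $\G{g}$ is the cocycle double cross sum of $(\G{m},\G{h})$.

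I do not expect a genuine obstacle: the proof is entirely bookkeeping with the two projections. The only steps that call for care are (i) invoking $\G{h}\subseteq\G{g}$ being a subalgebra at the point where the $\G{h}$-module axiom is extracted, and (ii) the signs when a bracket of the form $[v,\eta]$ is replaced by $-[\eta,v]$ prior to projecting. It is worth stressing that $\G{m}$ is in general \emph{not} a subalgebra, which is exactly why both $\phi$ and the twisted cocycle $\t$ are needed to accommodate $[v_1,v_2]$.
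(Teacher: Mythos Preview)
Your proposal is correct and complete: defining the four structure maps by projecting the ambient bracket onto $\G{m}$ and $\G{h}$, extracting each compatibility condition \eqref{phi-xi-xi-theta-xi-xi}--\eqref{cocycle-condition-for-theta} as the appropriate component of the Jacobi identity on triples of mixed type, and then checking that $(v,\eta)\mapsto v+\eta$ intertwines \eqref{cocycle-double-cross-sum-bracket} with the bracket of $\G{g}$, is precisely the standard argument.

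The paper itself does not supply a proof of this proposition; it states the result and attributes it to \cite[Thm.~3.4]{AgorMili14} and \cite[Thm.~3.1.4]{AgoreMilitaru-book}. Your write-up is exactly the argument one finds in those references, so there is nothing to contrast: you have reconstructed the intended proof in full, including the two points you flag (the subalgebra hypothesis on $\G{h}$ being needed for the module law, and the sign when passing from $[v,\eta]$ to $-[\eta,v]$).
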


\begin{remark}
In accordance with the Lie groups, both double cross sum Lie algebras of \cite{Maji90,Majid-book} and 2-cocycle extensions are particular instances of cocycle double cross sums (unified products). Explicitly, given $\G{g} = \G{m}\bowtie_\t \G{h}$, if the twisted cocycle $\t:\G{m}\otimes \G{m} \to \G{h}$ is trivial, then $\G{g} = \G{m}\bowtie \G{h}$, that is a double cross sum Lie algebra. If, on the other hand, $\G{h}$ is abelian and the $\G{h}$-action on $\G{m}$ is trivial, then $\G{m} \subseteq \G{g}$ becomes a Lie algebra (though not a Lie subalgebra) and $\t:\G{m}\otimes \G{m} \to \G{h}$ becomes a 2-cocycle in the Lie algebra cohomology of $\G{m}$, with coefficients in $\G{h}$. In this case the notation $\G{g} = \G{m}\ltimes_\t \G{h}$ is adopted.
\end{remark}

\section{Lagrangian Dynamics on Cocycle Double Cross Product Lie groups}

\subsection{The tangent group of a cocycle double cross product}~

We shall now present the cocycle double cross product structure of the tangent group $TG$ of a cocycle double cross product Lie group $G:=M\bowtie_\g H$, in view of Proposition \ref{prop-universal-gr}. To this end, we shall make use of the (left) trivialization $tr_{TG}^L:TG\to G\ltimes \G{g}$, where $\G{g}$ stands for the Lie algebra of $G$. Accordingly, let us begin with the following observation on the Lie algebra of a cocycle double cross product group.

\begin{remark}
Given a cocycle double cross product group $G:=M\bowtie_\g H$, let $\G{h}$ be the Lie algebra of $H$, and $1 \in H$ denotes the identity element. Let also $o$ be the distinguished element of $M$. It, then, follows at once that the tangent space $T_{(o,1)}G = \G{g}$ may be identified with $T_oM \oplus \G{h}$, as vector spaces. On the other hand, the vector space $T_oM \oplus \G{h}$ has the structure of a Lie algebra, which admits $\G{h}$ as a subalgebra. As such, it follows from Proposition \ref{prop-cocycle-double-sum-universal} that $T_oM \oplus \G{h}$ is a double cross sum Lie algebra for some (twisted cocycle) $\t:T_oM \ot T_oM \to \G{h}$. We shall therefore present the Lie algebra of $M\bowtie_\g H$ as $T_oM \bowtie_{\t} \G{h}$.
\end{remark}

\begin{proposition}\label{prop-Ad-cocycle-cross}
The adjoint action of a cocycle double cross product $M\bowtie_\g H$ on its Lie algebra $T_oM \bowtie_{\t} \G{h}$ is given by
\begin{align}\label{Ad-cocycle-double-cross-prod}
\begin{split}
&\Ad_{(m,h)^{-1}}(v,\eta) = \Big(h^{-1}\rt T\G{L}_{m^r} \big[\g(m,m^r)^{-1} \rt T\G{R}_m v+TL_{\g(m,m^r)^{-1}}(\eta)\rt m)\big], \\
& TR_{\g(m^r, \g(m,m^r)^{-1}\rt  m) \ast \s(\g(m,m^r)^{-1}, m) \ast h}\big(\s(h^{-1}, m^r\cdot[\g(m,m^r)^{-1}\rt T\G{R}_m v])+\\
& \hspace{8cm}\s(h^{-1}, m^r\cdot[TL_{\g(m,m^r)^{-1}}(\eta) \rt m])\big) + \\
&TL_{\s(h^{-1}, m^r\cdot[\g(m,m^r)^{-1}\rt m])}TR_{\s(\g(m,m^r)^{-1}, m) \ast h} \big[\g(m^r, \g(m,m^r)^{-1}\rt T\G{R}_m v) +\\
& \hspace{9cm} \g(m^r, TL_{\g(m,m^r)^{-1}}(\eta)\rt m) + \\
& TL_{\g(m^r, \g(m,m^r)^{-1}\rt m)} \big(\g(\g(m,m^r)^{-1}\rt v, \g(m,m^r)^{-1}\rt m)\big)\big] +\\
& TL_{\s(h^{-1}, m^r\cdot[\g(m,m^r)^{-1}\rt m])\ast \g(m^r, \g(m,m^r)^{-1}\rt m)}TR_h\big[\s(\s(\g(m,m^r)^{-1},v), m)+\s(TL_{\g(m,m^r)^{-1}}(\eta), m)\big]\Big)
\end{split}
\end{align}
for any $(m,h)\in M\bowtie_\g H$, and any $(v,\eta) \in T_oM \bowtie_{\t} \G{h}$, where for any $m\in M$
\begin{align*}
& \G{L}_m:M \to M, \qquad n\mapsto m\cdot n,\\
& \G{R}_m:M \to M, \qquad n\mapsto n\cdot m.
\end{align*}
\end{proposition}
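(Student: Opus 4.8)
The plan is to compute the left-trivialized adjoint action $\Ad_{(m,h)^{-1}}$ directly from the defining multiplication \eqref{cocycle-double-cross-prod-multp} and the inversion formula \eqref{inverse-cdcp}. Under the left trivialization $tr_{TG}^L:TG\to G\ltimes\G{g}$, and the identification $\G{g}=T_{(o,1)}G\cong T_oM\oplus\G{h}$ recorded in the Remark above, the trivialized map $\Ad_{(m,h)^{-1}}$ is precisely the differential at the identity of the conjugation $c\mapsto(m,h)^{-1}c(m,h)$: for any smooth curve $t\mapsto c(t)=(m(t),k(t))$ in $M\bowtie_\g H$ with $c(0)=(o,1)$ and $\dot c(0)=(v,\eta)$,
\[
\Ad_{(m,h)^{-1}}(v,\eta)=\frac{d}{dt}\Big|_{t=0}(m,h)^{-1}\,(m(t),k(t))\,(m,h),
\]
and the right-hand side lies in $T_{(o,1)}G=T_oM\oplus\G{h}$, since the base point $(o,1)$ is fixed by the conjugation. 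Using linearity of $\Ad_{(m,h)^{-1}}$ one treats the $v$-part of the velocity (taking $k(t)\equiv1$ and $m(t)$ a curve in $M$ with $\dot m(0)=v$) and the $\eta$-part (taking $m(t)\equiv o$ and $k(t)=\exp(t\eta)$) separately and adds the outcomes; this is mirrored by the additive $v$- and $\eta$-summands inside each bracket of \eqref{Ad-cocycle-double-cross-prod}.

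The computation itself is the mechanical task of performing the two products in \eqref{cocycle-double-cross-prod-multp}, first $(m,h)^{-1}(m(t),k(t))$ with $(m,h)^{-1}$ substituted from \eqref{inverse-cdcp}, then the result times $(m,h)$ on the right, and differentiating at $t=0$ by the Leibniz and chain rules. To evaluate all the base points at $t=0$ one needs only the normalizations \eqref{normalization-gr}, the unit axioms \eqref{phi-map-gr-e} and \eqref{theta-map-gr-e}, and $h\rt o=o$; these also kill the contribution of every factor that is constant in $t$, so that what survives are exactly the derivatives of $\phi$ (that is, the product $\cdot$), of $\g$ and of $\s$ in their several slots, together with the two $\rt$-derivatives of the $H$-action on $M$. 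By definition these derivatives are the tangent maps $T\G{L}_{(\cdot)}$, $T\G{R}_{(\cdot)}$, the maps $\g(-,-)$ and $\s(-,-)$ evaluated on a tangent vector, and the two $\rt$-maps, respectively, glued together by the left and right translations $TL$, $TR$ of $H$ that arise from differentiating the multiplication of $H$. It is convenient to record the factorisation $(m,h)=(m,1)(o,h)$, equivalently $(m,h)^{-1}=(o,h^{-1})(m^r,\g(m,m^r)^{-1})$, since the conjugation by $(o,h)$ is short and contributes the outermost $h^{-1}\rt$ in the first component of \eqref{Ad-cocycle-double-cross-prod} together with the outermost $TR_{(\cdots)\ast h}$ in the second, leaving only the conjugation by $(m,1)^{-1}$ to be analysed.

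It then remains to rearrange the raw output of the differentiation into the stated closed form, which is where the cocycle matched pair axioms \eqref{left-action-gr-on-multp}--\eqref{theta-map-gr-cocycle} come in: the compatibility \eqref{left-action-gr-on-multp} of the $H$-action with $\cdot$ lets one pull an $h^{-1}\rt$ through an $M$-product and consolidate a single $T\G{L}_{m^r}$, the relation \eqref{right-action-gr-on-multp} does the analogous job for $\s$, and the associativity \eqref{phi-map-gr-assoc} together with the mixed compatibilities \eqref{right-action-theta-comp-gr} and \eqref{theta-map-gr-cocycle} recombine the various $\g$- and $\s$-terms of the $H$-component. Throughout, one repeatedly invokes $m\cdot m^r=o$ and its consequence $m^r\cdot(\g(m,m^r)^{-1}\rt m)=o$ (which follows from \eqref{phi-map-gr-assoc} and \eqref{phi-map-gr-e} by cancellation in $G$) to pin down the base points of the intermediate vectors. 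I expect the only genuine difficulty to be the bookkeeping: the conjugate $(m,h)^{-1}(m(t),k(t))(m,h)$ unwinds into a deeply nested expression whose $H$-component alone is a product of five $H$-valued factors, each itself built out of $\g$, $\s$ and the multiplication of $H$, so that differentiating it cleanly and then matching the result term-by-term against \eqref{Ad-cocycle-double-cross-prod}, while tracking accurately the tangent space in which each intermediate vector lives, demands care; there is, however, no conceptual obstruction once the normalisation and matched-pair identities have been organised in advance.
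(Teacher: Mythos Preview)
Your proposal is correct and follows essentially the same route as the paper: differentiate the conjugation $(m,h)^{-1}c(t)(m,h)$ at $t=0$ using the multiplication \eqref{cocycle-double-cross-prod-multp} and the inverse \eqref{inverse-cdcp}, then simplify with the matched-pair axioms. The only tactical difference is that the paper performs the right product $(v_t,\eta_t)(m,h)$ first and then multiplies on the left by $(m,h)^{-1}$, rather than your suggested left-then-right order or the $(m,1)(o,h)$ factorisation; this keeps the curve-dependent factor simpler at the first step and lets \eqref{left-action-gr-on-multp} and \eqref{right-action-theta-comp-gr} do most of the consolidation directly, but the bookkeeping burden is comparable either way.
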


\begin{proof}
In view of \eqref{inverse-cdcp}, given a path $(v_t, \eta_t) \in M\bowtie_\g H$ with $v_0 = o \in M, \, v'_0 = v \in T_oM$, and $\eta_0 = 1 \in H,\, \eta'_0=\eta\in \G{h}$, we have
\begin{align}\label{Ad-group}
\begin{split}
&\Ad_{(m,h)^{-1}}(v_t,\eta_t) = (h^{-1}\rt m^r, \s(h^{-1}, m^r)\ast \g(m,m^r)^{-1})(v_t,\eta_t)(m,h) = \\
& \Big(h^{-1}\rt m^r, \s(h^{-1}, m^r)\ast \g(m,m^r)^{-1}\Big)\Big(v_t \cdot (\eta_t\rt m),\g(v_t, \eta_t \rt m)\ast \s(\eta_t, m)\ast h\Big) = \\
& \Big((h^{-1}\rt m^r)\cdot (A\rt [v_t \cdot (\eta_t\rt m)]), \\
& \hspace{2cm} \g(h^{-1}\rt m^r, A\rt [v_t \cdot (\eta_t\rt m)])\ast \s(A, v_t \cdot (\eta_t\rt m)) \ast \g(v_t, \eta_t \rt m)\ast \s(\eta_t, m)\ast h\Big)
\end{split}
\end{align}
where $A:=\s(h^{-1}, m^r)\ast \g(m,m^r)^{-1} \in H$. As for the derivative at $t=0$, we first note that
\begin{align*}
& \dt\, (h^{-1}\rt m^r)\cdot \Big(A\rt [v_t \cdot (\eta_t\rt m)]\Big)  =  \dt\, h^{-1}\rt \Big(m^r \cdot [\g(m,m^r)^{-1} \rt (v_t\cdot (\eta_t\rt m))]\Big) = \\
& \dt\, h^{-1}\rt \Big(m^r \cdot [\g(m,m^r)^{-1} \rt (v_t\cdot m)]\Big) + \dt\, h^{-1}\rt \Big(m^r \cdot [\g(m,m^r)^{-1} \ast\eta_t\rt m]\Big) = \\
& h^{-1}\rt T\G{L}_{m^r} \Big(\g(m,m^r)^{-1} \rt (T\G{R}_m v+\eta\rt m)\Big) = h^{-1}\rt T\G{L}_{m^r} \Big(\g(m,m^r)^{-1} \rt T\G{R}_m v+TL_{\g(m,m^r)^{-1}}(\eta)\rt m)\Big),
\end{align*}
where on the first equality we used \eqref{left-action-gr-on-multp}. Next,
\begin{align*}
& \dt\,\g(h^{-1}\rt m^r, A\rt [v_t \cdot (\eta_t\rt m)])\ast \s(A, v_t \cdot (\eta_t\rt m)) \ast \g(v_t, \eta_t \rt m)\ast \s(\eta_t, m)\ast h = \\
&  \dt\, \s(h^{-1}, m^r\cdot[\g(m,m^r)^{-1}\rt (v_t\cdot(\eta_t\rt m))])\ast \g(m^r, \g(m,m^r)^{-1}\rt (v_t\cdot(\eta_t\rt m))) \ast \\
&\hspace{4cm} \s(\g(m,m^r)^{-1}, v_t\cdot(\eta_t\rt m)) \ast \g(v_t, \eta_t \rt m)\ast \s(\eta_t, m)\ast h = \\
&  \dt\, \s(h^{-1}, m^r\cdot[\g(m,m^r)^{-1}\rt (v_t\cdot(\eta_t\rt m))])\ast \g(m^r, \g(m,m^r)^{-1}\rt (v_t\cdot(\eta_t\rt m))) \ast \\
&\hspace{2cm} \g(\g(m,m^r)^{-1}\rt v_t, \s(\g(m,m^r)^{-1},v_t)\ast\eta_t\rt m) \ast \s(\s(\g(m,m^r)^{-1},v_t)\ast\eta_t, m)\ast h = \\
& \s(h^{-1}, m^r\cdot[\g(m,m^r)^{-1}\rt (v\cdot m)])\ast \g(m^r, \g(m,m^r)^{-1}\rt  m) \ast \s(\g(m,m^r)^{-1}, m) \ast h + \\
& \s(h^{-1}, m^r\cdot[\g(m,m^r)^{-1}\ast\eta \rt m])\ast \g(m^r, \g(m,m^r)^{-1}\rt m) \ast \s(\g(m,m^r)^{-1}, m) \ast h + \\
& \s(h^{-1}, m^r\cdot[\g(m,m^r)^{-1}\rt m])\ast \g(m^r, \g(m,m^r)^{-1}\rt (v\cdot m)) \ast \s(\g(m,m^r)^{-1}, m) \ast h +\\
& \s(h^{-1}, m^r\cdot[\g(m,m^r)^{-1}\rt m])\ast \g(m^r, \g(m,m^r)^{-1}\ast\eta\rt m) \ast \s(\g(m,m^r)^{-1}, m) \ast h + \\
& \s(h^{-1}, m^r\cdot[\g(m,m^r)^{-1}\rt m])\ast \g(m^r, \g(m,m^r)^{-1}\rt m) \ast \\
&\hspace{5cm} \g(\g(m,m^r)^{-1}\rt v, \g(m,m^r)^{-1}\rt m) \ast \s(\g(m,m^r)^{-1}, m) \ast h +\\
&\s(h^{-1}, m^r\cdot[\g(m,m^r)^{-1}\rt m])\ast \g(m^r, \g(m,m^r)^{-1}\rt m) \ast \s(\s(\g(m,m^r)^{-1},v), m) \ast h + \\
& \s(h^{-1}, m^r\cdot[\g(m,m^r)^{-1}\rt m])\ast \g(m^r, \g(m,m^r)^{-1}\rt m) \ast \s(\g(m,m^r)^{-1}\ast \eta, m) \ast h
\end{align*}
where we have used \eqref{right-action-gr-on-multp} and \eqref{right-action-theta-comp-gr} on the first equality, and \eqref{right-action-theta-comp-gr} on the second equality. The result thus follows.
\end{proof}

\begin{remark}
We note that in case \eqref{theta-map-gr} to be trivial, the adjoint action \eqref{Ad-cocycle-double-cross-prod} coincides with that of \cite[(2.14)]{EsenSutl17}.
\end{remark}

\begin{remark}
Let us note also that the derivative of \eqref{Ad-cocycle-double-cross-prod} yields
\begin{equation}\label{derv-of-gamma}
\t(v,w) = \dt\,\ds\, \g(v_t,m_s) - \dt\,\ds\, \g(m_s,v_t) \in \G{h},
\end{equation}
for the Lie algebra twisted cocycle $\t:T_oM\otimes T_oM \to \G{h}$. Indeed, it suffices to differentiate \eqref{Ad-group} on two curves $(m_s,1),(v_t,1) \in M\bowtie_\g H$, with $m_0 = o \in M$ and $m'_0 = w\in T_oM$, and, $v_0 = o \in M$ and $v'_0 = v\in T_oM$.
\end{remark}

\begin{proposition}
The tangent group $T(M\bowtie_\g H)$ of the cocycle double cross product $M\bowtie_\g H$ has the structure of a cocycle double cross product. More precisely,
\[
T(M\bowtie_\g H) \cong (M\times T_oM) \bowtie_\Lambda TH
\]
as Lie groups, where $\Lambda:(M\times T_oM)\times (M\times T_oM) \to TH$ is given by
\begin{align}\label{Lambda-on-TMxTM}
\begin{split}
&\Lambda\big((m_1,v_1), (m_2,v_2)\big) = \\
& \Big(\g(m_1,m_2), TR_{\s(\g(m_2,m_2^r)^{-1}, m_2)}\big(\g(m_2^r,\g(m_2,m_2^r)^{-1}\rt T\G{R}_{m_2}(v_1))+ \g(m_2^r,\g(m_2,m_2^r)^{-1} \rt m_2)\ast \\
& \hspace{10cm} \g(\g(m_2,m_2^r)^{-1}\rt v_1,\g(m_2,m_2^r)^{-1}\rt m_2)\big) + \\
&TL_{\g(m_2^r,\g(m_2,m_2^r)^{-1}\rt m_2)}(\s(\s(\g(m_2,m_2^r)^{-1},v_1),m_2))\Big)
\end{split}
\end{align}
for any $(m_1,v_1), (m_2,v_2) \in M \times T_oM$.
\end{proposition}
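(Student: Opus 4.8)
The plan is to deduce the statement from the universal property of cocycle double cross product Lie groups, Proposition~\ref{prop-universal-gr}, after passing to the left trivialization. Recall that $tr_{TG}^L:TG\to G\ltimes\G{g}$ identifies $X_g\in T_gG$ with $\big(g,TL_{g^{-1}}X_g\big)$ and carries the group law to $(g_1,\xi_1)(g_2,\xi_2)=\big(g_1g_2,\Ad_{g_2^{-1}}\xi_1+\xi_2\big)$. Since $G=M\bowtie_\g H$ is $M\times H$ as a manifold and $\G{g}=T_oM\bowtie_\t\G{h}$ is $T_oM\oplus\G{h}$ as a vector space, $tr_{TG}^L$ exhibits $T(M\bowtie_\g H)$, as a manifold, as $M\times H\times T_oM\times\G{h}$. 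Regrouping the factors as $(M\times T_oM)\times(H\times\G{h})$ and identifying $H\times\G{h}$ with $TH$ through the left trivialization of $H$, we feed into Proposition~\ref{prop-universal-gr} the embedding of the complement $i:M\times T_oM\hookrightarrow TG$, $(m,v)\mapsto\big((m,1),(v,0)\big)$, and the inclusion of the subgroup $j:TH\hookrightarrow TG$, $(h,\eta)\mapsto\big((o,h),(0,\eta)\big)$.

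One first checks that $j$ is a homomorphic embedding onto a subgroup. Indeed $h\mapsto(o,h)$ is a Lie group monomorphism $H\to G$, since by \eqref{cocycle-double-cross-prod-multp} and the normalizations $1\rt o=o$, $o\cdot o=o$, $\g(o,o)=1$, $\s(1,o)=1$ coming from \eqref{phi-map-gr-e} and \eqref{normalization-gr} one has $(o,h_1)(o,h_2)=(o,h_1h_2)$; applying the tangent functor and unwinding the left trivializations of $H$ and of $G$ yields $j$. One next verifies that $\mu_{TG}\circ(i\times j):(M\times T_oM)\times TH\to TG$ is a diffeomorphism. Computing the product in $G\ltimes\G{g}$ --- using \eqref{cocycle-double-cross-prod-multp} for the $G$-coordinate, and Proposition~\ref{prop-Ad-cocycle-cross} at the group element $(o,h)$, where $m^r=o$, $\g(o,o)=1$ and $T\G{L}_o=T\G{R}_o=\Id$ so that $\Ad_{(o,h)^{-1}}(v,0)=\big(h^{-1}\rt v,\;TR_h\,\s(h^{-1},v)\big)$, for the $\G{g}$-coordinate --- one obtains, in the coordinates $M\times H\times T_oM\times\G{h}$,
\[
\big(\mu_{TG}\circ(i\times j)\big)(m,v,h,\eta)=\big(m,\;h,\;h^{-1}\rt v,\;TR_h\,\s(h^{-1},v)+\eta\big).
\]
For fixed $(m,h)$ the assignment $(v,\eta)\mapsto\big(h^{-1}\rt v,\;TR_h\,\s(h^{-1},v)+\eta\big)$ is a linear automorphism of $T_oM\oplus\G{h}$: its first component $v\mapsto h^{-1}\rt v$ is the linear automorphism $h^{-1}\rt(\cdot)$ of $T_oM$, and its $\G{h}$-component differs from $\eta$ by a linear function of $v$; it varies smoothly with $(m,h)$, so the whole map is a diffeomorphism. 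By Proposition~\ref{prop-universal-gr}, $T(M\bowtie_\g H)$ is therefore a cocycle double cross product of the pair $(M\times T_oM,\,TH)$.

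It remains to identify the twisted cocycle, which by Proposition~\ref{prop-universal-gr} is read off from $i(m_1,v_1)\,i(m_2,v_2)=i\big((m_1,v_1)\cdot(m_2,v_2)\big)\,j\big(\Lambda((m_1,v_1),(m_2,v_2))\big)$. In $G\ltimes\G{g}$ the left-hand side is $\big((m_1,1)(m_2,1),\,\Ad_{(m_2,1)^{-1}}(v_1,0)+(v_2,0)\big)$, and \eqref{cocycle-double-cross-prod-multp} gives $(m_1,1)(m_2,1)=\big(m_1\cdot m_2,\,\g(m_1,m_2)\big)$, which already produces the $H$-component $\g(m_1,m_2)$ of $\Lambda$; the $\G{h}$-component is obtained from $[\Ad_{(m_2,1)^{-1}}(v_1,0)]_{\G{h}}$ once the shift in the diffeomorphism above is undone. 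To evaluate $\Ad_{(m_2,1)^{-1}}(v_1,0)$ one specialises \eqref{Ad-cocycle-double-cross-prod} to $h=1$, $\eta=0$; the reduction is organised by the relations $(m_2^r)^r=\g(m_2,m_2^r)^{-1}\rt m_2$, $m_2^r\cdot(m_2^r)^r=o$ and $\g\big(m_2^r,\g(m_2,m_2^r)^{-1}\rt m_2\big)=\s\big(\g(m_2,m_2^r)^{-1},m_2\big)^{-1}$, all of which follow by feeding $(m_2,1)$ and $(m_2^r,\g(m_2,m_2^r)^{-1})$ into \eqref{inverse-cdcp}, together with the normalizations \eqref{normalization-gr}, \eqref{theta-map-gr-e}, \eqref{phi-map-gr-e} and the cocycle-matched-pair compatibilities \eqref{left-action-gr-on-multp}--\eqref{theta-map-gr-cocycle}. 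Collapsing the resulting telescoping $\s$- and $\g$-terms brings $\Lambda$ into the form \eqref{Lambda-on-TMxTM}.

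The main obstacle is precisely this last simplification: the adjoint formula \eqref{Ad-cocycle-double-cross-prod} is unwieldy, and reducing its $h=1$ specialisation to the closed form \eqref{Lambda-on-TMxTM} demands a disciplined, repeated application of the cocycle-matched-pair axioms to cancel the intermediate $\s$- and $\g$-contributions; the bookkeeping, though routine, is where all the work lies. A built-in consistency check is that $\Lambda((m_1,v_1),(m_2,v_2))$ must come out depending on $m_1$ only through $\g(m_1,m_2)$ and not on $v_2$ at all, reflecting that in the defining relation $n_1n_2=(n_1\cdot n_2)\,\Lambda(n_1,n_2)$ the cocycle records only the failure of the product of two complement-type elements to remain in the complement, while $v_2$ contributes purely to the $\cdot$-operation of the new cocycle double cross product.
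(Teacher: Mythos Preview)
Your approach is essentially the paper's own: left-trivialize, show that $j:TH\hookrightarrow T(M\bowtie_\g H)$ is a group embedding, invoke Proposition~\ref{prop-universal-gr}, and extract $\Lambda$ from $i(m_1,v_1)\,i(m_2,v_2)$ by specializing \eqref{Ad-cocycle-double-cross-prod} at $h=1$, $\eta=0$. The only differences are cosmetic: the paper omits your explicit verification that $\mu_{TG}\circ(i\times j)$ is a diffeomorphism (it simply asserts the manifold identification \eqref{TMH-trivialization}), and it reads off $\Lambda$ directly as the $(H,\G{h})$-component of the product under the plain coordinate swap, so the final reduction in the paper uses only the normalizations \eqref{normalization-gr} and \eqref{theta-map-gr-e} rather than the full list of cocycle-matched-pair identities you anticipate needing.
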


\begin{proof}
The (left) trivialization 
\begin{equation}\label{left-triv-TMH}
T(M\bowtie_\g H) \cong (M\bowtie_\g H) \ltimes (T_oM\bowtie_{\t} \G{h})
\end{equation}
of the tangent group $T(M\bowtie_\g H)$ enables us to set
\begin{equation}\label{TH-into-T(MxH)}
j:TH \cong H\ltimes \G{h} \to T(M\bowtie_\g H) \cong (M\bowtie_\g H) \ltimes (T_oM\bowtie_{\t} \G{h}), \qquad (h,\eta) \mapsto \Big((o,h), (0,\eta)\Big).
\end{equation}
We then observe for any $(h_1,\eta_1),(h_2,\eta_2) \in TH \cong H\ltimes \G{h}$ that
\begin{align*}
& j(h_1,\eta_1)j(h_2,\eta_2) = \Big((o,h_1), (0,\eta_1)\Big)\Big((o,h_2), (0,\eta_2)\Big) = \Big((o,h_1\ast h_2), \Ad_{(o,h_2)^{-1}}(0,\eta_1) + (0,\eta_2)\Big) = \\
&\Big((o,h_1\ast h_2), (0,\eta_2+\Ad_{{h_2}^{-1}}\eta_1)\Big) = j((h_1,\eta_1)(h_2,\eta_2)),
\end{align*}
where we used \eqref{Ad-cocycle-double-cross-prod} on the third equality. As such, the inclusion \eqref{TH-into-T(MxH)} being a group homomorphism $TH$ may be viewed as a subgroup of $T(M\bowtie_\g H)$. Moreover, in view of the trivialization \eqref{left-triv-TMH} we have
\begin{equation}\label{TMH-trivialization}
T(M\bowtie_\g H) \cong (M\times T_oM) \times TH
\end{equation}
as manifolds. We may thus conclude by Proposition \ref{prop-universal-gr} that $T(M\bowtie_\g H)$ has the structure of a cocycle double cross product Lie group for some $\Lambda:(M\times T_oM)\times (M\times T_oM) \to TH$.

Furthermore, keeping \eqref{Ad-cocycle-double-cross-prod} and \eqref{TMH-trivialization} in mind, given any $(m_1,v_1),(m_2,v_2) \in M \times T_oM$ we have on one hand
\begin{align*}
& \Big((m_1,1), (v_1,0)\Big)\Big((m_2,1), (v_2,0)\Big) = \Big((m_1\cdot m_2, \g(m_1,m_2)),(v_2,0)+\Ad_{(m_2,1)^{-1}}(v_1,0)\Big) = \\
& \Big((m_1\cdot m_2, \g(m_1,m_2)),(v_2 + T\G{L}_{m_2^r}\big(\g(m_2,m_2^r)^{-1}\rt T\G{R}_{m_2}(v_1)\big), \\
& TR_{\s(\g(m_2,m_2^r)^{-1}, m_2)}\big(\g(m_2^r,\g(m_2,m_2^r)^{-1}\rt (v_1\cdot m_2))+ \g(m_2^r,\g(m_2,m_2^r)^{-1} \rt m_2)\ast \\
&\hspace{8cm} \g(\g(m_2,m_2^r)^{-1}\rt v_1,\g(m_2,m_2^r)^{-1}\rt m_2)\big) +\\
& TL_{\g(m_2^r,\g(m_2,m_2^r)^{-1}\rt m_2)}(\s(\s(\g(m_2,m_2^r)^{-1},v_1),m_2)))\Big),
\end{align*}
while on the other hand
\begin{align*}
& \Big((m_1,1), (v_1,0)\Big)\Big((m_2,1), (v_2,0)\Big) = (m_1,v_1)(m_2,v_2) = \\
& \Big((m_1,v_1)\cdot (m_2,v_2), \Lambda\big((m_1,v_1), (m_2,v_2)\big)\Big) \in (M\times T_oM) \bowtie_\Lambda TH.
\end{align*}
Accordingly, we conclude
\begin{align*}
(m_1,v_1)\cdot (m_2,v_2) = \Big(m_1\cdot m_2, v_2 +T\G{L}_{m_2^r}\big(\g(m_2,m_2^r)^{-1}\rt T\G{R}_{m_2}(v_1)\big)\Big) \in M\times T_oM,
\end{align*}
as well as \eqref{Lambda-on-TMxTM}.
\end{proof}

\begin{remark}
Running a similar calculation, we may derive the left action $\btr:TH \times (M\times T_oM) \to (M\times T_oM)$ of $TH$ on $M\times T_oM$, and the analogue $\Sigma:TH \times (M\times T_oM) \to TH$ of the mapping \eqref{right-action-gr}.  This time we use \eqref{Ad-cocycle-double-cross-prod} with $(m,v) \in TM \cong M\times T_oM$ and $(h,\eta)\in TH\cong H\times \G{h}$ to calculate
\[
\Big((o,h),(0,\eta)\Big)\Big((m,1),(v,0)\Big) = \Big((h\rt m,\s(h,m)),(v,0) + \Ad_{(m,1)^{-1}}(0,\eta)\Big).
\]
As a result, we obtain
\begin{align*}
&\btr:TH \times (M\times T_oM) \to TM, \\
& (h,\eta) \btr (m,v) = \Big(h\rt m, v+T\G{L}_{m^r}\big(TL_{\g(m,m^r)^{-1}}(\eta) \rt m\big)\Big),
\end{align*}
and
\begin{align*}
&\Sigma:TH \times (M\times T_oM) \to TH, \\
& \Sigma\big((h,\eta), (m,v)\big) = \\
& \Big(\s(h,m), TR_{\s(\g(m,m^r)^{-1},m)}\g(m^r,TL_{\g(m,m^r)^{-1}} (\eta) \rt m) + TL_{\g(m^r,\g(m,m^r)^{-1}\rt m)}\s(TL_{\g(m,m^r)^{-1}} (\eta) ,m)\Big).
\end{align*}
\end{remark}

\subsection{Euler-Lagrange equations on a cocycle double cross product}~

We shall now present the Euler-Lagrange equations \eqref{EL-eqn} in the case $G=M\bowtie_\g  H$. To this end, we shall derive next the cotangent lift of the left translation over a cocycle double cross product. The coadjoint action of a cocycle double cross product Lie group on its Lie algebra will follow afterwards.  

\begin{proposition}
Given any $(m_1,h_1),(m_2,h_2)\in M\bowtie_\g  H$, any $\a:=\alpha_{m_1\cdot(h_1 \rt m_2)}\in T^{\ast}_{m_1\cdot (h_1 \rt m_2)} M$, and any $ \b:=\beta_{\gamma(m_1,h_1 \rt m_2)\ast\s(h_1,m_2)\ast h_2}\in T^{\ast}_{\gamma(m_1,h_1 \rt m_2)\ast\s (h_1,m_2)\ast h_2} H$, we have
\begin{align*}
& T^{\ast}_{(m_2,h_2)}L_{(m_1,h_1)}(\alpha,\beta)= \\
&\hspace{.7cm}\Big((T^\ast\G{L}_{m_1}\alpha)\overset{\ast}{\lt } h_1+(T^\ast\left(\gamma_{m_1} \circ R_{\s(h_1,m_2)\ast h_2}\right)  \beta)\overset{\ast}{\vartriangleleft} h_1+T^{\ast}\left(\s_{h_1}\circ L_{\gamma(m_1,h_1 \rt m_2)}\circ R_{h_2}\right)\beta, \\
&T^{\ast}L_{\gamma(m_1,h_1 \rt m_2)\ast\s(h_{1},m_2)}\beta \Big),
\end{align*}  
where
\begin{align*}
& \G{L}_m:M\to M, \qquad n \mapsto m\cdot n, \\
&\g_m: M\to H, \qquad n\mapsto \g(m,n), \\
& \s_h: M\to H, \qquad m\mapsto \s(h,m).
\end{align*}
\end{proposition}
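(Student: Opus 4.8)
The plan is to compute the left translation $L_{(m_1,h_1)}$ on $G = M\bowtie_\g H$ at the point $(m_2,h_2)$ directly from the multiplication formula \eqref{cocycle-double-cross-prod-multp}, take its tangent map, and then dualize. First I would write
\[
L_{(m_1,h_1)}(m_2,h_2) = \Big(m_1\cdot(h_1\rt m_2),\ \g(m_1,h_1\rt m_2)\ast\s(h_1,m_2)\ast h_2\Big),
\]
and observe that, since we differentiate only in the $(m_2,h_2)$ variable with $(m_1,h_1)$ fixed, the first component is the composite $m_2 \mapsto \G{L}_{m_1}\circ \vp_{h_1}(m_2)$ (where $\vp_{h_1} = h_1\rt(-)$), which depends on $m_2$ alone, while the second component is a product of three factors in $H$: $\g_{m_1}(h_1\rt m_2)$, $\s_{h_1}(m_2)$, and $h_2$. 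So $T_{(m_2,h_2)}L_{(m_1,h_1)}$ acts on a tangent vector $(\dot m_2,\dot h_2) \in T_{m_2}M\oplus T_{h_2}H$ by the chain rule together with the Leibniz rule for the three-factor product in $H$, giving a sum of terms in which $\dot m_2$ is fed through $T\G{L}_{m_1}\circ T\vp_{h_1}$, through $T\g_{m_1}\circ T\vp_{h_1}$ (then right-translated past $\s(h_1,m_2)\ast h_2$), and through $T\s_{h_1}$ (then left-translated by $\g(m_1,h_1\rt m_2)$ and right-translated by $h_2$), while $\dot h_2$ is simply left-translated by $\g(m_1,h_1\rt m_2)\ast\s(h_1,m_2)$.

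Next I would dualize. The $H$-component of $TL_{(m_1,h_1)}$ receives contributions only from the $H$-factor $h_2$ (the first two factors do not involve $h_2$), so its transpose is exactly $T^\ast L_{\g(m_1,h_1\rt m_2)\ast\s(h_1,m_2)}$ applied to $\b$, which is the stated second component. For the $M$-component one collects the three $\dot m_2$-contributions; transposing the sum and pairing against $(\a,\b)$ produces three summands: $T^\ast\G{L}_{m_1}\a$ pulled back along $\vp_{h_1}$ (this is the $\overset{\ast}{\lt} h_1$ operation, i.e. the cotangent-lifted $H$-action on $T^\ast M$), then $T^\ast(\g_{m_1}\circ R_{\s(h_1,m_2)\ast h_2})\b$ likewise pulled back along $\vp_{h_1}$, and finally $T^\ast(\s_{h_1}\circ L_{\g(m_1,h_1\rt m_2)}\circ R_{h_2})\b$. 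Matching the order of composition of $L,R,\g_m,\s_h$ with the order in which the translations were applied on the tangent side gives precisely the asserted formula.

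The main obstacle is bookkeeping rather than conceptual: one must be careful that $\vp_{h_1}$ appears \emph{inside} $\g_{m_1}$ and $\G{L}_{m_1}$ (because the actual first argument of everything is $h_1\rt m_2$, not $m_2$), so that on dualizing one gets the "twisted" actions $\overset{\ast}{\lt}h_1$ and $\overset{\ast}{\vartriangleleft}h_1$ rather than bare cotangent lifts; and one must keep the three $H$-factors in the correct left/right order so that $R_{\s(h_1,m_2)\ast h_2}$, $L_{\g(m_1,h_1\rt m_2)}$, and $R_{h_2}$ land on the right factors when transposed. I would fix notation for the two auxiliary maps
\[
P: n\mapsto \g(m_1, h_1\rt n),\qquad Q: n\mapsto \s(h_1,n),
\]
differentiate $n\mapsto P(n)\ast Q(n)\ast h_2$ once and for all, and then read off the transpose; the identity $T^\ast L$ of a product $abc$ in a Lie group decomposing as the three expected translated pieces is the only genuinely computational input, and it is routine.
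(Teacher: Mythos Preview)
Your approach is correct and is precisely what the paper does: the paper's proof consists of the single line
\[
\langle T^{\ast}_{(m_2,h_2)}L_{(m_1,h_1)}(\alpha,\beta), (U,S)\rangle = \langle (\alpha,\beta), TL_{(m_1,h_1)}(U,S)\rangle,
\]
leaving all of the chain-rule/Leibniz bookkeeping that you have spelled out to the reader. Your proposal is simply the fully expanded version of that one line.
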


\begin{proof}
Given any $U := U_{m_2}\in T_{m_2}M$, and any $S := S_{h_2} \in T_{h_2}H$, the claim follows from the pairing
\[
\langle T^{\ast}_{(m_2,h_2)}L_{(m_1,h_1)}(\alpha,\beta), (U,S)\rangle = \langle (\alpha,\beta), TL_{(m_1,h_1)}(U,S)\rangle.
\]
\end{proof}

Accordingly, in particular for $(m_2,h_2)=(o,1)$, we conclude the following. 

\begin{corollary}
Given any $\a:=\alpha_m\in T^{\ast}_m M$, and any $ \b:=\beta_h\in T^{\ast}_h H$,
\begin{equation}\label{T^*(e,1)}
T^{\ast}L_{(m,h)}(\alpha,\beta)=\Big(
  (T^{\ast}\G{L}_m\alpha )\overset{\ast}{\lt} h+(T^{\ast}\left( \gamma_m 
\circ R_{h}\right)  \beta )\overset{\ast}{\lt} h+T^{\ast} \s_{h} 
\beta, T^{\ast}L_{h}\beta\Big).
\end{equation}  
\end{corollary}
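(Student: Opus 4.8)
The statement is just the $(m_2,h_2)=(o,1)$ specialization of the preceding proposition, so the plan is to substitute and simplify. First I would record the degeneracies forced by the normalization axioms at the identity element: $h\rt o=o$ together with $m\cdot o=m$ gives $m_1\cdot(h_1\rt m_2)=m_1$; $\g(m,o)=1$ and $\s(h,o)=h$ give $\g(m_1,h_1\rt m_2)=1$ and $\s(h_1,m_2)=h_1$, whence $\g(m_1,h_1\rt m_2)\ast\s(h_1,m_2)\ast h_2=h_1$. Consequently the hypothesis $\a=\alpha_{m_1\cdot(h_1\rt m_2)}$ becomes $\a=\alpha_{m_1}\in T^\ast_{m_1}M$ and $\b=\beta_{\g(m_1,h_1\rt m_2)\ast\s(h_1,m_2)\ast h_2}$ becomes $\b=\beta_{h_1}\in T^\ast_{h_1}H$; renaming $m_1$ as $m$ and $h_1$ as $h$ reproduces the data of the corollary.

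Next I would feed these identities into the two components of the proposition's formula. In the second slot, $\g(m_1,h_1\rt m_2)\ast\s(h_1,m_2)=1\ast h=h$, so $T^\ast L_{\g(m_1,h_1\rt m_2)\ast\s(h_1,m_2)}\beta=T^\ast L_h\beta$, the asserted second component. In the first slot, $R_{\s(h_1,m_2)\ast h_2}=R_{h\ast 1}=R_h$, while $L_{\g(m_1,h_1\rt m_2)}=L_1=\Id$ and $R_{h_2}=R_1=\Id$; hence the three summands $(T^\ast\G{L}_{m_1}\alpha)\overset{\ast}{\lt}h_1$, $(T^\ast(\g_{m_1}\circ R_{\s(h_1,m_2)\ast h_2})\beta)\overset{\ast}{\lt}h_1$ and $T^\ast(\s_{h_1}\circ L_{\g(m_1,h_1\rt m_2)}\circ R_{h_2})\beta$ collapse to $(T^\ast\G{L}_m\alpha)\overset{\ast}{\lt}h$, $(T^\ast(\g_m\circ R_h)\beta)\overset{\ast}{\lt}h$ and $T^\ast\s_h\beta$, which is the claimed first component.

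There is no genuine obstacle here; the only care needed is to check that, after these substitutions, the maps $\G{L}_m$, $\g_m$, $\s_h$, $L_h$ and their cotangent lifts indeed have the asserted source and target cotangent spaces, so that the proposition applies verbatim. If a self-contained argument is preferred over invoking the proposition, one may simply rerun its proof with $(m_2,h_2)=(o,1)$ from the outset: pair $T^\ast L_{(m,h)}(\alpha,\beta)$ against an arbitrary $(U,S)\in T_{(o,1)}(M\bowtie_\g H)=T_oM\oplus\G{h}$, expand $L_{(m,h)}$ via the multiplication \eqref{cocycle-double-cross-prod-multp} and the normalizations \eqref{normalization-gr}, \eqref{phi-map-gr-e}, \eqref{theta-map-gr-e}, and read off the result directly, thereby bypassing the need to track the degeneration of base points in the general formula.
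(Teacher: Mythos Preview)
Your proposal is correct and follows exactly the paper's approach: the paper derives the corollary simply by specializing the preceding proposition to $(m_2,h_2)=(o,1)$, and you have carried out that substitution explicitly using the normalization identities \eqref{normalization-gr}, \eqref{phi-map-gr-e}, \eqref{theta-map-gr-e}. Your write-up is in fact more detailed than what the paper provides.
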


As for the coadjoint action, we proceed as follows. 

\begin{proposition}
Given any $v\in T_o M$, any $\eta\in \mathfrak{h}$, any $\a=\alpha_o\in T^{\ast}_oM $, and any $\b \in \mathfrak{h}^{\ast}$, 
 \begin{equation}\label{coadact}
 \ad^{\ast}_{(v,\eta)}(\alpha, \b)=\left(\G{ad}^\ast_v(\alpha)-\alpha\overset{\ast}{\lt} \eta -\G{a}_{\eta}^{\ast}(\b)- \theta^{\ast}_{v}(\b) , \ad_{\eta}^{\ast}\b+\mathfrak{b}_{v}^{\ast}(\alpha)+{}_{v}\psi^{\ast}(\b)
\right ),
\end{equation}
 where
 \begin{align*}
 &\G{ad}_v:T_oM\to T_oM, \qquad w\mapsto [[v,w]],\\
 & \mathfrak{b}_{v}:\G{h}\to T_oM, \qquad \eta\mapsto \eta\rt v, \\
 & \G{a}_{\eta}:T_oM\to \G{h}, \qquad v\mapsto \psi(\eta,v), \\
 & {}_{v}\psi:\G{h}\to \G{h}, \qquad \eta\mapsto \psi(\eta,v), \\
 & \t_v: T_oM\to \G{h}, \qquad w \mapsto \t(v,w).
 \end{align*}
 \end{proposition}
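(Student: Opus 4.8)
The plan is to compute the coadjoint action by dualizing the adjoint action of the Lie algebra $\G{g} = T_oM \bowtie_\t \G{h}$ on itself, using the bracket formula \eqref{cocycle-double-cross-sum-bracket}. Concretely, for $(v,\eta)\in\G{g}$ the operator $\ad_{(v,\eta)}$ sends $(w,\zeta)\mapsto [(v,\eta),(w,\zeta)]$, and by \eqref{cocycle-double-cross-sum-bracket} this equals
\[
\big([[v,w]] + \eta\rt w - \zeta\rt v,\ [\eta,\zeta] + \psi(\eta,w) - \psi(\zeta,v) + \t(v,w)\big).
\]
First I would reorganize the right-hand side as a block-linear map in $(w,\zeta)$, reading off the four component maps: the $T_oM$-component depends on $w$ through $\G{ad}_v(w)=[[v,w]]$ and $\eta\rt w = \mathfrak{b}_\eta$-type action, and on $\zeta$ through $-\zeta\rt v = -\mathfrak{b}_v(\zeta)$; the $\G{h}$-component depends on $w$ through $\psi(\eta,w)$ (the map $\G{a}_\eta$) and $\t(v,w)=\t_v(w)$, and on $\zeta$ through $[\eta,\zeta]=\ad_\eta(\zeta)$ and $-\psi(\zeta,v) = -{}_v\psi(\zeta)$.

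Next I would dualize each block. Pairing $\ad_{(v,\eta)}(w,\zeta)$ with $(\alpha,\b)\in T_o^\ast M \oplus \G{h}^\ast$ and collecting the terms that pair against $w$ versus against $\zeta$ gives the two components of $\ad^\ast_{(v,\eta)}(\alpha,\b)$. The $w$-pairing produces $\langle \G{ad}_v^\ast\alpha, w\rangle$ from $[[v,w]]$ (note the sign: $\ad^\ast$ of the operator $w\mapsto[[v,w]]$, which here with $\G{ad}_v w = [[v,w]]$ contributes $\G{ad}_v^\ast\alpha$; the paper's sign convention for $\ad^\ast$ versus the transpose must be tracked carefully), the term $\langle\alpha,\eta\rt w\rangle = \langle \alpha\overset{\ast}{\lt}\eta, w\rangle$ defining the notation $\overset{\ast}{\lt}$, the term $\langle\b,\psi(\eta,w)\rangle = \langle \G{a}_\eta^\ast\b, w\rangle$, and $\langle\b,\t(v,w)\rangle=\langle\t_v^\ast\b,w\rangle$; summing these with the correct signs yields the first component $\G{ad}^\ast_v(\alpha) - \alpha\overset{\ast}{\lt}\eta - \G{a}_\eta^\ast(\b) - \t_v^\ast(\b)$. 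Likewise the $\zeta$-pairing picks up $-\langle\alpha,\zeta\rt v\rangle = \langle \mathfrak{b}_v^\ast\alpha,\zeta\rangle$, $\langle\b,[\eta,\zeta]\rangle = \langle\ad_\eta^\ast\b,\zeta\rangle$, and $-\langle\b,\psi(\zeta,v)\rangle = \langle {}_v\psi^\ast\b,\zeta\rangle$, giving the second component $\ad_\eta^\ast\b + \mathfrak{b}_v^\ast(\alpha) + {}_v\psi^\ast(\b)$. Matching these against \eqref{coadact} then finishes the proof.

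The main obstacle I expect is bookkeeping of signs and of the precise meaning of $\overset{\ast}{\lt}$ and of $\ad^\ast$ in the paper's convention: whether $\ad^\ast$ denotes the literal transpose $(\ad)^\ast$ or its negative, and which of the $(v,\eta)$-entries carry a minus sign when they appear as $-\zeta\rt v$ and $-\psi(\zeta,v)$ in \eqref{cocycle-double-cross-sum-bracket}. One must also be careful that $\G{ad}_v$, $\t_v$, $\G{a}_\eta$, etc., are genuinely linear in the argument being dualized — they are, since \eqref{phi-map}–\eqref{right-action} are linear maps — so each dual map $\G{ad}_v^\ast\colon T_o^\ast M\to T_o^\ast M$, $\t_v^\ast\colon \G{h}^\ast\to T_o^\ast M$, $\G{a}_\eta^\ast\colon\G{h}^\ast\to T_o^\ast M$, $\mathfrak{b}_v^\ast\colon T_o^\ast M\to\G{h}^\ast$, ${}_v\psi^\ast\colon\G{h}^\ast\to\G{h}^\ast$ is well defined. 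Beyond these convention checks the computation is a routine transpose of a block matrix, and the statement follows directly; the specialization to $(m,h)$-translations having already been handled, no further geometry is needed here — this is purely the infinitesimal (Lie-algebra-level) dual computation.
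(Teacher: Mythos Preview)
Your approach is correct and is precisely the one the paper takes: the paper's proof is the single line $\langle \ad^{\ast}_{(v_1,\eta_1)}(\alpha,\beta), (v_2,\eta_2)\rangle=-\langle(\alpha,\beta),[(v_1,\eta_1),(v_2,\eta_2)]\rangle$ together with \eqref{cocycle-double-cross-sum-bracket}, and your blockwise dualization is exactly the expanded form of that computation. Your only caution about conventions is well placed --- note that in this paper $\ad^\ast$ (and likewise $\G{ad}^\ast$) carries the extra minus sign, while the starred maps $\G{a}_\eta^\ast$, $\t_v^\ast$, $\mathfrak{b}_v^\ast$, ${}_v\psi^\ast$ and $\overset{\ast}{\lt}$ are the plain transposes; with that choice every sign in \eqref{coadact} matches your term-by-term pairing.
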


\begin{proof}
The claim follows at once from \eqref{cocycle-double-cross-sum-bracket}, and 
\[
\langle \ad^{\ast}_{(v_1,\eta_1)}(\a,\b), (v_2,\eta_2)\rangle=-\langle(\a,\b),[(v_1,\eta_1),(v_2,\eta_2)]\rangle.
\]
\end{proof}
 
We are now ready to present the Euler-Lagrange equations for a Lagrangian on the tangent group of a cocycle double cross product group.

\begin{proposition}
Given a Lagrangian $\C{L}=\C{L}\left(m,h,v,\eta\right)$
defined on $(M\bowtie_\g  H)\ltimes(T_o M\bowtie_{\t} \mathfrak{h})$, the (trivialized) Euler-Lagrange equations are given by
\begin{align}
&\frac{d}{dt}\frac{\delta\C{L}}{\delta v}   =  
(T^{\ast}\G{L}_m\left(\frac{\delta\C{L}}{\delta m}\right) )\overset{\ast}{\lt} h+(T^{\ast}\left( \gamma_m 
\circ R_{h}\right)\left(  \frac{\delta\C{L}}{\delta h}\right))\overset{\ast}{\lt} h+ \label{mEL}\\
& \hspace{4cm} T^{\ast} \s_{h} \left(  \frac{\delta\C{L}}{\delta h}\right) -\G{ad}^\ast_v\left(\frac{\delta\C{L}}{\delta v}\right)+\left(\frac{\delta\C{L}}{\delta v}\right)\overset{\ast}{\lt} \eta +\G{a}_{\eta}^{\ast}\left(  \frac{\delta\C{L}}{\delta \eta}\right)+\theta^{\ast}_{v}\left(  \frac{\delta\C{L}}{\delta \eta}\right),\notag\\
&\frac{d}{dt}\frac{\delta\C{L}}{\delta\eta}   =T^{\ast}L_h\left(  \frac{\delta\C{L}}{\delta h}\right)  -\ad_{\eta}^{\ast}\left(\frac{\delta\C{L}}{\delta\eta}\right)-\mathfrak{b}_v^{\ast}\left(\frac
{\delta\C{L}}{\delta v}\right) - {}_{v}\psi^{\ast}\left(\frac{\delta\C{L}}{\delta h}\right). \label{mEL-II}
\end{align}
\end{proposition}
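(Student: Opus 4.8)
## Proof Proposal

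The plan is to deduce these equations directly from the general trivialized Euler–Lagrange equations \eqref{EL-eqn} by substituting the cocycle double cross product structure worked out in the preceding propositions. The point is that \eqref{EL-eqn} holds on any Lie group $G$, and here $G = M\bowtie_\g H$ with trivialized tangent group $(M\bowtie_\g H)\ltimes (T_oM\bowtie_\t \G{h})$. Writing a variable of $\G{g}=T_oM\bowtie_\t\G{h}$ as $(v,\eta)$ and a variable of $\G{g}^\ast$ as $(\alpha,\beta)$, the Lagrangian becomes $\C{L}=\C{L}(m,h,v,\eta)$, and the fibre derivative $\delta\C{L}/\delta\xi$ decomposes into the pair $\big(\delta\C{L}/\delta v,\ \delta\C{L}/\delta \eta\big)\in T_oM^\ast\oplus\G{h}^\ast$. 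Likewise the configuration derivative $\delta\C{L}/\delta g$ decomposes into $\big(\delta\C{L}/\delta m,\ \delta\C{L}/\delta h\big)$.

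First I would expand the term $T^\ast L_g\big(\delta\C{L}/\delta g\big)$ in \eqref{EL-eqn}. This is exactly the cotangent lift of left translation evaluated at the identity, which is the content of the Corollary: formula \eqref{T^*(e,1)} gives
\[
T^{\ast}L_{(m,h)}(\alpha,\beta)=\Big(
  (T^{\ast}\G{L}_m\alpha )\overset{\ast}{\lt} h+(T^{\ast}\left( \gamma_m
\circ R_{h}\right)  \beta )\overset{\ast}{\lt} h+T^{\ast} \s_{h}
\beta,\ T^{\ast}L_{h}\beta\Big),
\]
with $\alpha=\delta\C{L}/\delta m$ and $\beta=\delta\C{L}/\delta h$. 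This produces precisely the first three terms on the right-hand side of \eqref{mEL} and the first term on the right-hand side of \eqref{mEL-II}.

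Next I would expand the term $\ad^\ast_\xi\big(\delta\C{L}/\delta\xi\big)$ with $\xi=(v,\eta)$ and $\delta\C{L}/\delta\xi = (\delta\C{L}/\delta v,\ \delta\C{L}/\delta\eta)$, using the coadjoint action formula \eqref{coadact}:
\[
\ad^{\ast}_{(v,\eta)}(\alpha, \b)=\left(\G{ad}^\ast_v(\alpha)-\alpha\overset{\ast}{\lt} \eta -\G{a}_{\eta}^{\ast}(\b)- \theta^{\ast}_{v}(\b),\ \ad_{\eta}^{\ast}\b+\mathfrak{b}_{v}^{\ast}(\alpha)+{}_{v}\psi^{\ast}(\b)\right).
\]
Substituting $\alpha=\delta\C{L}/\delta v$, $\b=\delta\C{L}/\delta\eta$, and inserting the sign from $-\ad^\ast_\xi$ in \eqref{EL-eqn}, the $T_oM^\ast$-component yields the remaining four terms $-\G{ad}^\ast_v(\delta\C{L}/\delta v)+(\delta\C{L}/\delta v)\overset{\ast}{\lt}\eta+\G{a}^\ast_\eta(\delta\C{L}/\delta\eta)+\theta^\ast_v(\delta\C{L}/\delta\eta)$ of \eqref{mEL}, and the $\G{h}^\ast$-component yields the remaining three terms $-\ad^\ast_\eta(\delta\C{L}/\delta\eta)-\mathfrak{b}^\ast_v(\delta\C{L}/\delta v)-{}_v\psi^\ast(\delta\C{L}/\delta\eta)$ of \eqref{mEL-II}. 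Finally, the left-hand side $\tfrac{d}{dt}\tfrac{\delta\C{L}}{\delta\xi}$ splits componentwise into $\tfrac{d}{dt}\tfrac{\delta\C{L}}{\delta v}$ and $\tfrac{d}{dt}\tfrac{\delta\C{L}}{\delta\eta}$, completing the derivation.

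The only subtle point — and thus the main thing to verify carefully — is the compatibility of the two decompositions: one must check that the natural splitting $\G{g}^\ast \cong T_oM^\ast\oplus\G{h}^\ast$ dual to $\G{g}\cong T_oM\oplus\G{h}$ is the one implicitly used in \eqref{T^*(e,1)} and \eqref{coadact}, so that projecting \eqref{EL-eqn} onto the two summands is legitimate and no cross terms are lost. Since \eqref{coadact} was itself derived from the bracket \eqref{cocycle-double-cross-sum-bracket} via the defining pairing, and \eqref{T^*(e,1)} from the tangent map of left translation via its defining pairing, both are expressed in exactly these coordinates, so the projection is clean. Assembling the two components then gives \eqref{mEL} and \eqref{mEL-II} verbatim.
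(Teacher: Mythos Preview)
Your proposal is correct and follows essentially the same route as the paper: start from the general trivialized Euler--Lagrange equation \eqref{EL-eqn}, substitute the cotangent-lift formula \eqref{T^*(e,1)} for $T^\ast L_{(m,h)}$ and the coadjoint formula \eqref{coadact} for $\ad^\ast_{(v,\eta)}$, and read off the two components. Your extra remark about the compatibility of the splittings is a welcome clarification; note also that your derivation (and \eqref{coadact}) produce ${}_v\psi^\ast(\delta\C{L}/\delta\eta)$ in the $\G{h}^\ast$-component, so the occurrence of $\delta\C{L}/\delta h$ in the stated \eqref{mEL-II} appears to be a typographical slip.
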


\begin{proof}
It follows from \eqref{EL-eqn} that the (trivialized) Euler-Lagrange equations are given by
\[
\frac{d}{dt}\left(\frac{\delta\C{L}}{\delta v},\frac{\delta
\C{L}}{\delta\eta}\right)  = T^\ast L_{(m,h)}\left(\frac{\delta\C{L}}{\delta m},\frac{\delta\C{L}}{\delta h}\right) -\ad_{\left(v,\eta\right)
}^{\ast}\left(  \frac{\delta\C{L}}{\delta v},\frac{\delta\C{L}}{\delta\eta}\right).
\]
Then, on the one hand \eqref{T^*(e,1)} yields
\begin{align*}
& T^\ast L_{(m,h)}\left(\frac{\delta\C{L}}{\delta m},\frac{\delta\C{L}}{\delta h}\right) = \Big((T^{\ast}\G{L}_m\left(\frac{\delta\C{L}}{\delta m}\right) )\overset{\ast}{\lt} h+(T^{\ast}\left( \gamma_m 
\circ R_{h}\right)\left(  \frac{\delta\C{L}}{\delta h}\right))\overset{\ast}{\lt} h+T^{\ast} \s_{h} \left(  \frac{\delta\C{L}}{\delta h}\right), T^{\ast}L_{h}\left(  \frac{\delta\C{L}}{\delta h}\right)\Big),
\end{align*}
on the other hand it follows from \eqref{coadact} that
\begin{align*}
& \ad_{\left(v,\eta\right)
}^{\ast}\left(  \frac{\delta\C{L}}{\delta v},\frac{\delta\C{L}}{\delta\eta}\right) = \\
& \left(\G{ad}^\ast_v\left(\frac{\delta\C{L}}{\delta v}\right)-\left(\frac{\delta\C{L}}{\delta v}\right)\overset{\ast}{\lt} \eta -\G{a}_{\eta}^{\ast}\left(  \frac{\delta\C{L}}{\delta \eta}\right)- \theta^{\ast}_{v}\left(  \frac{\delta\C{L}}{\delta \eta}\right) , \ad_{\eta}^{\ast}\left(  \frac{\delta\C{L}}{\delta \eta}\right)+\mathfrak{b}_{v}^{\ast}\left(\frac{\delta\C{L}}{\delta v}\right)+{}_{v}\psi^{\ast}\left(  \frac{\delta\C{L}}{\delta \eta}\right)
\right ). 
\end{align*}
The claim, thus, follows.
\end{proof}

The reduction of $T(M\bowtie_\g  H) \cong (M\times T_oM) \times TH$ by $TH$, hence, yields Euler-Lagrange type equations on $TM \cong T(M\bowtie_\g  H) / TH \cong M\times T_oM$. Explicitly, we have the following.

\begin{corollary}
The Euler-Lagrange type equations on $TM \cong M\times T_o M$, associated to a (reduced) Lagrangian $\C{L}=\C{L}\left(m,v\right)$ on $M\times T_o M$ are given by
\begin{equation}
\frac{d}{dt}\frac{\delta\C{L}}{\delta v}   =  
T^{\ast}\G{L}_m\left(\frac{\delta\C{L}}{\delta m}\right)  -\G{ad}^\ast_v\left(\frac{\delta\C{L}}{\delta v}\right).
\end{equation}
\end{corollary}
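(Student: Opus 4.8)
The plan is to obtain the reduced equations directly from the trivialized Euler-Lagrange equations \eqref{mEL}--\eqref{mEL-II} on $(M\bowtie_\g H)\ltimes(T_oM\bowtie_\t\G{h})$ by passing to an $H$-invariant (more precisely, $TH$-invariant) Lagrangian and discarding the $\G{h}$-component. Concretely, one starts with a Lagrangian $\C{L}=\C{L}(m,h,v,\eta)$ that is invariant under the $TH$-action described in the remark following the tangent-group proposition; such a Lagrangian descends to a function $\C{L}(m,v)$ on $TM\cong M\times T_oM\cong T(M\bowtie_\g H)/TH$, and conversely any reduced Lagrangian on $M\times T_oM$ lifts to a $TH$-invariant one. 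Invariance forces $\delta\C{L}/\delta h=0$ and $\delta\C{L}/\delta\eta=0$ along the reduced trajectories (the Lagrangian does not depend on the $H$- and $\G{h}$-directions), so the second equation \eqref{mEL-II} becomes vacuous, and every term in \eqref{mEL} that carries a factor $\delta\C{L}/\delta h$ or $\delta\C{L}/\delta\eta$ — namely the $(T^\ast(\g_m\circ R_h)(\delta\C{L}/\delta h))\mathbin{\overset{\ast}{\lt}}h$ term, the $T^\ast\s_h(\delta\C{L}/\delta h)$ term, the $\G{a}_\eta^\ast(\delta\C{L}/\delta\eta)$ term, and the $\theta_v^\ast(\delta\C{L}/\delta\eta)$ term — drops out. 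Similarly the term $(\delta\C{L}/\delta v)\mathbin{\overset{\ast}{\lt}}\eta$ must be handled: it arises from the coadjoint action, and in the reduced picture the pairing of $\delta\C{L}/\delta v$ with $\eta\rt(\cdot)$ is absorbed because the reduced variational principle only tests against variations tangent to $M$; what survives is exactly $(T^\ast\G{L}_m(\delta\C{L}/\delta m))\mathbin{\overset{\ast}{\lt}}1 = T^\ast\G{L}_m(\delta\C{L}/\delta m)$ together with $-\G{ad}_v^\ast(\delta\C{L}/\delta v)$.

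The cleaner route, which I would actually write out, is to run the reduction at the level of the variational principle rather than term-by-term on the equations. One notes that the reduction $T(M\bowtie_\g H)\cong(M\times T_oM)\times TH$ with the $TH$-action is precisely the setup of Lagrangian reduction by a subgroup: the Euler-Poincar\'e/Euler-Lagrange reduction theorem (the group version of \eqref{EL-eqn}$\to$\eqref{EP-eqn}, as in \cite{cendra1998lagrangian}) applies with the group $TH$ in place of the full symmetry group. Since the Lagrangian is independent of the $TH$-variable, Hamilton's principle $\delta\int\C{L}\,dt=0$ over curves in $T(M\bowtie_\g H)$ with fixed endpoints reduces to a constrained variational principle over curves $(m(t),v(t))$ in $M\times T_oM$, where the admissible variations $\delta m$ are free (with fixed endpoints) but the induced variation $\delta v$ of the second slot is subject to the reduced Euler-Poincar\'e-type constraint coming from the $\cdot$-multiplication and the adjoint action $\Ad_{(m,1)^{-1}}$ computed in Proposition \ref{prop-Ad-cocycle-cross}. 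Carrying out the integration by parts exactly as in the standard Euler-Poincar\'e derivation, using that the $\G{h}$-valued pieces of $\Ad$ and of $\Lambda$ pair trivially with $\delta\C{L}/\delta v$ (because $\delta\C{L}/\delta\eta$ and $\delta\C{L}/\delta h$ vanish), one is left with
\[
\frac{d}{dt}\frac{\delta\C{L}}{\delta v}=T^\ast\G{L}_m\left(\frac{\delta\C{L}}{\delta m}\right)-\G{ad}_v^\ast\left(\frac{\delta\C{L}}{\delta v}\right),
\]
which is the asserted equation; here $\G{L}_m$ and $\G{ad}_v$ are the maps already introduced (the $\cdot$-translation on $M$ and $w\mapsto[[v,w]]$ on $T_oM$), both of which make sense even though $M$ is not a group and $T_oM\cong\G{g}/\G{h}$ is not a Lie algebra.

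The main obstacle, and the point deserving genuine care, is justifying that the $\G{h}$-directions genuinely decouple: one must check that $TH$-invariance of $\C{L}$ on the full group really is equivalent to $\C{L}$ depending only on $(m,v)$, and that the projection $T(M\bowtie_\g H)/TH\cong M\times T_oM$ is compatible with the trivializations so that the pushed-forward equations are exactly \eqref{mEL} with the $h$- and $\eta$-derivatives set to zero and the $\eta$-equation \eqref{mEL-II} discarded. This requires verifying that the $TH$-action on $M\times T_oM$ read off from $\btr$ and $\Sigma$ in the preceding remark is free and proper enough for the quotient to be the manifold $M\times T_oM$ (it is, since $M\cong G/H$ already), and that the reduced bracket operations $[[\,\cdot\,,\cdot\,]]$ and $\rt$ descend consistently — but all of this is exactly the content of Proposition \ref{prop-cocycle-double-sum-universal} and the tangent-group proposition, which we are entitled to use. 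Everything else is the routine integration-by-parts bookkeeping of the Euler-Poincar\'e reduction, so I would present the variational argument in a few lines and simply point out which terms of \eqref{mEL}--\eqref{mEL-II} survive.
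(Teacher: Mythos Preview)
Your approach matches the paper's: the paper offers no proof beyond the lead-in sentence, treating the corollary as an immediate consequence of the $TH$-reduction of \eqref{mEL}--\eqref{mEL-II}, and your procedure of setting $\delta\C{L}/\delta h=\delta\C{L}/\delta\eta=0$ and evaluating at the identity representative $h=1$, $\eta=0$ reproduces exactly that.

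One correction worth noting: equation \eqref{mEL-II} does \emph{not} become vacuous once $\delta\C{L}/\delta h$ and $\delta\C{L}/\delta\eta$ vanish --- the term $\mathfrak{b}_v^{\ast}(\delta\C{L}/\delta v)$ survives, leaving the residual relation $0=\mathfrak{b}_v^{\ast}(\delta\C{L}/\delta v)$. In a careful Lagrange--Poincar\'e reduction this would be the vertical (momentum-map) equation, and your variational ``cleaner route'' would have to account for it too. The paper sidesteps the issue entirely by simply discarding \eqref{mEL-II} and declaring the corollary to \emph{define} the Euler--Lagrange \emph{type} equations on $M\times T_oM$ by formal analogy with \eqref{EL-eqn}; so this inaccuracy does not affect the stated result, but ``vacuous'' overstates what happens.
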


On the other extreme, the reduction of $T(M\bowtie_\g  H) \cong (M\bowtie_\g  H) \ltimes (T_oM \bowtie_\t \G{h})$ by $M\bowtie_\g  H$ gives rise to the Euler-Poincar\'e equations on $T_oM \bowtie_\t \G{h}$. More precisely, we have the following.

\begin{corollary}
Given a (reduced) Lagrangian $\ell=\ell(v,\eta)$ on $T_oM\bowtie_\t \G{h}$, the Euler-Poincar\'{e} equations are given by 
 \begin{align}
& \frac{d}{dt}\frac{\delta\ell}{\delta v}   =  -\G{ad}^\ast_v\left(\frac{\delta\ell}{\delta v}\right)+\left(\frac{\delta\ell}{\delta v}\right)\overset{\ast}{\lt} \eta +\G{a}_{\eta}^{\ast}\left(  \frac{\delta\ell}{\delta \eta}\right)+\theta^{\ast}_{v}\left(  \frac{\delta\ell}{\delta \eta}\right)\label{EPE},\\
&\frac{d}{dt}\frac{\delta\ell}{\delta\eta}   =-\ad_{\eta}^{\ast}\left(\frac{\delta\ell}{\delta\eta}\right)-\mathfrak{b}_v^{\ast}\left(\frac
{\delta\ell}{\delta v}\right) - {}_{v}\psi^{\ast}\left(\frac{\delta\ell}{\delta \eta}\right).\label{EPE-II}
\end{align}
\end{corollary}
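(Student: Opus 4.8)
The plan is to obtain \eqref{EPE}--\eqref{EPE-II} as the $(M\bowtie_\g H)$-reduction of the Euler-Lagrange equations \eqref{mEL}--\eqref{mEL-II}, exactly as \eqref{EP-eqn} is obtained from \eqref{EL-eqn} by discarding the cotangent-lift (base-point) term. Equivalently, one may read them off the general trivialized Euler-Poincar\'e equation by inserting the coadjoint-action formula \eqref{coadact}; both routes amount to the same short computation, and I would present them together.

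First I would recall that, under the left trivialization $T(M\bowtie_\g H)\cong(M\bowtie_\g H)\ltimes(T_oM\bowtie_\t\G{h})$, the lifted left translation action of $G=M\bowtie_\g H$ on $TG$ acts only on the group factor and leaves the Lie-algebra factor fixed; hence the orbit space is $G\backslash TG\cong\G{g}=T_oM\bowtie_\t\G{h}$, with quotient map $(m,h,v,\eta)\mapsto(v,\eta)$, and a $G$-invariant Lagrangian is precisely one of the form $\C{L}(m,h,v,\eta)=\ell(v,\eta)$. For such an $\ell$ one has, identically, $\delta\C{L}/\delta m=0$ and $\delta\C{L}/\delta h=0$, while $\delta\C{L}/\delta v=\delta\ell/\delta v$ and $\delta\C{L}/\delta\eta=\delta\ell/\delta\eta$. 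Substituting these four identities into \eqref{mEL} and \eqref{mEL-II}, the three terms built from $T^\ast\G{L}_m$, $T^\ast(\gamma_m\circ R_h)$ and $T^\ast\s_h$ in \eqref{mEL}, together with the $T^\ast L_h$ term in \eqref{mEL-II}, all vanish, and what survives is exactly \eqref{EPE}--\eqref{EPE-II}.

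Alternatively, and with the cleanest bookkeeping, one starts from the trivialized Euler-Poincar\'e equation $\frac{d}{dt}\frac{\delta\ell}{\delta\xi}=-\ad^\ast_\xi(\frac{\delta\ell}{\delta\xi})$ with $\xi=(v,\eta)\in T_oM\bowtie_\t\G{h}$ and $\frac{\delta\ell}{\delta\xi}=(\frac{\delta\ell}{\delta v},\frac{\delta\ell}{\delta\eta})\in T^\ast_oM\times\G{h}^\ast$, and then plugs in \eqref{coadact} with $(\alpha,\beta)=(\frac{\delta\ell}{\delta v},\frac{\delta\ell}{\delta\eta})$, reading off the $T^\ast_oM$-component and the $\G{h}^\ast$-component. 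This reproduces \eqref{EPE} and \eqref{EPE-II} verbatim. There is no genuine obstacle here: the only point that requires care is the sign bookkeeping, and the identification of which slot each of the maps $\G{ad}_v$, $\mathfrak{b}_v$, $\G{a}_\eta$, ${}_v\psi$ and $\t_v$ occupies in the transpose of the bracket \eqref{cocycle-double-cross-sum-bracket}; this has already been settled in establishing \eqref{coadact}. No geometric input beyond the classical validity of Lagrangian reduction of $TG$ by $G$ is needed, so the statement is immediate.
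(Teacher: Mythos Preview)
Your proposal is correct and follows exactly the paper's approach: the corollary is obtained by the $M\bowtie_\g H$-reduction of the trivialized Euler--Lagrange equations \eqref{mEL}--\eqref{mEL-II}, which amounts to taking $\C{L}(m,h,v,\eta)=\ell(v,\eta)$ so that $\delta\C{L}/\delta m=0$ and $\delta\C{L}/\delta h=0$, killing the cotangent-lift terms. Your alternative route via \eqref{EP-eqn} together with \eqref{coadact} is equivalent and equally acceptable.
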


\section{Hamiltonian Dynamics on Cocycle Double Cross Product Lie Groups}

\subsection{The cotangent group of a cocycle double cross product}~

Let us now investigate the structure of the cotangent group $T^\ast G$ for $G:=M\bowtie_\g H$. To this end, we shall make use of the (right) trivialization $tr_{T^\ast G}^R:T^\ast G\to \G{g}^\ast \rtimes G$, where $\G{g}^\ast = T_o^\ast M \oplus \G{h}^\ast$.

\begin{proposition}
The cotangent group $T^\ast (M\bowtie_\g H)$ of the cocycle double cross product $M\bowtie_\g H$ has the structure of a cocycle double cross product. More precisely,
\[
T^\ast(M\bowtie_\g H) \cong (T^\ast_oM\times M) \bowtie_\Upsilon T^\ast H
\]
as Lie groups, where $\Upsilon:(T^\ast_oM\times M)\times (T^\ast_oM\times M) \to T^\ast H$ is given by
\begin{equation}\label{Upsilon-on-T-star-MxT-star-M}
\Upsilon\big((\kappa_1,m_1), (\kappa_2,m_2)\big) =  \Big(T^\ast \big(L_{\g({m_1},{m_1}^r)^{-1}}\circ {}_{m_1}\vp \circ \G{L}_{m_1^r}\big)(\kappa_2), \g(m_1,m_2)\Big)
\end{equation}
for any $(\kappa_1,m_1), (\kappa_2,m_2) \in T^\ast_oM \times M$, where
\begin{align*}
& {}_m\vp: H\to M, \qquad h\mapsto h\rt m, \\
& \G{L}_m:M\to M, \qquad n\mapsto m\cdot n.
\end{align*}
\end{proposition}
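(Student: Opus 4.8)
The plan is to mimic the strategy already used for the tangent group $T(M\bowtie_\g H)$, invoking Proposition~\ref{prop-universal-gr} with the right trivialization $tr_{T^\ast G}^R:T^\ast G\to \G{g}^\ast\rtimes G$ in place of the left one. First I would fix the trivialization $T^\ast(M\bowtie_\g H)\cong \G{g}^\ast\rtimes (M\bowtie_\g H)$ with $\G{g}^\ast=T^\ast_oM\oplus\G{h}^\ast$, and define the candidate subgroup inclusion $j:T^\ast H\cong \G{h}^\ast\rtimes H\to T^\ast(M\bowtie_\g H)$ by $(\mu,h)\mapsto\big((0,\mu),(o,h)\big)$. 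The first step is to check that $j$ is a group homomorphism; since the cotangent group multiplication in the right trivialization involves the coadjoint action, this reduces to verifying that $\Ad^\ast$ (equivalently $\ad^\ast$) of the $\G{h}$-part restricts correctly to $\G{h}^\ast$, which follows from \eqref{coadact} (or directly from \eqref{Ad-cocycle-double-cross-prod}) exactly as in the tangent case. This identifies $T^\ast H$ as a subgroup, and the trivialization \eqref{TMH-trivialization}-style identification $T^\ast(M\bowtie_\g H)\cong (T^\ast_oM\times M)\times T^\ast H$ as manifolds gives, via Proposition~\ref{prop-universal-gr}, that $T^\ast(M\bowtie_\g H)$ is a cocycle double cross product of $(T^\ast_oM\times M,\,T^\ast H)$ for some $\Upsilon$.

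Next I would extract $\Upsilon$ explicitly. Following the recipe of Proposition~\ref{prop-universal-gr}, $\Upsilon\big((\kappa_1,m_1),(\kappa_2,m_2)\big)$ is read off from the $T^\ast H$-component of the product $\big((\kappa_1,m_1),1_{T^\ast H}\big)\big((\kappa_2,m_2),1_{T^\ast H}\big)$ computed inside $T^\ast(M\bowtie_\g H)$, i.e. from $(\kappa_1,m_1)(\kappa_2,m_2)$ in the trivialized cotangent group. Concretely, I would use that the group multiplication in $\G{g}^\ast\rtimes G$ is $(\xi_1,g_1)(\xi_2,g_2)=(\xi_1+\Ad^\ast_{g_1^{-1}}\xi_2,\, g_1g_2)$, feed in the multiplication \eqref{cocycle-double-cross-prod-multp} of $M\bowtie_\g H$ for the $G$-component, and then project the $\Ad^\ast$-twisted $\G{g}^\ast$-part onto $\G{h}^\ast$. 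The $M$-component of the product is just $\g(m_1,m_2)$ (the second slot of \eqref{cocycle-double-cross-prod-multp} at the identity of the $T_oM$-directions), which accounts for the second entry of \eqref{Upsilon-on-T-star-MxT-star-M}; the first entry should emerge as the $T^\ast_oM\to\G{h}^\ast$ block of $T^\ast(m_2,h_2)\mapsto$-translation dualized, and I expect it to collapse — using the normalization conditions \eqref{normalization-gr}, \eqref{phi-map-gr-e}, \eqref{theta-map-gr-e} together with the inverse formula \eqref{inverse-cdcp} — to the cotangent lift $T^\ast\big(L_{\g(m_1,m_1^r)^{-1}}\circ {}_{m_1}\vp\circ\G{L}_{m_1^r}\big)(\kappa_2)$.

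The main obstacle, as in the tangent-group proposition, is purely computational: keeping track of the many $\s$, $\g$, $\rt$ terms produced by $\Ad^\ast_{(m_2,h_2)^{-1}}$ (the dual of \eqref{Ad-cocycle-double-cross-prod}) restricted to the cotangent directions, and checking that nearly all of them vanish at the relevant base points because of the normalizations $\s(h,o)=h$, $\s(1,m)=1$, $\g(o,m)=\g(m,o)=1$. I would organize this by first dualizing Proposition~\ref{prop-Ad-cocycle-cross} to get $\Ad^\ast_{(m,h)^{-1}}$ on $\G{g}^\ast$, then evaluating on elements of the form $(\kappa,0)\in T^\ast_oM\oplus\G{h}^\ast$ and reading off the $\G{h}^\ast$-component; the surviving composition of cotangent lifts should be precisely $T^\ast\big(L_{\g(m_1,m_1^r)^{-1}}\circ {}_{m_1}\vp\circ\G{L}_{m_1^r}\big)$, and pairing against an arbitrary tangent vector $S\in T_\ast H$ (as in the earlier cotangent-lift proposition) furnishes the verification. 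A final routine check that the resulting $(\,\cdot\,,\Upsilon)$ satisfies the cocycle matched pair axioms \eqref{phi-map-gr-e}-\eqref{inverses-in-M-gr} is automatic, since it is inherited from the ambient group structure of $T^\ast(M\bowtie_\g H)$ via Proposition~\ref{prop-universal-gr}.
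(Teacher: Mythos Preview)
Your proposal is correct and follows essentially the same route as the paper: right-trivialize $T^\ast(M\bowtie_\g H)\cong(T^\ast_oM\oplus\G{h}^\ast)\rtimes(M\bowtie_\g H)$, check that $j:(\lambda,h)\mapsto\big((0,\lambda),(o,h)\big)$ is a group homomorphism, invoke Proposition~\ref{prop-universal-gr}, and then read off $\Upsilon$ from the product of two elements of $T^\ast_oM\times M$ using the dual of Proposition~\ref{prop-Ad-cocycle-cross}. One small slip to fix: in the right-trivialized multiplication the coadjoint operator that twists $\kappa_2$ is $\Ad^\ast_{(m_1,1)}$ (paired against $\Ad_{(m_1,1)^{-1}}$ via $\langle\Ad^\ast_g\lambda,\eta\rangle=\langle\lambda,\Ad_{g^{-1}}\eta\rangle$), not $\Ad^\ast_{(m_2,h_2)^{-1}}$; this is why the surviving $\G{h}^\ast$-component carries the subscript $m_1$ in \eqref{Upsilon-on-T-star-MxT-star-M}.
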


\begin{proof}
In view of the (right) trivialization 
\begin{equation}\label{right-triv-T-star-MH}
T^\ast(M\bowtie_\g H) \cong  (T^\ast_oM\oplus \G{h}^\ast) \rtimes (M\bowtie_\g H)
\end{equation}
of the cotangent group $T^\ast(M\bowtie_\g H)$, we set
\begin{equation}\label{T-star-H-into-T-star-(MxH)}
j:T^\ast H \cong \G{h}^\ast\rtimes H \to T^\ast(M\bowtie_\g H) \cong  (T^\ast_oM\oplus \G{h}^\ast)\rtimes (M\bowtie_\g H), \qquad (\lambda,h) \mapsto \Big((0,\lambda), (o,h)\Big).
\end{equation}
Then, for any $(\lambda_1,h_1),(\lambda_2,h_2) \in T^\ast H \cong \G{h}^\ast \rtimes H$,
\begin{align*}
& j(\lambda_1,h_1)j(\lambda_2,h_2) = \Big((0,\lambda_1), (o,h_1)\Big)\Big((0,\lambda_2), (o,h_2)\Big) = \Big((0,\lambda_1)+\Ad^\ast_{(o,h_1)}(0,\lambda_2) , (o,h_1\ast h_2)\Big) = \\
&\Big((0,\lambda_1+\Ad^\ast_{h_1}\lambda_2), (o,h_1\ast h_2)\Big) = j((\lambda_1,h_1)(\lambda_2,h_2)),
\end{align*}
using 
\[
\langle \Ad^\ast_h \lambda, \eta\rangle = \langle \lambda, \Ad_{h^{-1}}\eta\rangle
\]
on the third equality. That is, the inclusion \eqref{T-star-H-into-T-star-(MxH)} is a group homomorphism, via which $T^\ast H$ may be realised as a subgroup of $T^\ast(M\bowtie_\g H)$. Moreover, in view of the trivialization \eqref{right-triv-T-star-MH} we have
\begin{equation}\label{T-star-MH-trivialization}
T^\ast(M\bowtie_\g H) \cong (T^\ast_oM\times M) \times T^\ast H
\end{equation}
as manifolds. As a result, it follows from Proposition \ref{prop-universal-gr} that $T^\ast(M\bowtie_\g H)$ is a cocycle double cross product for some $\Upsilon:(T^\ast_oM\times M)\times (T^\ast_oM\times M) \to T^\ast H$.

We shall now present the twisted cocycle $\Upsilon:(T^\ast_oM\times M)\times (T^\ast_oM\times M) \to T^\ast H$ explicitly. To this end, given any $(\kappa_1,m_1),(\kappa_2,m_2) \in T^\ast_oM \times M$ we first recall
\[
\Big((\kappa_1,0), (m_1,1)\Big)\Big((\kappa_2,0), (m_2,1)\Big) = \Big((\kappa_1,0)+\Ad^\ast_{(m_1,1)}(\kappa_2,0), (m_1\cdot m_2, \g(m_1,m_2))\Big).
\]
We then note from \eqref{Ad-cocycle-double-cross-prod} that, given any $(v,\eta) \in T_oM\oplus \G{h}$ and any $(\kappa,m)\in T^\ast_oM\times M$,
\begin{align*}
&\langle \Ad^\ast_{(m,1)}(\kappa,0), (v,\eta) \rangle = (\kappa,0),\Ad_{(m,1)^{-1}} (v,\eta) \rangle = \\
&  \langle \kappa, T\G{L}_{m^r} \big[\g(m,m^r)^{-1} \rt T\G{R}_m v+TL_{\g(m,m^r)^{-1}}(\eta)\rt m)\big]\rangle =\\
& \langle \Big(T^\ast\G{R}_m\big[\g(m,m^r) \overset{\ast}{\rt} T^\ast\G{L}_{m^r}(\kappa)\big], T^\ast \big(L_{\g(m,m^r)^{-1}}\circ {}_m\vp \circ \G{L}_{m^r}\big)(\kappa)\Big), (v,\eta)\rangle.
\end{align*}
Accordingly,
\begin{align*}
& \Big((\kappa_1,0), (m_1,1)\Big)\Big((\kappa_2,0), (m_2,1)\Big) = \\
& \Big(\big(\kappa_1 + T^\ast\G{R}_{m_1}\big[\g({m_1},{m_1}^r) \overset{\ast}{\rt} T^\ast\G{L}_{m_1^r}(\kappa_2)\big], T^\ast \big(L_{\g({m_1},{m_1}^r)^{-1}}\circ {}_{m_1}\vp \circ \G{L}_{m_1^r}\big)(\kappa_2)\big), (m_1\cdot m_2, \g(m_1,m_2))\Big). 
\end{align*}
On the other hand
\begin{align*}
& \Big((\kappa_1,0), (m_1,1)\Big)\Big((\kappa_2,0), (m_2,1)\Big) = (\kappa_1,m_1)(\kappa_2,m_2) = \\
& \Big((\kappa_1,m_1)\cdot (\kappa_2,m_2), \Upsilon\big((\kappa_1,m_1), (\kappa_2,m_2)\big)\Big) \in (T^\ast_oM\times M) \bowtie_\Upsilon T^\ast H.
\end{align*}
We then conclude both \eqref{Upsilon-on-T-star-MxT-star-M} and
\begin{align*}
(m_1,v_1)\cdot (m_2,v_2) = \Big(\kappa_1 + T^\ast\G{R}_{m_1}\big[\g({m_1},{m_1}^r) \overset{\ast}{\rt} T^\ast\G{L}_{m_1^r}(\kappa_2)\big], m_1\cdot m_2\Big) \in T^\ast_oM\times M,
\end{align*}
\end{proof}

\begin{remark}
The left action $\btr:T^\ast H \times (T^\ast_oM\times M) \to (T^\ast_oM\times M)$ of $T^\ast H$ onto $T^\ast_oM\times M$, and the mapping $\Sigma:T^\ast H \times (T^\ast_oM\times M) \to T^\ast H$ may be obtained as above. Indeed, given any $(\kappa,m) \in T^\ast_oM\times M$, and any $(\lambda,h) \in T^\ast H$, in the group $(T^\ast_oM\times M) \bowtie_\Upsilon T^\ast H$ we have
\begin{align*}
& (\lambda,h)(\kappa,m) = \Big((0,\lambda),(o,h)\Big)\Big((\kappa,0),(m,1)\Big) = \Big((0,\lambda) + \Ad^\ast_{(o,h)}(\kappa,0), (h\rt m, \s(h,m))\Big) = \\
& \Big((h\overset{\ast}{\rt} \kappa,\lambda) , (h\rt m, \s(h,m))\Big).
\end{align*}
Hence we obtain
\begin{align*}
& \btr:T^\ast H \times (T^\ast_oM\times M) \to (T^\ast_oM\times M), \\
& (\lambda,h)\btr (\kappa,m) = (h\overset{\ast}{\rt} \kappa,h\rt m),
\end{align*}
and
\begin{align*}
& \Sigma:T^\ast H \times (T^\ast_oM\times M) \to T^\ast H, \\
& \Sigma\big((\lambda,h), (\kappa,m)\big) = (\lambda,\s(h, m)),
\end{align*}
\end{remark}

\subsection{Hamilton's equations on a cocycle double cross product}~

Following \cite{EsenSutl16}, let us note that the Hamilton's equations for a (trivialized) Hamiltonian function $\C{H}:\G{g}^\ast \rtimes G \to \mathbb{R}$, $\C{H}=\C{H}(\mu,g)$, are given by
\begin{align}\label{HamiltonEqns}
\begin{split}
& \frac{d\mu}{dt} = -T^\ast R_g\left(\frac{\d \C{H}}{\d g}\right) + \ad^\ast_{\frac{\d \C{H}}{\d \mu}}(\mu), \\
& \frac{dg}{dt} = TR_g\left(\frac{\d \C{H}}{\d \mu}\right).
\end{split}
\end{align}

Therefore, we need the cotangent lift of the right translation, which we shall derive below. 

\begin{proposition}
Given any $(m_1,h_1),(m_2,h_2)\in M\bowtie_\g  H$, any $\a:=\alpha_{m_2\cdot(h_2 \rt m_1)}\in T^{\ast}_{m_2\cdot (h_2 \rt m_1)} M$, and any $ \b:=\beta_{\gamma(m_2,h_2 \rt m_1)\ast\s(h_2,m_1)\ast h_1}\in T^{\ast}_{\gamma(m_2,h_2 \rt m_1)\ast\s (h_2,m_1)\ast h_1} H$, we have
\begin{align*}
& T^{\ast}_{(m_2,h_2)}R_{(m_1,h_1)}(\alpha,\beta)= \\
&\hspace{.7cm}\Big(T^\ast \G{R}_{h_2\rt m_1} \a + T^\ast({}_{h_2\rt m_1}\g \circ R_{(h_2\lt m_1)\ast h_1})\b, \\
&T^\ast({}_{m_1}\vp \circ L_{m_2})\a + T^\ast({}_{m_1}\vp \circ \g_{m_2}\circ R_{(h_2\lt m_1)\ast h_1})\b + T^\ast({}_{m_1}\s\circ R_{h_1}\circ L_{\g(m_2,h_2\rt m_1)})\b \Big),
\end{align*}  
where
\begin{align*}
& \G{R}_m:M\to M, \qquad n \mapsto n\cdot m, \\
&\g_m: M\to H, \qquad n\mapsto \g(m,n), \\
&{}_m\g: M\to H, \qquad n\mapsto \g(n,m), \\
& {}_m\vp: H\to M, \qquad h\mapsto h\rt m, \\
& {}_m\s: H\to H, \qquad h\mapsto \s(h,m).
\end{align*}
\end{proposition}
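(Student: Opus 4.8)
The plan is to follow the template of the cotangent-lift-of-left-translation proposition proved above: first compute the tangent map $T_{(m_2,h_2)}R_{(m_1,h_1)}$ explicitly, and then transpose it by means of the defining pairing
\[
\langle T^{\ast}_{(m_2,h_2)}R_{(m_1,h_1)}(\a,\b),(U,S)\rangle=\langle(\a,\b),\,T_{(m_2,h_2)}R_{(m_1,h_1)}(U,S)\rangle,\qquad U\in T_{m_2}M,\ S\in T_{h_2}H.
\]
By the multiplication rule \eqref{cocycle-double-cross-prod-multp},
\[
R_{(m_1,h_1)}(m_2,h_2)=(m_2,h_2)(m_1,h_1)=\Big(m_2\cdot(h_2\rt m_1),\ \g(m_2,h_2\rt m_1)\ast\s(h_2,m_1)\ast h_1\Big),
\]
so $R_{(m_1,h_1)}$ has an $M$-component $\Phi_1(m_2,h_2)=m_2\cdot(h_2\rt m_1)=\G{L}_{m_2}({}_{m_1}\vp(h_2))$ and an $H$-component $\Phi_2(m_2,h_2)=\g_{m_2}({}_{m_1}\vp(h_2))\ast{}_{m_1}\s(h_2)\ast h_1$, the latter a triple product in $H$.

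Next I would differentiate along a curve through $(m_2,h_2)$ with velocity $(U,S)$, using the Leibniz rule for the product in $H$ together with the chain rule through the action $\vp$. Differentiating $\Phi_1$ in the $U$-direction produces $T\G{R}_{h_2\rt m_1}(U)$ (note that $m_2$ enters $m_2\cdot(h_2\rt m_1)$ directly, so no residual dual action survives here), while the $S$-direction produces the derivative of $h_2\mapsto\G{L}_{m_2}({}_{m_1}\vp(h_2))$. In $\Phi_2$ only the first slot of $\g$ depends on $m_2$, so the $U$-direction contributes a single term in which the derivative of $m_2\mapsto\g(m_2,h_2\rt m_1)$ is right-translated in $H$ by $\s(h_2,m_1)\ast h_1$; the $S$-direction hits the second slot of $\g$ (through ${}_{m_1}\vp$) and the factor $\s(h_2,m_1)$ (through ${}_{m_1}\s$), yielding two terms in which the appropriate left and right translations of $H$ --- namely $R_{\s(h_2,m_1)\ast h_1}$, $L_{\g(m_2,h_2\rt m_1)}$, $R_{h_1}$ --- pre- and post-multiply the differentiated structure maps. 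Collecting these gives $T_{(m_2,h_2)}R_{(m_1,h_1)}(U,S)$.

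Finally I would transpose: in $\langle(\a,\b),T_{(m_2,h_2)}R_{(m_1,h_1)}(U,S)\rangle$ the terms linear in $U$ assemble the first ($T^{\ast}_{m_2}M$) component of the answer and those linear in $S$ the second ($T^{\ast}_{h_2}H$) component, using $T^{\ast}(f\circ g)=T^{\ast}g\circ T^{\ast}f$ to present each slot as a pullback along a composite of translation and structure maps. I expect the main difficulty to be organizational rather than conceptual: $\Phi_2$ is a product of three factors, two of which vary with $(m_2,h_2)$ and one of those only through the chain rule via $\vp$, so carefully tracking which left and right translations in $H$ multiply each differentiated term --- and then recognizing the transposes as precisely the stated composites built from $\G{R}_{h_2\rt m_1}$, ${}_{h_2\rt m_1}\g$, ${}_{m_1}\vp$, $\g_{m_2}$, ${}_{m_1}\s$, $\G{L}_{m_2}$ and the $H$-translations $R_{h_1}$, $R_{(h_2\lt m_1)\ast h_1}$, $L_{\g(m_2,h_2\rt m_1)}$ --- is where the bookkeeping must be done with care. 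An alternative route would be to use $R_{(m_1,h_1)}=\inv\circ L_{(m_1,h_1)^{-1}}\circ\inv$ together with \eqref{inverse-cdcp} and the preceding proposition on $T^{\ast}L$, but the inversion formula \eqref{inverse-cdcp} is cumbersome enough that the direct computation is the cleaner option.
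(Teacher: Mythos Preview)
Your approach is correct and is exactly the one the paper takes: the paper's proof consists of nothing more than the single pairing identity
\[
\langle T^{\ast}_{(m_2,h_2)}R_{(m_1,h_1)}(\alpha,\beta),(U,S)\rangle=\langle(\alpha,\beta),\,T_{(m_2,h_2)}R_{(m_1,h_1)}(U,S)\rangle,
\]
with the differentiation and bookkeeping you outline left implicit. Your sketch in fact supplies considerably more detail than the paper does, and your assessment that the alternative route through \eqref{inverse-cdcp} and the $T^\ast L$ result would be messier is sound.
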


\begin{proof}
Given any $U := U_{m_2}\in T_{m_2}M$, and any $S := S_{h_2} \in T_{h_2}H$, once again the claim follows from the pairing
\[
\langle T^{\ast}_{(m_2,h_2)}R_{(m_1,h_1)}(\alpha,\beta), (U,S)\rangle = \langle (\alpha,\beta), T_{(m_2,h_2)}R_{(m_1,h_1)}(U,S)\rangle.
\]
\end{proof}

In particular, for $(m_2,h_2)=(o,1)$, we arrive at the following. 

\begin{corollary}
Given any $\a:=\alpha_m\in T^{\ast}_m M$, and any $ \b:=\beta_h\in T^{\ast}_h H$,
\begin{equation}\label{T^*(e,1)-right}
T^{\ast}R_{(m,h)}(\alpha,\beta)=\Big(T^\ast \G{R}_m \a + T^\ast({}_m\g \circ R_h)\b, T^\ast({}_m\vp)\a + T^\ast({}_m\s\circ R_h)\b \Big).
\end{equation}  
\end{corollary}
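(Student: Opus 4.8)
The plan is to obtain \eqref{T^*(e,1)-right} simply as the specialization of the preceding Proposition to the case $(m_2,h_2)=(o,1)$, followed by the relabeling $(m_1,h_1)\rightsquigarrow(m,h)$. First I would verify that the base points occurring in that Proposition collapse correctly under this substitution. Using the unital property $1\rt m=m$ of the $H$-action, the normalization $\s(1,m)=1$ and $\g(o,m)=1$ from \eqref{normalization-gr} and \eqref{theta-map-gr-e}, and $o\cdot m=m$ from \eqref{phi-map-gr-e}, one gets $m_2\cdot(h_2\rt m_1)=m$ and $\g(m_2,h_2\rt m_1)\ast\s(h_2,m_1)\ast h_1=h$. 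Hence the covectors in the Proposition become exactly $\a=\alpha_m\in T^\ast_m M$ and $\b=\beta_h\in T^\ast_h H$, matching the hypotheses of the Corollary, and $T^\ast_{(m_2,h_2)}R_{(m_1,h_1)}$ becomes $T^\ast R_{(m,h)}$ in the Corollary's notation.

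Next I would simplify the summands of the general formula one at a time under this specialization. For the first component: $\G{R}_{h_2\rt m_1}=\G{R}_{1\rt m}=\G{R}_m$, and since $\s(h_2,m_1)=\s(1,m)=1$ one has $R_{\s(h_2,m_1)\ast h_1}=R_h$ and ${}_{h_2\rt m_1}\g={}_m\g$; this reproduces $T^\ast\G{R}_m\a+T^\ast({}_m\g\circ R_h)\b$. For the second component, the factor $\G{L}_{m_2}=\G{L}_o=\Id_M$ eliminates the left-translation insertion in the first term, leaving $T^\ast({}_m\vp)\a$. The one genuinely nontrivial point is the middle term $T^\ast({}_{m_1}\vp\circ\g_{m_2}\circ R_{\s(h_2,m_1)\ast h_1})\b$: here $\g_{m_2}=\g_o$ is, by \eqref{theta-map-gr-e}, the \emph{constant} map $n\mapsto\g(o,n)=1$, so its derivative vanishes identically and this entire term drops out. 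Finally, in the last term $\g(m_2,h_2\rt m_1)=\g(o,m)=1$, so $L_{\g(m_2,h_2\rt m_1)}=\Id_H$ and $R_{h_1}=R_h$, giving $T^\ast({}_m\s\circ R_h)\b$. Assembling the three surviving contributions yields the second component $T^\ast({}_m\vp)\a+T^\ast({}_m\s\circ R_h)\b$, which is the claim.

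Because all the chain-rule bookkeeping of which left and right translations appear, and the pairing argument $\langle T^\ast R_{(m,h)}(\a,\b),(U,S)\rangle=\langle(\a,\b),TR_{(m,h)}(U,S)\rangle$, are already done in the Proposition, there is no substantive obstacle; the only things to watch are that one really does need all the normalization axioms to see both that the stated covectors sit over the correct base points and that the cotangent lift of the now-constant map $\g_o$ disappears. Alternatively one could prove the Corollary from scratch by differentiating the multiplication \eqref{cocycle-double-cross-prod-multp} at $(o,1)$ in the pairing above for $U\in T_oM$ and $S\in\G{h}$, but re-using the Proposition is cleaner and avoids repeating that computation.
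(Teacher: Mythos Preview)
Your proposal is correct and follows exactly the paper's approach: the paper simply states ``In particular, for $(m_2,h_2)=(o,1)$, we arrive at the following'' with no further detail, and you carry out precisely that specialization, supplying the normalization checks the paper leaves implicit. The one nontrivial observation you flag---that $\g_{m_2}=\g_o$ becomes constant so its cotangent lift vanishes---is indeed the reason the second component has only two terms rather than three.
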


We are, thus, all set to derive the Hamilton's equations.

\begin{proposition}
Given a Hamiltonian $\C{H}=\C{H}\left(\kappa,\lambda,m,h\right)$
defined on $(T_o M\bowtie_{\t} \mathfrak{h})^\ast \rtimes (M\bowtie_\g  H)$, the (trivialized) Hamilton's equations are given by
\begin{align}
&\frac{d \kappa}{dt} =-T^\ast \G{R}_m \left(\frac{\delta\mathcal{H}}{\delta m}\right) - T^\ast({}_m\g \circ R_h)\left(\frac{\delta\mathcal{H}}{\delta h}\right) + \G{ad}^\ast_{\frac{\delta\mathcal{H}}{\delta \kappa}}(\kappa)-\kappa\overset{\ast}{\lt} \left(\frac{\delta\mathcal{H}}{\delta \lambda}\right) -\G{a}_{\frac{\delta\mathcal{H}}{\delta \lambda}}^{\ast}(\lambda)- \theta^{\ast}_{\frac{\delta\mathcal{H}}{\delta \kappa}}(\lambda), \label{HamEq-I}\\
& \frac{d \lambda}{dt} =-T^\ast({}_m\vp)\left(\frac{\delta\mathcal{H}}{\delta m}\right) - T^\ast({}_m\s\circ R_h)\left(\frac{\delta\mathcal{H}}{\delta h}\right) +\ad_{\frac{\delta\mathcal{H}}{\delta \lambda}}^{\ast}\lambda+\mathfrak{b}_{\frac{\delta\mathcal{H}}{\delta \kappa}}^{\ast}(\kappa)+{}_{\frac{\delta\mathcal{H}}{\delta \kappa}}\psi^{\ast}(\lambda),  \label{HamEq-II}\\
&\frac{dm}{dt} =T \G{R}_m\left(\frac{\delta\mathcal{H}}{\delta\kappa}\right) + \left(\frac{\delta\mathcal{H}}{\delta\lambda}\right)\rt m, \label{HamEq-III} \\
& \frac{dh}{dt} =TR_h[T({}_m\gamma)\left(\frac{\delta\mathcal{H}}{\delta\kappa}\right) +T({}_m\s)\left(\frac{\delta\mathcal{H}}{\delta\lambda}\right)]. \label{HamEq-IV}
\end{align}
\end{proposition}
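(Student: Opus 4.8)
The plan is to specialize the general Hamilton's equations \eqref{HamiltonEqns} to the configuration group $G = M\bowtie_\g H$, using the decomposition $\G{g}^\ast = T^\ast_oM \oplus \G{h}^\ast$ coming from the right trivialization $T^\ast G \cong \G{g}^\ast \rtimes G$. Writing $\mu = (\kappa,\lambda) \in T^\ast_oM\oplus\G{h}^\ast$ and $g = (m,h)\in M\times H$, the two vector equations in \eqref{HamiltonEqns} split into four, and the task reduces to computing the three ingredients appearing on their right-hand sides: the cotangent-lifted right translation $T^\ast R_{(m,h)}$ applied to $(\delta\C{H}/\delta m,\delta\C{H}/\delta h)$, the coadjoint action $\ad^\ast_{(v,\eta)}$ applied to $(\kappa,\lambda)$, and the tangent-lifted right translation $TR_{(m,h)}$ applied to $(\delta\C{H}/\delta\kappa,\delta\C{H}/\delta\lambda)$.

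First I would invoke the Corollary giving \eqref{T^*(e,1)-right}, namely $T^\ast R_{(m,h)}(\a,\b) = \bigl(T^\ast\G{R}_m\a + T^\ast({}_m\g\circ R_h)\b,\ T^\ast({}_m\vp)\a + T^\ast({}_m\s\circ R_h)\b\bigr)$, with $\a = \delta\C{H}/\delta m$ and $\b = \delta\C{H}/\delta h$; this immediately produces the first two terms of \eqref{HamEq-I} and the first two terms of \eqref{HamEq-II}. Second, I would use the coadjoint-action Proposition, equation \eqref{coadact}, with $(v,\eta) = (\delta\C{H}/\delta\kappa,\delta\C{H}/\delta\lambda)$ and $(\a,\b) = (\kappa,\lambda)$: this contributes $\G{ad}^\ast_{\delta\C{H}/\delta\kappa}(\kappa) - \kappa\,\overset{\ast}{\lt}\,(\delta\C{H}/\delta\lambda) - \G{a}^\ast_{\delta\C{H}/\delta\lambda}(\lambda) - \t^\ast_{\delta\C{H}/\delta\kappa}(\lambda)$ to the $\kappa$-equation and $\ad^\ast_{\delta\C{H}/\delta\lambda}\lambda + \G{b}^\ast_{\delta\C{H}/\delta\kappa}(\kappa) + {}_{\delta\C{H}/\delta\kappa}\psi^\ast(\lambda)$ to the $\lambda$-equation, with the overall sign from the $+\ad^\ast_{\delta\C{H}/\delta\mu}(\mu)$ term in \eqref{HamiltonEqns}. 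Third, for the configuration equations I would dualize \eqref{T^*(e,1)-right}, or more directly differentiate the group multiplication \eqref{cocycle-double-cross-prod-multp}, to read off $TR_{(m,h)}$ on $T_oM\oplus\G{h}$; the $M$-component of $(m,h)(v_t,\eta_t)$ at $t=0$ gives $T\G{R}_m(\delta\C{H}/\delta\kappa) + (\delta\C{H}/\delta\lambda)\rt m$, yielding \eqref{HamEq-III}, while the $H$-component, after applying $\g(m, h\rt v_t)$, $\s(h,\eta_t\cdot\,\text{part})$ and right-multiplying by $h$, gives $TR_h\bigl[T({}_m\g)(\delta\C{H}/\delta\kappa) + T({}_m\s)(\delta\C{H}/\delta\lambda)\bigr]$, yielding \eqref{HamEq-IV}.

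I expect the main obstacle to be the bookkeeping in the fourth equation \eqref{HamEq-IV}: unlike the Lagrangian side, where the relevant translation is already packaged in \eqref{T^*(e,1)}, here one must carefully linearize the second slot of \eqref{cocycle-double-cross-prod-multp} in the direction of a curve $(v_t,\eta_t)$ through $(o,1)$, keeping track of which variable enters $\g$, which enters $\s$, and how the right translation by $h$ and the normalizations \eqref{normalization-gr}--\eqref{theta-map-gr-e} collapse the spurious terms (e.g. $\g(m,o)=1$, $\s(1,m)=1$). The derivative $T({}_m\g)$ is the tangent map of $n\mapsto\g(m,n)$ at $n=o$ and $T({}_m\s)$ that of $h\mapsto\s(h,m)$ restricted appropriately; confirming that exactly these two maps survive — and no cross terms involving $\t$ or $\g$ at generic points — is the delicate point. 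Everything else is a mechanical substitution, so once the four right-hand sides are assembled the proof concludes by simply matching components.

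\begin{proof}
Appealing to the right trivialization $T^\ast(M\bowtie_\g H)\cong (T^\ast_oM\oplus\G{h}^\ast)\rtimes(M\bowtie_\g H)$, we write $\mu=(\kappa,\lambda)$ and $g=(m,h)$, so that \eqref{HamiltonEqns} becomes
\[
\frac{d}{dt}(\kappa,\lambda) = -T^\ast R_{(m,h)}\left(\frac{\d\C{H}}{\d m},\frac{\d\C{H}}{\d h}\right) + \ad^\ast_{\left(\frac{\d\C{H}}{\d\kappa},\frac{\d\C{H}}{\d\lambda}\right)}(\kappa,\lambda),\qquad \frac{d}{dt}(m,h) = TR_{(m,h)}\left(\frac{\d\C{H}}{\d\kappa},\frac{\d\C{H}}{\d\lambda}\right).
\]
For the first summand on the right of the $(\kappa,\lambda)$-equation, \eqref{T^*(e,1)-right} with $\a=\d\C{H}/\d m$ and $\b=\d\C{H}/\d h$ gives
\[
T^\ast R_{(m,h)}\left(\frac{\d\C{H}}{\d m},\frac{\d\C{H}}{\d h}\right) = \Big(T^\ast\G{R}_m\left(\frac{\d\C{H}}{\d m}\right) + T^\ast({}_m\g\circ R_h)\left(\frac{\d\C{H}}{\d h}\right),\ T^\ast({}_m\vp)\left(\frac{\d\C{H}}{\d m}\right) + T^\ast({}_m\s\circ R_h)\left(\frac{\d\C{H}}{\d h}\right)\Big).
\]
For the second summand, \eqref{coadact} with $(v,\eta)=(\d\C{H}/\d\kappa,\d\C{H}/\d\lambda)$ and $(\a,\b)=(\kappa,\lambda)$ yields
\[
\ad^\ast_{\left(\frac{\d\C{H}}{\d\kappa},\frac{\d\C{H}}{\d\lambda}\right)}(\kappa,\lambda) = \Big(\G{ad}^\ast_{\frac{\d\C{H}}{\d\kappa}}(\kappa) - \kappa\overset{\ast}{\lt}\frac{\d\C{H}}{\d\lambda} - \G{a}^\ast_{\frac{\d\C{H}}{\d\lambda}}(\lambda) - \t^\ast_{\frac{\d\C{H}}{\d\kappa}}(\lambda),\ \ad^\ast_{\frac{\d\C{H}}{\d\lambda}}\lambda + \G{b}^\ast_{\frac{\d\C{H}}{\d\kappa}}(\kappa) + {}_{\frac{\d\C{H}}{\d\kappa}}\psi^\ast(\lambda)\Big).
\]
Combining the two displays, and reading off the $T^\ast_oM$- and $\G{h}^\ast$-components separately, produces exactly \eqref{HamEq-I} and \eqref{HamEq-II}.

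For the configuration equations, we differentiate \eqref{cocycle-double-cross-prod-multp} along a curve $(v_t,\eta_t)\in M\bowtie_\g H$ through $(o,1)$ with $v_0'=\d\C{H}/\d\kappa\in T_oM$ and $\eta_0'=\d\C{H}/\d\lambda\in\G{h}$:
\[
(m,h)(v_t,\eta_t) = \Big(m\cdot(h\rt v_t),\ \g(m,h\rt v_t)\ast\s(h,v_t)\ast h\Big).
\]
Differentiating the first slot at $t=0$ and using $h\rt o=o$, $o\cdot m=m$, together with \eqref{left-action-gr-on-multp} at the level of tangent vectors, gives
\[
\frac{dm}{dt} = T\G{R}_m\left(\frac{\d\C{H}}{\d\kappa}\right) + \left(\frac{\d\C{H}}{\d\lambda}\right)\rt m,
\]
which is \eqref{HamEq-III}. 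Differentiating the second slot at $t=0$ and invoking the normalizations $\g(m,o)=1$, $\s(h,o)=h$, $\s(1,m)=1$, all terms but those through $\g(m,-)$ and $\s(-,m)$ vanish, and right translation by $h$ factors out to give
\[
\frac{dh}{dt} = TR_h\Big[T({}_m\g)\left(\frac{\d\C{H}}{\d\kappa}\right) + T({}_m\s)\left(\frac{\d\C{H}}{\d\lambda}\right)\Big],
\]
which is \eqref{HamEq-IV}. This completes the proof.
\end{proof}
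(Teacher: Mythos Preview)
Your approach matches the paper's exactly: split \eqref{HamiltonEqns} into components, plug in \eqref{T^*(e,1)-right} and \eqref{coadact} for the momentum equations, and compute $TR_{(m,h)}$ at the identity for the configuration equations. The first two equations \eqref{HamEq-I}--\eqref{HamEq-II} are obtained correctly.

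There is, however, a genuine slip in your derivation of \eqref{HamEq-III}--\eqref{HamEq-IV}. The tangent of \emph{right} translation at the identity is obtained by differentiating $t\mapsto (v_t,\eta_t)(m,h)$, not $t\mapsto (m,h)(v_t,\eta_t)$ as you wrote; the latter is $TL_{(m,h)}$. Concretely, from \eqref{cocycle-double-cross-prod-multp},
\[
(v_t,\eta_t)(m,h) = \Big(v_t\cdot(\eta_t\rt m),\ \g(v_t,\eta_t\rt m)\ast\s(\eta_t,m)\ast h\Big),
\]
and it is this expression that, upon differentiating and using $1\rt m=m$, $o\cdot m=m$, $\g(o,m)=1$, $\s(1,m)=1$, yields
\[
TR_{(m,h)}(U,S) = \Big(T\G{R}_m(U) + S\rt m,\ TR_h\big[T({}_m\g)(U) + T({}_m\s)(S)\big]\Big),
\]
exactly as the paper records. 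If instead you differentiate your displayed product $m\cdot(h\rt v_t)$, the $M$-component is $T\G{L}_m(h\rt v)$ with no $\eta\rt m$ term, and the $H$-component likewise comes out wrong (note also that in your display the last factor should be $\eta_t$, not $h$). The normalizations you invoke, $\g(m,o)=1$ and $\s(h,o)=h$, are precisely the ones relevant to left translation, confirming that the product order was reversed. Swapping the order of the factors fixes everything, and the remainder of your argument then coincides with the paper's.
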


\begin{proof}
Given $\mathcal{H}=\mathcal{H}(\kappa,\lambda,m,h)$, it follows from \eqref{HamiltonEqns} that the Hamilton's equations are
\begin{align*}
&\frac{d}{dt}(\kappa ,\lambda) =-T^\ast R_{(m,h)}\left(\frac{\delta\mathcal{H}}{\delta m}, \frac{\delta\mathcal{H}}{\delta h}\right) + \ad^{\ast}_{(\frac{\delta\mathcal{H}}{\delta\kappa}, \frac{\delta\mathcal{H}}{\delta\lambda})}(\kappa,\lambda), \\
&\frac{d}{dt} (m,h)=TR_{(m,h)}\left(\frac{\delta\mathcal{H}}{\delta\kappa}, \frac{\delta\mathcal{H}}{\delta\lambda}\right).
\end{align*}
As for the former, we substitute \eqref{coadact} and \eqref{T^*(e,1)-right} to get \eqref{HamEq-I} and \eqref{HamEq-II}. We next use
\[
TR_{(m,h)}(U,S) = \Big(T \G{R}_m(U) + S\rt m, TR_h\big[T({}_m\gamma)(U) +T({}_m\s)(S)\big]\Big)
\]
given any $U := U_o\in T_oM$, and any $S := S_1 \in T_1H$, to obtain \eqref{HamEq-III} and \eqref{HamEq-IV}.
\end{proof}
 
Along the lines of the previous subsection we proceed with the reduction of $T^\ast(M\bowtie_\g  H) \cong (T^\ast_oM \times M) \times T^\ast H$ by $T^\ast H$, which yields the Hamilton type equations on the cotangent bundle $T^\ast M \cong T^\ast(M\bowtie_\g  H) / T^\ast H \cong T^\ast_oM \times M$ of the quotient $M=G/H$. More precisely, we have the following.

\begin{corollary}
The Hamilton type equations on $T^\ast M \cong T^\ast_oM \times M$, associated to a (reduced) Hamilton function $\C{H}=\C{H}\left(\kappa,m\right)$ on $T^\ast_oM \times M$ are given by
\begin{align}
&\frac{d \kappa}{dt} =-T^\ast \G{R}_m \left(\frac{\delta\mathcal{H}}{\delta m}\right)  + \G{ad}^\ast_{\frac{\delta\mathcal{H}}{\delta \kappa}}(\kappa), \\
&\frac{dm}{dt} =T \G{R}_m\left(\frac{\delta\mathcal{H}}{\delta\kappa}\right) . 
\end{align}
\end{corollary}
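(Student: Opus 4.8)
The plan is to deduce the statement directly from the trivialized Hamilton's equations \eqref{HamEq-I}--\eqref{HamEq-IV} on $(T_oM\bowtie_\t\G{h})^\ast\rtimes(M\bowtie_\g H)$, in exact parallel with the way the Euler--Lagrange type equations on $TM$ were extracted from \eqref{mEL}--\eqref{mEL-II}. The structural facts I would invoke are the ones already established: the manifold identification $T^\ast(M\bowtie_\g H)\cong(T^\ast_oM\times M)\times T^\ast H$, the realization of $T^\ast H$ as a subgroup of $T^\ast(M\bowtie_\g H)$, and the resulting identification of the right-coset space $T^\ast(M\bowtie_\g H)/T^\ast H$ with $T^\ast_oM\times M\cong T^\ast M=T^\ast(G/H)$, which is modelled by the slice carrying the identity of the $T^\ast H$-factor, that is, $\lambda=0$ and $h=1$ in the trivialized coordinates $(\kappa,\lambda,m,h)$.

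With this set-up, a reduced Hamiltonian $\C{H}=\C{H}(\kappa,m)$ is a $T^\ast H$-invariant function on $T^\ast(M\bowtie_\g H)$; in coordinates this is the ansatz $\delta\C{H}/\delta\lambda=0$ and $\delta\C{H}/\delta h=0$, with the reduced trajectory recorded along $\lambda=0$, $h=1$. Feeding this into \eqref{HamEq-I}--\eqref{HamEq-IV}, I would first set aside \eqref{HamEq-II} and \eqref{HamEq-IV}: these govern $\dot\lambda$ and $\dot h$, i.e. the motion in the $T^\ast H$-direction, and are no part of the reduced system, just as \eqref{mEL-II} plays no role in the $TH$-reduction on the Lagrangian side. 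In \eqref{HamEq-I}, the term $T^\ast({}_m\g\circ R_h)(\delta\C{H}/\delta h)$ vanishes because $\delta\C{H}/\delta h=0$; the terms $\kappa\overset{\ast}{\lt}(\delta\C{H}/\delta\lambda)$ and $\G{a}^\ast_{\delta\C{H}/\delta\lambda}(\lambda)$ vanish because $\delta\C{H}/\delta\lambda=0$ (and $\G{a}_\eta$ depends linearly on $\eta$); and $\theta^\ast_{\delta\C{H}/\delta\kappa}(\lambda)$ vanishes because $\lambda=0$ and $\theta^\ast_v$ is linear. What survives is $\dot\kappa=-T^\ast\G{R}_m(\delta\C{H}/\delta m)+\G{ad}^\ast_{\delta\C{H}/\delta\kappa}(\kappa)$. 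Similarly, in \eqref{HamEq-III} the term $(\delta\C{H}/\delta\lambda)\rt m$ vanishes, leaving $\dot m=T\G{R}_m(\delta\C{H}/\delta\kappa)$, which is the claimed pair of equations.

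The part requiring genuine care is not the bookkeeping above but the justification that this prescription — freezing the $T^\ast H$-coordinates at the identity under the reduced-Hamiltonian ansatz — actually yields the Hamiltonian dynamics on the quotient $T^\ast M$. The delicate point is that the slice $\{\lambda=0,\ h=1\}$ is not invariant under the unreduced flow, since the right-hand sides of \eqref{HamEq-II} and \eqref{HamEq-IV} need not vanish there; the legitimate argument is therefore that of cotangent-bundle reduction of $T^\ast(M\bowtie_\g H)$ by the subgroup $T^\ast H$ (equivalently, the partial Lie--Poisson reduction of $(T_oM\bowtie_\t\G{h})^\ast\rtimes(M\bowtie_\g H)$ along $T^\ast H$), together with well-definedness of the induced Poisson structure on $T^\ast_oM\times M$. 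This is entirely parallel to the Euler--Lagrange/Euler--Poincar\'e reductions carried out above, and one may invoke the general reduction theory \cite{MaWe74,MaMiOrPeRa07,MaRa86} for the rigorous statement; the computation then merely identifies the reduced equations in the present coordinates.
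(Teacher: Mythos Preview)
Your proposal is correct and follows essentially the same approach as the paper: the paper presents the corollary as an immediate consequence of the $T^\ast H$-reduction of $T^\ast(M\bowtie_\g H)\cong (T^\ast_oM\times M)\times T^\ast H$, in exact parallel with the $TH$-reduction on the Lagrangian side, and offers no further proof. Your derivation simply makes explicit the bookkeeping (setting $\delta\C{H}/\delta\lambda=0$, $\delta\C{H}/\delta h=0$, $\lambda=0$, $h=1$ in \eqref{HamEq-I}--\eqref{HamEq-IV}) and, if anything, is more careful than the paper in flagging that the slice is not flow-invariant and that the legitimacy of the procedure rests on the standard reduction theory.
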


We next aim to present the Lie-Poisson equations on the dual space $T^\ast_oM \oplus \G{h}^\ast$ through the reduction of $T^\ast(M\bowtie_\g  H) \cong (T^\ast_oM \oplus \G{h}^\ast)\rtimes (M\bowtie_\g  H)$ by $M\bowtie_\g  H$.

\begin{corollary}
Given a (reduced) Hamiltonian $\G{H}=\G{H}(\kappa,\lambda)$ on $T^\ast_oM\oplus \G{h}^\ast$, the Lie-Poisson equations are given by 
 \begin{align}
&\frac{d \kappa}{dt} =  \G{ad}^\ast_{\frac{\delta\G{H}}{\delta \kappa}}(\kappa)-\kappa\overset{\ast}{\lt} \left(\frac{\delta\G{H}}{\delta \lambda}\right) -\G{a}_{\frac{\delta\G{H}}{\delta \lambda}}^{\ast}(\lambda)- \theta^{\ast}_{\frac{\delta\G{H}}{\delta \kappa}}(\lambda), \label{LiePoissonEqn-I}\\
& \frac{d \lambda}{dt} =\ad_{\frac{\delta\G{H}}{\delta \lambda}}^{\ast}\lambda+\mathfrak{b}_{\frac{\delta\G{H}}{\delta \kappa}}^{\ast}(\kappa)+{}_{\frac{\delta\G{H}}{\delta \kappa}}\psi^{\ast}(\lambda) . \label{LiePoissonEqn-II}
\end{align}
\end{corollary}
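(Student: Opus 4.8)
The plan is to obtain the claimed equations as a direct specialization of the Hamilton's equations \eqref{HamEq-I}--\eqref{HamEq-IV} (equivalently, of the general Lie-Poisson equation \eqref{LP-Eqns}) to the case of a Hamiltonian that is invariant under the $M\bowtie_\g H$-action. First I would recall that, under the right trivialization \eqref{right-triv-T-star-MH}, the right $M\bowtie_\g H$-action on $T^\ast(M\bowtie_\g H)$ amounts to translation in the group factor $M\bowtie_\g H$, so that the reduced phase space is precisely $T^\ast(M\bowtie_\g H)/(M\bowtie_\g H)\cong T^\ast_oM\oplus\G{h}^\ast=(T_oM\bowtie_\t\G{h})^\ast$, and that an invariant Hamiltonian descends to $\G{H}=\G{H}(\kappa,\lambda)$ with no dependence on $(m,h)$.

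Next I would appeal to the Lie-Poisson reduction theorem recalled in the introduction: the reduction of the canonical dynamics on $T^\ast G$ by the $G$-action produces on $\G{g}^\ast$ the equation $\dot\mu=\ad^\ast_{\delta\G{H}/\delta\mu}(\mu)$, which is \eqref{LP-Eqns}. Applying this with $G=M\bowtie_\g H$, $\G{g}=T_oM\bowtie_\t\G{h}$, and $\mu=(\kappa,\lambda)\in T^\ast_oM\oplus\G{h}^\ast$, it only remains to insert the explicit coadjoint action \eqref{coadact}. Writing $\delta\G{H}/\delta\mu=(\delta\G{H}/\delta\kappa,\ \delta\G{H}/\delta\lambda)$ and reading off the two components of
\[
\ad^\ast_{(\delta\G{H}/\delta\kappa,\ \delta\G{H}/\delta\lambda)}(\kappa,\lambda)
\]
from \eqref{coadact} yields precisely \eqref{LiePoissonEqn-I}--\eqref{LiePoissonEqn-II}.

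Equivalently --- and this is the most economical route, consistent with the argument used for the preceding corollaries --- one simply sets $\delta\C{H}/\delta m=0$ and $\delta\C{H}/\delta h=0$ in the Hamilton's equations proposition: then \eqref{HamEq-I} and \eqref{HamEq-II} collapse to \eqref{LiePoissonEqn-I} and \eqref{LiePoissonEqn-II}, while \eqref{HamEq-III}--\eqref{HamEq-IV} become the reconstruction equations, which are discarded upon passing to the quotient. There is no genuine obstacle here; the only point deserving brief care is the identification of the reduced Poisson structure, i.e. that the $M\bowtie_\g H$-reduction of the canonical structure on $T^\ast(M\bowtie_\g H)$ is the Lie-Poisson structure on $(T_oM\bowtie_\t\G{h})^\ast$ with the sign convention matching \eqref{coadact} --- but this is exactly the content of the Lie-Poisson reduction theorem already invoked for \eqref{LP-Eqns}, so the verification is immediate.
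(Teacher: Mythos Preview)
Your proposal is correct and follows essentially the same approach as the paper: the corollary is stated without proof, as the immediate consequence of reducing \eqref{HamEq-I}--\eqref{HamEq-IV} by $M\bowtie_\g H$, i.e.\ specializing to a Hamiltonian independent of $(m,h)$ and invoking the coadjoint action formula \eqref{coadact}. Both your routes---substituting into \eqref{LP-Eqns} via \eqref{coadact}, or setting $\delta\C{H}/\delta m=\delta\C{H}/\delta h=0$ in the Hamilton's equations---are exactly what the paper intends.
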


\section{Higher Order Dynamics Through Cocycle Double Cross Product Lie Groups}

\subsection{$n$th tangent group of a Lie group}~

Given a manifold $Q$, the $n$th order tangent bundle $\tau^n:T^nQ\to Q$ is defined to be the equivalence classes of curves which identifies two curves with the same $n$th order derivatives at 0, see for instance \cite{abrunheiro2011cubic, colombo2011geometry, colombo2013optimal, colombo2014unified, gay2012invariant,gay2011higher}, where the bundle projection being
\[
\tau^n:T^nQ\to Q, \qquad \tau^n([p]) := p(0)
\]
given the curve $p(t)\in Q$, of class $[p]$. 

Just like the first order tangent group of a Lie group, the $n$th order tangent bundle $T^nG$ of a Lie group $G$ has the structure of a Lie group, see for instance \cite{gay2012invariant,KolaMichSlov-book,Vizm13}, with the Lie group structure given by
\[
[p][q] := [pq].	
\] 
More precisely, given a curve $p(t)\in G$, let $\d^\ell p := p^{-1}p'$ denotes the \emph{left logarithmic derivative}. Accordingly, 
\[
tr^L_{T^nG}:T^nG\to G\times \G{g}^{\times\, n}, \qquad  [p] \mapsto \Big(p(0), (\d^\ell p)(0), (\d^\ell p)'(0), \ldots, (\d^\ell p)^{(n-1)}(0)\Big)
\]
induces the left trivialization of $T^nG$, \cite{Vizm13}.

Following \cite[Rk. 2.8]{Vizm13}, the (left trivialized) group structure of $T^nG$ may then be presented as
\begin{equation} \label{GrTnG}
(x,\xi_1,\xi_2, \ldots, \xi_n) (y,\zeta_1,\zeta_2,\ldots, \zeta_n) = (xy, \rho_1, \rho_2, \ldots, \rho_n),
\end{equation}
where
\[
\rho_k:= \zeta_k + \sum_{i_1+\ldots+i_\ell = k}\,(-1)^{\ell-1}\,N_{(i_1,\ldots,i_\ell)}\ad_{\zeta_{i_{\ell-1}}}\ldots \ad_{\zeta_{i_1}}\Ad_{y^{-1}}\xi_{i_\ell},
\]
while
\[
N_{(i_1,\ldots,i_\ell)} := \binom{i_1+\ldots+i_\ell - 1}{i_\ell - 1}\binom{i_1+\ldots+i_{\ell-1} - 1}{i_{\ell-1} - 1} \ldots \binom{i_1+i_2 - 1}{i_2 - 1}
\]
is to be the number of anti-lexicographically ordered partitions of $\{1,2,\ldots,k\}$ with cardinalities $i_1,\ldots,i_\ell$.

The multiplication in $T^2G$ has already been considered in \cite{EsenKudeSutl21}. The multiplication in $T^3G$, on the other hand, is
\begin{align} \label{GrT3G}
\begin{split}
& (x,\xi_1,\xi_2,\xi_3) (y,\zeta_1,\zeta_2,\zeta_3) = \\
& \Big(xy, \zeta_1+\Ad_{y^{-1}}\xi_1, \zeta_2+\Ad_{y^{-1}}\xi_2 -\ad_{\zeta_1}\Ad_{y^{-1}}\xi_1,\\
& \hspace{1cm} \zeta_3 + \Ad_{y^{-1}}\xi_3 - 2\ad_{\zeta_1}\Ad_{y^{-1}}\xi_2 - \ad_{\zeta_2}\Ad_{y^{-1}}\xi_1 + \ad^2_{\zeta_1}\Ad_{y^{-1}}\xi_1\Big),
\end{split}
\end{align}
while the multiplication in $T^4G$ is given by
\begin{align} \label{GrT4G}
\begin{split}
& (x,\xi_1,\xi_2,\xi_3,\xi_4) (y,\zeta_1,\zeta_2,\zeta_3, \zeta_4) = \\
& \Big(xy, \zeta_1+\Ad_{y^{-1}}\xi_1, \zeta_2+\Ad_{y^{-1}}\xi_2 -\ad_{\zeta_1}\Ad_{y^{-1}}\xi_1,\\
& \hspace{1cm} \zeta_3 + \Ad_{y^{-1}}\xi_3 - 2\ad_{\zeta_1}\Ad_{y^{-1}}\xi_2 - \ad_{\zeta_2}\Ad_{y^{-1}}\xi_1 + \ad^2_{\zeta_1}\Ad_{y^{-1}}\xi_1, \\
& \zeta_4 +\Ad_{y^{-1}}\xi_4 -  3\ad_{\zeta_1}\Ad_{y^{-1}}\xi_3  - 3\ad_{\zeta_2}\Ad_{y^{-1}}\xi_2 + 3 \ad^2_{\zeta_1}\Ad_{y^{-1}}\xi_2 + \\
& \hspace{1cm} 2 \ad_{\zeta_2}\ad_{\zeta_1}\Ad_{y^{-1}}\xi_1 + \ad_{\zeta_1}\ad_{\zeta_2}\Ad_{y^{-1}}\xi_1 - \ad_{\zeta_3}\Ad_{y^{-1}}\xi_1 - \ad^3_{\zeta_1}\Ad_{y^{-1}}\xi_1\Big).
\end{split}
\end{align}
Accordingly, the unit element is $(e,0,\ldots,0) \in G \times  \G{g}^{\times\,n}$, and 
\[
(x,\xi_1,\ldots,\xi_n)^{-1} = (x^{-1}, \omega_1, \ldots, \omega_n),
\]
where
\[
\omega_k:= - \sum_{i_1+\ldots+i_\ell = k}\,N_{(i_1,\ldots,i_\ell)}\,\Ad_{x}\ad_{\xi_{i_1}}\ldots \ad_{\xi_{i_{\ell-1}}}\xi_{i_\ell}.
\]
In order to be able to distinguish the identical copies of the Lie algebra $\G{g}$, we shall make use of the notation $G \times \G{g}^{(1)} \times  \ldots \times \G{g}^{(n)}$.

\subsection{$n$th iterated tangent group of a Lie group}~

Let now 
\[
T^{(n)}G := \underset{n-many}{\underbrace{T(T(T\ldots (T}}G)))
\]
denotes the iterated tangent bundle of the group $G$. It, then, follows sat once from the left trivialization $tr^L_{TG}:TG\to G\ltimes \G{g}$ that we have the (iterated) left trivialization
\[
tr^L_{T^{(n)}G}: T^{(n)}G \to G \times \G{g}^{\times\,2^n-1} := G \times \G{g}^{(1)} \times  \ldots \times \G{g}^{(2^n-1)}.
\]
The group structure of the (left trivialized) $T^{(n)}G$ was also given in \cite{Vizm13}. Adopting a similar notation therein, let $I_n$ be the set of all anti-lexicographically ordered subsets of $\{1,2,\ldots,n\}$, and let $I_n^\ast := I_n - \{\emptyset\}$. Let also a sequence $(X_\a)_{\a \in I_n^\ast}$ in $\G{g}$ be ordered lexicographically with respect to indexes. Accordingly, 
\[
T^{(n)}G = \{(x,(X_\a)_{\a \in I_n^\ast}) \mid x\in G,\,\,X_\a \in \G{g}\}.
\]
Labeling the identical copies of the Lie algebra $\G{g}$ by the lexicographically ordered anti-lexicographical subsets of $\{1,2,\ldots,n\}$, in lower dimensions we have
\begin{align*}
& T^{(1)}G = TG = G \times \G{g}^{(1)},\\
& T^{(2)}G = G \times \G{g}^{(1)} \times \G{g}^{(2)} \times \G{g}^{(21)}, \\
& T^{(3)}G = G \times \G{g}^{(1)} \times \G{g}^{(2)} \times \G{g}^{(21)} \times \G{g}^{(3)} \times \G{g}^{(31)} \times \G{g}^{(32)} \times \G{g}^{(321)}, \\
& T^{(4)}G = G \times \G{g}^{(1)} \times \G{g}^{(2)} \times \G{g}^{(21)} \times \G{g}^{(3)} \times \G{g}^{(31)} \times \G{g}^{(32)} \times \G{g}^{(321)} \times \\
& \hspace{6cm} \G{g}^{(4)} \times \G{g}^{(41)} \times \G{g}^{(42)} \times \G{g}^{(421)} \times \G{g}^{(43)} \times \G{g}^{(431)} \times \G{g}^{(432)} \times \G{g}^{(4321)}.
\end{align*}
The group operation of $T^{(n)}G$, on the other hand, is given by
\[
(x,(X_\a)_{\a \in I_n^\ast}) (y,(Y_\a)_{\a \in I_n^\ast}) = (xy,(Z_\a)_{\a \in I_n^\ast}),
\]
where $\C{P}(\a)$ being the set of all anti-lexicographically ordered partitions of $\a \in I_n^\ast$, and considering a partition $\lambda \in \C{P}(\a)$ of length $1\leq \ell \leq n$ as $\a = \lambda_\ell \cup \ldots \cup\lambda_1$, 
\[
Z_\a = Y_\a +\sum_{\lambda\in \C{P}(\a)}\,(-1)^{\ell-1}\,\ad_{Y_{\lambda_{\ell-1}}}\ldots \ad_{Y_{\lambda_1}}\Ad_{y^{-1}}X_{\lambda_\ell}.
\]
Furthermore, the inversion may be given as
\[
(x,(X_\a)_{\a \in I_n^\ast})^{-1} = (x,(W_\a)_{\a \in I_n^\ast}),
\]
where
\[
W_\a = -\sum_{\lambda\in \C{P}(\a)}\,\Ad_x\ad_{X_{\lambda_1}} \ldots \ad_{X_{\lambda_{\ell-1}}} X_{\lambda_\ell}.
\]

The following proposition sheds further light on the group structure of the iterated tangent group, which was investigated (from a similar point of view) in \cite[Prop. 3.4]{EsenKudeSutl21}. It was shown therein that $T^{(2)}G \cong G \times \G{g}^{(1)}\times \G{g}^{(2)} \times \G{g}^{(21)}$ is the double cross product of two subgroups $T^2G \cong G \times \G{g}^{(2)} \times \G{g}^{(21)}$, and $\G{g}\cong\G{g}^{(1)}$. It turns out that for $n\geq 3$ this is no longer the case; the double cross product structure evolves into a cocycle double cross product, which is the subject of the following proposition.

\begin{proposition}\label{prop-TnG-bicocycle-decomp}
The $n$th order iterated tangent group $T^{(n)}G$ has the structure of a cocycle double cross product of the $n$th order tangent group $T^nG$, by $\G{g}^{\times \,2^n-1-n}$. In short, 
\begin{equation}\label{TnG-cocycle-ext}
T^{(n)}G \cong  \G{g}^{\times \,2^n-1-n} \bowtie_\g T^nG
\end{equation}
for some $\g:\G{g}^{\times \,2^n-1-n} \times \G{g}^{\times \,2^n-1-n} \to T^nG$.
\end{proposition}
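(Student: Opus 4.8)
The plan is to deduce this directly from Proposition~\ref{prop-universal-gr}. Concretely, one must produce: (i) an embedding of $T^nG$ into $T^{(n)}G$ as a closed subgroup; (ii) an embedding of $\G{g}^{\times\,2^n-1-n}$ into $T^{(n)}G$ as a submanifold through the identity, with $0$ as distinguished element; and (iii) the verification that the multiplication of $T^{(n)}G$ restricts to a diffeomorphism $\G{g}^{\times\,2^n-1-n}\times T^nG\to T^{(n)}G$. The twisted cocycle $\g\colon\G{g}^{\times\,2^n-1-n}\times\G{g}^{\times\,2^n-1-n}\to T^nG$ of \eqref{TnG-cocycle-ext} is then read off, exactly as in Proposition~\ref{prop-universal-gr}, from $m_1m_2=(m_1\cdot m_2)\,\g(m_1,m_2)$.

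For (i) I would use the canonical (holonomic) inclusion $\iota_n\colon T^nG\hookrightarrow T^{(n)}G$ sending the $n$-jet of a curve $p$ to the corresponding element of $T(T\cdots(TG))$, i.e.\ at each stage differentiating the curve of jets. Since multiplication of jets is compatible with the tangent functor, $\iota_n$ intertwines $[p][q]=[pq]$ with the iterated group structure, so it is a Lie group homomorphism; it is a closed embedding because in the left trivializations its image is cut out by linear equations among the fibre coordinates. In the trivializations $T^nG\cong G\times\G{g}^{\times\,n}$ and $T^{(n)}G\cong G\times\G{g}^{\times\,2^n-1}$, the map $\iota_n$ carries $(x,\xi_1,\dots,\xi_n)$ to the tuple whose $\G{g}^{(\a)}$-entry is a bracket polynomial in $\xi_1,\dots,\xi_{|\a|}$ that is linear in $\xi_{|\a|}$; the assertion that $\iota_n$ intertwines \eqref{GrTnG} with the group law on $T^{(n)}G$ then amounts to the combinatorial identity that $N_{(i_1,\dots,i_\ell)}$ counts the anti-lexicographically ordered partitions of a fixed $\a\in I_n^\ast$ into blocks of cardinalities $i_1,\dots,i_\ell$ --- which is precisely how $N_{(i_1,\dots,i_\ell)}$ was defined. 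Hence $\iota_n(T^nG)$ is a subgroup of $T^{(n)}G$.

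For (ii) and (iii), take $\G{m}$ to be a linear complement, inside the fibre $\G{g}^{\times\,2^n-1}$, of the fibre of the Lie algebra of $\iota_n(T^nG)$; it has dimension $(2^n-1-n)\dim\G{g}$, so it is identified with $\G{g}^{\times\,2^n-1-n}$, and we take it as a linear submanifold of $T^{(n)}G$ through the identity. The key structural point is that the left trivialized multiplication of $T^{(n)}G$ is triangular with respect to the inclusion order on $I_n^\ast$: the $\G{g}^{(\a)}$-component of a product $(x,(X_\b))(y,(Y_\b))$ equals $Y_\a+\Ad_{y^{-1}}X_\a$ plus terms built only from $y$ and from the coordinates $X_\b,Y_\b$ with $\b\subsetneq\a$. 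Combined with the transversality of $\G{m}$ to $\iota_n(T^nG)$, this makes $\mu\circ(i\times j)\colon\G{m}\times\iota_n(T^nG)\to T^{(n)}G$ a smooth bijection whose inverse is solved level by level in increasing order of $|\a|$, hence a diffeomorphism. Proposition~\ref{prop-universal-gr} now yields exactly \eqref{TnG-cocycle-ext}.

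I expect the main obstacle to be step (i): checking, uniformly in $n$, that the holonomic locus is stable under the intricate partition sum defining the law of $T^{(n)}G$ and that there the sum collapses to the $N_{(i_1,\dots,i_\ell)}$-weighted sum of \eqref{GrTnG}; the low-order instances can be cross-checked against \eqref{GrT3G} and \eqref{GrT4G}. A secondary point, needed to justify the word \emph{cocycle} rather than plain double cross product for $n\geq3$, is to exhibit $m_1,m_2\in\G{g}^{\times\,2^n-1-n}$ with $\g(m_1,m_2)$ not the identity of $T^nG$; the first such obstruction arises at level three, produced by the nested $\ad_{\zeta_2}\ad_{\zeta_1}$-type terms already visible in \eqref{GrT4G}, whereas for $n=2$ the single extra copy $\G{g}^{(1)}$ is a subgroup and the construction degenerates to the double cross product of \cite{EsenKudeSutl21}. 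An alternative route would argue inductively via $T^{(n)}G=T(T^{(n-1)}G)$ together with the earlier proposition that the tangent group of a cocycle double cross product is again one, but that would additionally require comparing $T(T^{n-1}G)$ with $T^nG$ and checking that a tower of cocycle double cross products collapses to a single one, so the direct argument above seems preferable.
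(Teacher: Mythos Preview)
Your plan matches the paper's proof: embed $T^nG$ as a subgroup of $T^{(n)}G$, pick a complementary submanifold in the fibre, and invoke Proposition~\ref{prop-universal-gr} (the paper cites the equivalent \cite[Prop.~2.2]{EsenGuhaSutl22}). The one simplification you are missing is that in these left trivializations the holonomic embedding is simply $X_\a := \xi_{|\a|}$ --- no bracket corrections appear --- so your anticipated ``main obstacle'' in step~(i) dissolves: summing over anti-lexicographically ordered partitions of a fixed $\a$ with block cardinalities $i_1,\ldots,i_\ell$ yields exactly $N_{(i_1,\ldots,i_\ell)}$ copies of the same bracket expression, which is the definition, and the paper accordingly asserts the subgroup property without further comment. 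The paper also fixes a specific complement, namely those $(e,(X_\a))$ with $X_\a=0$ for $\a\in\{n,\,n(n{-}1),\,\ldots,\,n(n{-}1)\cdots 1\}$, rather than your generic linear complement; either works for the proposition as stated, but the explicit choice is what allows the computation of $\g$ in the $n=3$ example that follows.
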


\begin{proof}
We embed $T^nG$ into $T^{(n)}G$ via
\[
T^nG \to T^{(n)}G, \qquad (x,(\xi_k)_{1\leq k \leq n}) \mapsto (x,(X_\a)_{\a \in I_n^\ast})
\]
where 
\[
X_\a := \begin{cases}
\xi_1 & \text{ if  } |\a| = 1, \\
\xi_2 & \text{ if  } |\a| = 2, \\
\vdots & \vdots \\
\xi_n & \text{ if  } |\a| = n. 
\end{cases}
\]
It follows at once from the group structure of $T^{(n)}G$ that $T^nG \subseteq T^{(n)}G$ is a subgroup. However its complement, that is the subset of the elements of the form $(e,(X_\a)_{\a \in I_n^\ast}) \in T^{(n)}G$ where 
\[
X_\a = 0 \text{  if  } \a \in \{n, n(n-1),\ldots, n(n-1)\ldots 1\},
\]
(which is isomorphic to $\G{g}^{\times \,2^n-1-n}$) is not a subgroup; it is not closed under the multiplication on $T^{(n)}G$. Furthermore, it follows from a straightforward computation that, along with the above presentations of the elements, the multiplication on $T^{(n)}G$ yields an isomorphism 
\[
\G{g}^{\times \,2^n-1-n} \times T^nG \to T^{(n)}G.
\] 
Accordingly, the claim becomes a direct consequence of \cite[Prop. 2.2]{EsenGuhaSutl22}.
\end{proof}

For the sake of the convenience, let us record below the further details of $T^{(3)}G \cong \G{g}^{\times 4} \bowtie_\gamma T^3G$.

\begin{example}
Given $(x,X_1,X_2,X_{21},X_3,X_{31},X_{32},X_{321}),(y,Y_1,Y_2,Y_{21},Y_3,Y_{31},Y_{32},Y_{321}) \in T^{(3)}G$, we recall from \cite{Vizm13} that the multiplication on $T^{(3)}G$ may be given by
\begin{align*}
& (x,X_1,X_2,X_{21},X_3,X_{31},X_{32},X_{321}) (y,Y_1,Y_2,Y_{21},Y_3,Y_{31},Y_{32},Y_{321}) = \\
& (xy, Y_1 + \Ad_{y^{-1}}(X_1), Y_2 + \Ad_{y^{-1}}(X_2), Y_{21} + \Ad_{y^{-1}}(X_{21}) - \ad_{Y_1}\Ad_{y^{-1}}(X_2), Y_3 + \Ad_{y^{-1}}(X_3), \\
& \hspace{2cm} Y_{31} + \Ad_{y^{-1}}(X_{31}) - \ad_{Y_1}\Ad_{y^{-1}}(X_3), Y_{32} + \Ad_{y^{-1}}(X_{32}) - \ad_{Y_2}\Ad_{y^{-1}}(X_3), \\
& Y_{321} + \Ad_{y^{-1}}(X_{321}) - \ad_{Y_1}\Ad_{y^{-1}}(X_{32}) - \ad_{Y_2}\Ad_{y^{-1}}(X_{31}) + \ad_{Y_2}\ad_{Y_1}\Ad_{y^{-1}}(X_3)).
\end{align*}
On the other hand, the embeddings $\G{g}^{\times 4} \hookrightarrow T^{(3)}G \hookleftarrow T^3G$ of Proposition \ref{prop-TnG-bicocycle-decomp} are given by 
\[
T^3G\ni (x,\xi_1,\xi_2,\xi_3) \mapsto (x,\xi_1,\xi_1,\xi_2,\xi_1,\xi_2,\xi_2,\xi_3) \in T^{(3)}G,
\]
and
\[
\G{g}^{\times 4}\ni (X_1,X_2,X_{21},X_{31}) \mapsto (X_1,X_2,X_{21},0,X_{31},0,0) \in T^{(3)}G.
\]
Accordingly, the twisted cocycle map and the induced multiplication turn out to be
\begin{align*}
&\g: \G{g}^{\times 4} \times \G{g}^{\times 4} \to T^3G, \\
& \Big((e,X_2,X_3,X_{31},X_{32}),(e,Y_2,Y_3,Y_{31},Y_{32})\Big) \mapsto (e,0,0,-\ad_{Y_2}X_{31}), \\
&\Phi: \G{m} \times \G{m} \to \G{m} , \\
& (e,X_1,X_2,X_{21},X_{31})\cdot (e,Y_1,Y_2,Y_{21},Y_{31}) = (e,Y_1+X_1,Y_2+X_2,Y_{21}+X_{21}-\ad_{Y_1}X_2,Y_{31}+X_{31}).
\end{align*}

In order to obtain $\s: T^3G \times \G{g}^{\times 4} \to T^3G$ and $\rt: T^3G \times \G{g}^{\times 4} \to \G{g}^{\times 4}$ we compute, in $T^{(3)}G$,
\begin{align*}
& (x,\xi_1,\xi_1,\xi_2,\xi_1,\xi_2,\xi_2,\xi_3) (e,X_1,X_2,X_{21},0,X_{31},0,0)= \\
& (x,X_1+\xi_1,X_2+\xi_1, X_{21}+\xi_2 - \ad_{X_1}\xi_1,\xi_2, \\
& \hspace{2cm} X_{31}+\xi_2-\ad_{X_1}\xi_1,\xi_2-\ad_{X_2}\xi_1, \xi_3-\ad_{X_1}\xi_2-\ad_{X_2}\xi_2 +\ad_{X_2}\ad_{X_1}\xi_1),
\end{align*}
and compare it with
\begin{align*}
& (e,Y_1,Y_2,Y_{21},0,Y_{31},0,0)(y,\eta_1,\eta_1,\eta_2,\eta_2,\eta_1,\eta_2,\eta_2,\eta_3) = \\
& (y, \eta_1+\Ad_{y^{-1}}Y_1,\eta_1+\Ad_{y^{-1}}Y_2,\eta_2 +\Ad_{y^{-1}}Y_{21}-\ad_{\eta_1}\Ad_{y^{-1}}Y_2, \\
& \hspace{2cm}\eta_1, \eta_2+\Ad_{y^{-1}}Y_{31}, \eta_2, \eta_3 - \ad_{\eta_1}\Ad_{y^{-1}}Y_{31}).
\end{align*}
We thus find
\begin{align*}
& (x,\xi_1,\xi_1,\xi_2,\xi_1,\xi_2,\xi_2,\xi_3) (e,X_1,X_2,X_{21},0,X_{31},0,0)= \\
& (e, \Ad_x(X_1+\xi_1-\xi_2),\Ad_x (X_2+\xi_1-\xi_2) , \\
& \hspace{2cm} \Ad_x (X_{21}+\ad_{X_2-X_1}\xi_1-\ad_{\xi_2}(X_2+\xi_1)), 0, \Ad_x (X_{31}+\ad_{X_2-X_1}\xi_1) ,0,0) \times \\
& (x,\xi_2,\xi_2,\xi_2-\ad_{X_2}\xi_1,\xi_2,\xi_2-\ad_{X_2}\xi_1, \\
& \hspace{2cm}\xi_2-\ad_{X_2}\xi_1, \xi_3-\ad_{X_2+X_1}\xi_2+\ad_{\xi_2}(X_{31}+\ad_{X_2-X_1}\xi_1)+\ad_{X_2}\ad_{X_1}\xi_1),
\end{align*}
that is,
\begin{align*}
&\s: T^3G \times \G{g}^{\times 4} \to T^3G , \\
& \Big((x,\xi_1,\xi_2,\xi_3), (e,X_1,X_2,X_{21},X_{31})\Big) \mapsto  (x,\xi_2,\xi_2,\xi_2-\ad_{X_2}\xi_1,\xi_2,\xi_2-\ad_{X_2}\xi_1, \\
& \hspace{4cm}\xi_2-\ad_{X_2}\xi_1, \xi_3-\ad_{X_2+X_1}\xi_2+\ad_{\xi_2}(X_{31}+\ad_{X_2-X_1}\xi_1)+\ad_{X_2}\ad_{X_1}\xi_1), \\
&\rt: T^3G \times \G{g}^{\times 4} \to \G{g}^{\times 4} , \\
& (x,\xi_1,\xi_2,\xi_3)\rt (e,X_1,X_2,X_{21},X_{31})=  (e, \Ad_x(X_1+\xi_1-\xi_2),\Ad_x (X_2+\xi_1-\xi_2) , \\
& \hspace{4cm} \Ad_x (X_{21}+\ad_{X_2-X_1}\xi_1-\ad_{\xi_2}(X_2+\xi_1)), 0, \Ad_x (X_{31}+\ad_{X_2-X_1}\xi_1) ,0,0).
\end{align*}
\end{example}

\begin{remark}
As a result of Proposition \ref{prop-TnG-bicocycle-decomp}, $\G{g}^{\times \,2^n-1-n} \subseteq T^{(n)}G$ is not a subgroup for $n\geq 3$. Therefore, the dynamics on $T^nG \subseteq T^{(n)}G$ cannot be obtained from that on $T^{(n)}G$ by a $\G{g}^{\times \,2^n-1-n}$-reduction, as was done in \cite{EsenKudeSutl21} for $n=2$. However, the immersion $T^nG \to T(T^{n-1}G)$ of \cite[Subsect. 2.3]{colombo2011geometry} offers an alternate route to walk around this problem.
\end{remark}

\begin{proposition}
The $n$th order tangent group $T^nG$ may be seen as a subgroup of the tangent group $T(T^{n-1}G)$ via
\begin{equation}\label{TnG-into-T(Tn-1G)}
T^nG \hookrightarrow T(T^{n-1}G), \qquad (x,\xi_1,\ldots,\xi_n) \mapsto (x,\xi_1,\ldots,\xi_{n-1}; \xi_1,\ldots,\xi_n).
\end{equation}
\end{proposition}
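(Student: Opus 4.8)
The map \eqref{TnG-into-T(Tn-1G)} is, up to the left trivializations, nothing but the canonical immersion $\iota\colon T^nQ\hookrightarrow T(T^{n-1}Q)$ available for every smooth manifold $Q$, see \cite[Subsect.~2.3]{colombo2011geometry}. Accordingly the plan is: (i) recall $\iota$ and note it is an injective immersion; (ii) show that for $Q=G$ a Lie group $\iota$ is a group homomorphism; (iii) unwind $\iota$ in the left trivializations and read off the stated formula. For (i), a class $[p]\in T^nG$, represented by a curve $p$ in $G$, determines on the one hand the truncated jet $[p]_{n-1}\in T^{n-1}G$ and, on the other hand, the curve $s\mapsto[p(s+\,\cdot\,)]_{n-1}$ in $T^{n-1}G$ through $[p]_{n-1}$ at $s=0$; one sets
\[
\iota([p]):=\ds\,[p(s+\,\cdot\,)]_{n-1}\ \in\ T_{[p]_{n-1}}\big(T^{n-1}G\big).
\]
This depends only on the $n$-jet of $p$, and it is an injective immersion: the base point $[p]_{n-1}$ recovers $x$ together with $\xi_1,\dots,\xi_{n-1}$, while the velocity recovers the remaining datum $\xi_n$. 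In the coordinates below the image is even a closed submanifold, so $\iota$ is a closed embedding.

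For (ii), the homomorphism property follows from the naturality of $\iota$ in $Q$. Writing $\mu\colon G\times G\to G$ for the multiplication, the group law of $T^nG$ is $T^n\mu$ under $T^n(G\times G)\cong T^nG\times T^nG$, and likewise that of $T(T^{n-1}G)$ is $T(T^{n-1}\mu)$; since $\iota$ is natural and compatible with products, applying naturality to $\mu$ gives $\iota([p][q])=\iota([p])\,\iota([q])$. Concretely this is just the observation that $(pq)(s+\,\cdot\,)=p(s+\,\cdot\,)\,q(s+\,\cdot\,)$ as curves in $G$, whence $[(pq)(s+\,\cdot\,)]_{n-1}=[p(s+\,\cdot\,)]_{n-1}\cdot[q(s+\,\cdot\,)]_{n-1}$ in $T^{n-1}G$, and differentiating at $s=0$ — using that $T$ is a functor — yields the same identity. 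Hence $\iota$ is an immersive group homomorphism, so $T^nG$ is realized as a Lie subgroup of $T(T^{n-1}G)$.

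For (iii), under $tr^L_{T^{n-1}G}$ the curve $s\mapsto[p(s+\,\cdot\,)]_{n-1}$ reads $s\mapsto\big(p(s),(\d^\ell p)(s),\dots,(\d^\ell p)^{(n-2)}(s)\big)$; its value at $s=0$ is $(x,\xi_1,\dots,\xi_{n-1})$, and its velocity at $s=0$, after left-trivializing the $G$-component, equals $\big((\d^\ell p)(0),(\d^\ell p)'(0),\dots,(\d^\ell p)^{(n-1)}(0)\big)=(\xi_1,\xi_2,\dots,\xi_n)$. Assembling base and fiber reproduces the asserted formula $(x,\xi_1,\dots,\xi_n)\mapsto(x,\xi_1,\dots,\xi_{n-1};\xi_1,\dots,\xi_n)$. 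The only step here that is not purely formal is this last coordinate identification, i.e. matching the left-trivialized velocity of the shifted family $[p(s+\,\cdot\,)]_{n-1}$ against the logarithmic-derivative coordinates built into \eqref{GrTnG}; everything else is functoriality of the higher-order tangent functors.
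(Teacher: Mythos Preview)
Your proof is correct and takes a genuinely different route from the paper's. The paper proceeds by induction on $n$: it uses that $T^{k+1}G$ is an abelian extension of $T^kG$ by $\G{g}$ (via a group 2-cocycle $\g$) and verifies directly, in those coordinates, that the proposed map intertwines the multiplications, the key identity being $\Ad_{(y,\tilde{\eta})^{-1}}(\tilde{\xi},0)=(\tilde{\zeta},\g((x,\tilde{\xi}),(y,\tilde{\eta})))$, which follows from the induction hypothesis together with Proposition~\ref{prop-Ad-cocycle-cross}. Your argument instead exploits the naturality of the canonical immersion $T^n\Rightarrow T\circ T^{n-1}$ and the functoriality of $T$ to get the homomorphism property for free, reducing everything to a single coordinate computation. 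This is cleaner and more conceptual; the paper's route, by contrast, makes explicit contact with the abelian-extension structure that is used elsewhere in the section.

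One point in your step (iii) deserves more care. The paper's trivialization of $T(T^{n-1}G)$ is the full left trivialization $T^{n-1}G\ltimes\text{Lie}(T^{n-1}G)$, as is clear from the multiplication rule used in its proof. Thus the fibre coordinate of $\iota([p])$ is the velocity of $s\mapsto[p(s+\,\cdot\,)]_{n-1}$ left-translated by the inverse in the group $T^{n-1}G$, not merely in the product manifold $G\times\G{g}^{n-1}$. What you compute --- differentiating $(p(s),(\d^\ell p)(s),\ldots,(\d^\ell p)^{(n-2)}(s))$ componentwise and then left-trivializing only the $G$-slot --- is a priori a different operation. The two do coincide, but this requires a short verification: writing $r_s(t)=p(t)^{-1}p(s+t)$ one has $(\d^\ell r_s)(t)=(\d^\ell p)(s+t)-\Ad_{r_s(t)^{-1}}(\d^\ell p)(t)$, and since $\partial_s|_{s=0}\Ad_{r_s(t)^{-1}}(\d^\ell p)(t)=-\ad_{(\d^\ell p)(t)}(\d^\ell p)(t)=0$, the $\Ad$-correction contributes only at order $s^2$, so $\partial_s|_{s=0}(\d^\ell r_s)^{(k)}(0)=(\d^\ell p)^{(k+1)}(0)=\xi_{k+2}$ for $0\le k\le n-2$. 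You rightly flag this as the only non-formal step; adding this line would close the gap.
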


\begin{proof}
It suffices to show that \eqref{TnG-into-T(Tn-1G)} respects the group multiplications. To this end, we shall use induction over $k\geq 0$. The case $k=1$ has already been treated in \cite{EsenKudeSutl21}. Let us consider $((x,\tilde{\xi}),\xi_{k+1}):=(x,\xi_1,\ldots,\xi_{k+1})$ and $((y,\tilde{\eta}),\eta_{k+1}):=(y,\eta_1,\ldots,\eta_{k+1})$ in $T^{k+1}G$. Then, \eqref{TnG-into-T(Tn-1G)} operates as
\[
T^{k+1}G \ni ((x,\tilde{\xi}),\xi_{k+1}) \mapsto (x,\tilde{\xi}; \tilde{\xi},\xi_{k+1}) \in T(T^kG)
\]
and
\[
T^{k+1}G \ni ((y,\tilde{\eta}),\eta_{k+1}) \mapsto (y,\tilde{\eta}; \tilde{\eta},\eta_{k+1}) \in T(T^kG).
\]
Since $T^{k+1}G$ is an abelian extension of $T^kG$, see for instance \cite[Prop. 4.1]{Vizm13},
\begin{align*}
& (x,\xi_1,\ldots,\xi_{k+1})(y,\eta_1,\ldots,\eta_{k+1}) = ((x,\tilde{\xi}),\xi_{k+1})((y,\tilde{\eta}),\eta_{k+1}) = \\
&  ((x,\tilde{\xi})(y,\tilde{\eta}), \eta_{k+1} + \Ad_{y^{-1}}\xi_{k+1} +\g((x,\tilde{\xi}),(y,\tilde{\eta})))
\end{align*}
where $\g:T^kG\times T^kG \to \G{g}$ is the group 2-cocycle characterizing the abelian extension (the explicit expression of which may be found in \cite[Sect. 4]{Vizm13}).
On the other hand,
\begin{align*}
& (x,\tilde{\xi}; \tilde{\xi},\xi_{k+1})(y,\tilde{\eta}; \tilde{\eta},\eta_{k+1}) = ((x,\tilde{\xi})(y,\tilde{\eta});\Ad_{(y,\tilde{\eta})^{-1}}(\tilde{\xi},\xi_{k+1}) + (\tilde{\eta},\eta_{k+1})) = \\
& ((x,\tilde{\xi})(y,\tilde{\eta});\Ad_{(y,\tilde{\eta})^{-1}}(\tilde{\xi},0) + \Ad_{(y,\tilde{\eta})^{-1}}(\t,\xi_{k+1}) + (\tilde{\eta},\eta_{k+1})), 
\end{align*}
where it follows at once from Proposition \ref{prop-Ad-cocycle-cross} above\footnote[1]{To be more precise, it follows from the analogue result for $H {}_{\g\hspace{-.1cm}}\bowtie M$.} that 
\[
\Ad_{(y,\tilde{\eta})^{-1}}(\t,\xi_{k+1}) = (\t,\Ad_{y^{-1}}\xi_{k+1}).
\]
Let, next, $(xy, \tilde{\eta} + \tilde{\zeta} ):=(x,\tilde{\xi})(y,\tilde{\eta}) \in T^kG$. Then, $T^{k+1}G$ being an abelian extension of $T^kG$, it follows from the induction step on $T^kG$ that
\[
\Ad_{(y,\tilde{\eta})^{-1}}(\tilde{\xi},0) = (\tilde{\zeta}, \g((x,\tilde{\xi}),(y,\tilde{\eta}))).
\]
We remark that the cocycle term on the latter component does not depent on $x\in G$, see \cite[Prop. 4.1]{Vizm13}, as such the equality makes sense. The result then follows.
\end{proof}

The following result is straightforward, hence the proof is omitted.

\begin{proposition}
The subgroup $\G{g}^{\times \,n-1} \cong T^{n-1}_eG := \{(e,\xi_1,\ldots,\xi_{n-1}) \in T^{n-1}G \mid \xi_1,\ldots,\xi_{n-1}\in \G{g}\}$ of $T^{n-1}G$, may be seen as a subgroup of the tangent group $T(T^{n-1}G)$ via
\begin{equation}\label{gn-1-into-T(Tn-1G)}
\G{g}^{\times \,n-1} \hookrightarrow T(T^{n-1}G), \qquad (X_1,\ldots,X_{n-1}) \mapsto (e,X_1,\ldots, X_{n-1}; 0,\ldots,0).
\end{equation}
\end{proposition}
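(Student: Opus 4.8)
## Proof Proposal

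The plan is to verify directly that the map \eqref{gn-1-into-T(Tn-1G)} is a group homomorphism, since this is all that needs to be checked: the image is visibly a submanifold, and if the map respects multiplication then it identifies $\G{g}^{\times\,n-1}$ with a subgroup of $T(T^{n-1}G)$. First I would fix notation, writing a generic element of $\G{g}^{\times\,n-1}$ as $\tilde{X} = (X_1,\ldots,X_{n-1})$, so that its image is the pair $(e,\tilde{X};0,\ldots,0) \in T(T^{n-1}G)$ with base point $(e,\tilde{X}) \in T^{n-1}_eG \subseteq T^{n-1}G$ and fibre component the zero vector in $\G{g}^{\times\,n-1} \cong \G{t}^{n-1}\G{g}$, the Lie algebra of $T^{n-1}G$.

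The key computation uses the trivialized multiplication on a tangent group $T(T^{n-1}G) \cong T^{n-1}G \ltimes \G{t}^{n-1}\G{g}$, namely
\[
((a,\mathfrak{u})\,;(b,\mathfrak{v})) = (ab\,; \Ad_{b^{-1}}\mathfrak{u} + \mathfrak{v}),
\]
where here $a,b \in T^{n-1}G$ and $\mathfrak{u},\mathfrak{v}$ lie in the Lie algebra of $T^{n-1}G$. Applying this with $a = (e,\tilde{X})$, $b = (e,\tilde{Y})$, and $\mathfrak{u} = \mathfrak{v} = 0$ gives the product of the two images as
\[
\big((e,\tilde{X})(e,\tilde{Y})\,;\,\Ad_{(e,\tilde{Y})^{-1}}(0) + 0\big) = \big((e,\tilde{X})(e,\tilde{Y})\,;\,0\big).
\]
Thus the fibre component stays zero, and it remains only to identify $(e,\tilde{X})(e,\tilde{Y})$ with the image of the product $\tilde{X}\cdot\tilde{Y}$ computed inside $\G{g}^{\times\,n-1} \cong T^{n-1}_eG$. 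But $T^{n-1}_eG$ is by definition the subgroup of $T^{n-1}G$ consisting of elements with base point $e$, so the product $(e,\tilde{X})(e,\tilde{Y})$ computed in $T^{n-1}G$ is precisely the product computed in $T^{n-1}_eG$. Hence \eqref{gn-1-into-T(Tn-1G)} intertwines the two multiplications, and the unit $(0,\ldots,0)\in\G{g}^{\times\,n-1}$ maps to the unit $(e,0,\ldots,0;0,\ldots,0)$ of $T(T^{n-1}G)$, completing the argument.

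There is essentially no obstacle here — the only point that requires a moment's care is recognizing that the relevant $\Ad$-term in the tangent-group multiplication acts on the zero vector and hence drops out, so that no knowledge of the (generally complicated) adjoint action of $T^{n-1}G$ on its Lie algebra is needed. This is why the paper records the result as straightforward and omits the proof; I would likewise only sketch it, or omit it entirely as the authors do.
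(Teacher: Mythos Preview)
Your argument is correct: the verification that \eqref{gn-1-into-T(Tn-1G)} is a group homomorphism reduces immediately to the observation that the fibre component vanishes (since $\Ad$ is applied to zero) and that the base-point product is just the product in the subgroup $T^{n-1}_eG$. The paper itself omits the proof entirely as straightforward, so your sketch is already more than what the authors provide, and your closing remark anticipates this.
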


\begin{proposition}\label{prop-higher-order-TG-factors}
The tangent group $T(T^{n-1}G)$ has the structure of a double cross product of the $n$th order tangent group $T^nG$, by $\G{g}^{\times \,n-1}$. In short, 
\begin{equation}\label{T(TnG)-double-cross}
T(T^{n-1}G) \cong  \G{g}^{\times \,n-1} \bowtie T^nG.
\end{equation}
\end{proposition}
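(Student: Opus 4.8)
The plan is to apply the universal property of cocycle double cross products, Proposition~\ref{prop-universal-gr}, to the pair $\big(\G{g}^{\times\,n-1},\,T^nG\big)$ sitting inside $T(T^{n-1}G)$ through the embeddings \eqref{gn-1-into-T(Tn-1G)} and \eqref{TnG-into-T(Tn-1G)}, where $\G{g}^{\times\,n-1}$ is understood with the group structure of $T^{n-1}_eG\subseteq T^{n-1}G$. Both embeddings are group homomorphisms onto subgroups by the two preceding propositions, so the single point that remains is to check that the multiplication map
\[
\mu\circ(i\times j):\G{g}^{\times\,n-1}\times T^nG \longrightarrow T(T^{n-1}G),\qquad \big(X,(x,\xi_1,\ldots,\xi_n)\big)\mapsto i(X)\cdot j(x,\xi_1,\ldots,\xi_n),
\]
is a diffeomorphism. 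Granting this, Proposition~\ref{prop-universal-gr} yields a cocycle double cross product structure $T(T^{n-1}G)\cong\G{g}^{\times\,n-1}\bowtie_\g T^nG$; and since $\G{g}^{\times\,n-1}$ is not merely a submanifold but a genuine subgroup, the product $m_1m_2$ of two of its elements again lies in $\G{g}^{\times\,n-1}$, so the uniqueness of the $\G{g}^{\times\,n-1}\times T^nG$ factorization forces $\g(m_1,m_2)=1$ in $m_1m_2=(m_1\cdot m_2)\,\g(m_1,m_2)$. Hence $\g$ is trivial, and by the Remark following Proposition~\ref{prop-universal-gr} the decomposition collapses to the genuine double cross product \eqref{T(TnG)-double-cross}.

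To verify that $\mu\circ(i\times j)$ is a diffeomorphism I would compute in the left trivialization $T(T^{n-1}G)\cong T^{n-1}G\ltimes\G{g}^{\times\,n}$, where $\G{g}^{\times\,n}$ is the Lie algebra of $T^{n-1}G$ and an element is written $\big((x,\xi_1,\ldots,\xi_{n-1});(\xi_1',\ldots,\xi_n')\big)$ in accordance with \eqref{TnG-into-T(Tn-1G)}. Since $i(X_1,\ldots,X_{n-1})=\big((e,X_1,\ldots,X_{n-1});0\big)$ carries a trivial fibre vector, the adjoint terms in the semidirect product vanish and
\[
i(X_1,\ldots,X_{n-1})\,j(x,\xi_1,\ldots,\xi_n)=\big((e,X_1,\ldots,X_{n-1})\,(x,\xi_1,\ldots,\xi_{n-1});\,(\xi_1,\ldots,\xi_n)\big),
\]
the first factor of the base computed in $T^{n-1}G$ via \eqref{GrTnG}. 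Thus for a prescribed target $\big((x,z_1,\ldots,z_{n-1});(\eta_1,\ldots,\eta_n)\big)$ the fibre component forces $\xi_k=\eta_k$ for $1\leq k\leq n$, and then the base component forces $(e,X_1,\ldots,X_{n-1})=(x,z_1,\ldots,z_{n-1})\,(x,\eta_1,\ldots,\eta_{n-1})^{-1}$, whose $G$-entry is automatically the identity so that $(X_1,\ldots,X_{n-1})$ is read off uniquely. This gives a smooth two-sided inverse.

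The only substantive ingredient is the structural fact behind the last step: the fibre subgroup $T^{n-1}_eG=\{(e,\ast,\ldots,\ast)\}\subseteq T^{n-1}G$, being the kernel of the bundle projection $T^{n-1}G\to G$, acts freely and transitively by left translation on each fibre of that projection, which is exactly what makes the base-component equation uniquely solvable for $x$ and $(X_1,\ldots,X_{n-1})$ once $(\xi_1,\ldots,\xi_{n-1})$ has been pinned down by the fibre-component equation. I do not anticipate a genuine obstacle here: the $\G{g}^{\times\,n-1}$-factor enters the product through a trivial fibre vector, so the $\ad$/$\Ad$ corrections of \eqref{GrTnG} disappear; the dimension count $(n-1)\dim\G{g}+\big(\dim G+n\dim\G{g}\big)=2\dim T^{n-1}G=\dim T(T^{n-1}G)$ is consistent with a bijection; and every map involved is polynomial in the structure data, hence smooth.
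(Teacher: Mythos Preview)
Your proof is correct and follows essentially the same approach as the paper: both compute the product $i(X)\,j(x,\xi_1,\ldots,\xi_n)=\big((e,X)(x,\xi_1,\ldots,\xi_{n-1});\xi_1,\ldots,\xi_n\big)$ in the left trivialization and then invert it by first reading off the fibre component and then solving in the base by right-translation in $T^{n-1}G$. Your version is in fact slightly more complete, since you explicitly argue that the twisted cocycle $\gamma$ is trivial (using that $\G{g}^{\times\,n-1}$ is a genuine subgroup together with uniqueness of the factorization), a point the paper leaves implicit.
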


\begin{proof}
In view of the embeddings \eqref{TnG-into-T(Tn-1G)} and \eqref{gn-1-into-T(Tn-1G)}, we only need to observe that the group multiplication on $T(T^{n-1}G)$ yields an isomorphism
\[
\G{g}^{\times \,n-1}\times T^nG \to T(T^{n-1}G).
\]
Indeed, we have
\begin{align*}
& (e,X_1,\ldots, X_{n-1}; 0,\ldots,0)(x,\xi_1,\ldots,\xi_{n-1}; \xi_1,\ldots,\xi_n)= \\
& ((e,X_1,\ldots, X_{n-1})(x,\xi_1,\ldots,\xi_{n-1}); \xi_1,\ldots,\xi_n),
\end{align*}
where the multiplication on the former components takes place in $T^{n-1}G$, and given any $(y,\eta_1,\ldots,\eta_{n-1}) \in T^{n-1}G$, the indeterminants $x, X_1,\ldots, X_{n-1}$ may be arranged in such a way that 
\[
(e,X_1,\ldots, X_{n-1})(x,\xi_1,\ldots,\xi_{n-1}) = (y,\eta_1,\ldots,\eta_{n-1}).
\]
Indeed, one would set $x=y$, and 
\[
(e,X_1,\ldots, X_{n-1}) = (y,\eta_1,\ldots,\eta_{n-1})(y,\xi_1,\ldots,\xi_{n-1})^{-1} = (y,\eta_1,\ldots,\eta_{n-1})(y^{-1},\zeta_1,\ldots,\zeta_{n-1})
\]
for some $\zeta_1,\ldots,\zeta_{n-1} \in \G{g}$. The result thus follows.
\end{proof}

Let us illustrate the mutual actions in $T(T^2G) \cong  \G{g}^{\times \,2} \bowtie T^3G$ below.

\begin{example}
In view of the inclusions
\begin{align}
&T^3G \to T(T^2G),\quad (x,\xi_1,\xi_2,\xi_3)\to (X,\xi_1,\xi_2, \xi_1, \xi_2, \xi_3),\\
&\G{g}^{\times \,2} \to T(T^2G),\quad (X_1,X_2)\to (e,X_1,X_2,0,0,0),
\end{align}
the multiplication 
\begin{align*}
 \G{g}^{\times \,2} \times T^3 G&\to T(T^2G)\\
\left(({X}_1,{X}_2);(X,\xi_1,\xi_2,\xi_3)\right)&\mapsto(e,X_1,X_{2},0,0,0)(X,\xi_1,\xi_2,\xi_1,\xi_2,\xi_3
)=\\
&((e,X_1,X_{2})(X,\xi_1,\xi_2),(\xi_1,\xi_2,\xi_3))=\\
&(X,\xi_1+\Ad_{X^{-1}}X_1,\xi_2+\Ad_{X^{-1}}X_2-\ad_{\xi_1}\Ad_{X^{-1}}X_1,\xi_1,\xi_2,\xi_3)
\end{align*}  
happens, clearly, to be an isomorphism. 
 
On the other hand, in order to compute the mutual actions of $\G{g}^{\times \,2}$ and $T^3G$ onto each other we compare
\begin{align*}
&(X,\xi_1,\xi_2,\xi_1,\xi_2,\xi_3)(e,X_1,X_{2},0,0,0)=((X,\xi_1,\xi_2)(e,X_1,X_{2}),\Ad_{(e,X_1,X_{2})^{-1}}(X,\xi_1,\xi_2))\\
&=(X, X_1+\xi_1,X_2+\xi_2-\ad_{X_1}\xi_1,\xi_1,\xi_2+\ad_{\xi_1}(X_1),\xi_3+\ad_{\xi_1}(X_2)+2\ad_{\xi_2}X_1-\ad_{X_1}\ad_{\xi_1}X_1),
\end{align*}  
where the adjoint action is
\begin{align*}
&\Ad_{(e,-X_1,-X_2)}(\xi_1,\xi_2,\xi_3)=\left.\frac{d}{dt}\right|_{t=0}(e,-X_1,-X_2)(X_t,\xi_2t,\xi_3t)(e,X_1,X_2) =\\
&\left.\frac{d}{dt}\right|_{t=0}(X_t,\xi_2t+\Ad_{X_t^{-1}}(-X_1),\xi_3t+\Ad_{X_t^{-1}}(-X_2)-\ad_{\xi_2 t}\Ad_{X_t^{-1}}(-X_1))(e,X_1,X_2) =\\
&\left.\frac{d}{dt}\right|_{t=0}(X_t,X_1+\xi_2t+\Ad_{X_t^{-1}}(-X_1),X_2+\xi_3t+\\
&\hspace{4cm}\Ad_{X_t^{-1}}(-X_2)-\ad_{\xi_2 t}\Ad_{X_t^{-1}}(-X_1)-\ad_{X_1}(\xi_2t+\Ad_{X_t^{-1}}(-X_1))) =\\
&(\xi_1,\xi_2-\ad_{\xi_1}(-X_1),\xi_3-\ad_{\xi_1}(-X_2)-\ad_{\xi_2}(-X_1)-\ad_{X_1}(\xi_2-X_1)-\ad_{X_1}(-\ad_{\xi_1}(-X_1)),
\end{align*} 
with
\begin{align*}
&(e,Y_1,Y_{2},0,0,0)(Y,\eta_1,\eta_2,\eta_1,\eta_2,\eta_3)=((e,Y_1,Y_{2})(Y,\eta_1,\eta_2),\Ad_{(Y,\eta_1,\eta_2)^{-1}}(e,Y_1,Y_{2}))\\
&(Y,\eta_1+\Ad_{Y^{-1}}Y_1,\eta_2+\Ad_{Y^{-1}}Y_2-\ad_{\eta_1}\Ad_{Y^{-1}}Y_1,\eta_1,\eta_2,\eta_3).
\end{align*}
Hence, setting
\begin{align*}
&Y=X, \quad Y_1=\Ad_Y X_1,\quad Y_2=\Ad_X(X_2+\ad_{\eta_1}\Ad_{X^{-1}}Y_1)\\
&\eta_1=\xi_1,\quad  \eta_2=\xi_2+\ad_{\xi_1}X_1,\quad
\eta_3=\xi_3+\ad_{\xi_1}(X_2)+2\ad_{\xi_2}X_1-\ad_{X_1}\ad_{\xi_1}X_1,
\end{align*}
the mutual actions appear as
\begin{align*}
& \rt:T^3G\times \G{g}^{\times \,2}\to \G{g}^{\times \,2},\\
& (X,\xi_1,\xi_2, \xi_3)\rt(X_1,X_2)=(\Ad_X X_1,\Ad_X(X_2+\ad_{\xi_1} \Ad_{X^{-1}}\Ad_yX_1)), 
\end{align*}
and
\begin{align*}
&\lt:T^3G\times \G{g}^{\times \,2}\to T^3G, \\
& (X,\xi_1,\xi_2, \xi_3)\lt(X_1,X_2)=(X,\xi_1,\xi_2+\ad_{\xi_1} X_1,\xi_3+\ad_{\xi_1}X_2+2\ad_{\xi_2}X_1-\ad_{X_1}\ad_{\xi_1}X_1). 
 \end{align*}
\end{example}  

We shall next obtain the Euler-Lagrange equations over $T(T^2G)$. Accordingly, in view of Proposition \ref{prop-higher-order-TG-factors}, the Euler-Lagrange equations over $T^3G$ will follow from the $\G{g}^{\times \, 2}$-reduction.

On the other hand, let us recall from \cite[Prop. 3.1]{EsenKudeSutl21} that the second order tangent group $T^2G$ is a 2-cocycle extension of $TG$ by $\G{g}$, that is,
\[
T^2G \cong TG \ltimes_\vp \G{g},
\]
where
\[
\vp: TG \times TG \to \G{g}, \qquad \vp((g_1,\xi_1),(g_2,\xi_2)) := -\ad_{\xi_2}\Ad_{g_2^{-1}}\xi_1.
\]
Hence, in view of \eqref{derv-of-gamma}, the Lie algebra of $T^2G \cong TG \ltimes_\vp \G{g}$ is of the form of a 2-cocycle extension $(\G{g}\ltimes \G{g}) \ltimes_\t \G{g}$, where
\[
\t: (\G{g}\ltimes \G{g}) \ot (\G{g}\ltimes \G{g}) \to (\G{g}\ltimes \G{g}), \qquad \t((\xi_1,\xi_2),(\xi_3,\xi_4)) := -2\ad_{\xi_4}\xi_2.
\]

\begin{proposition}
The Euler-Lagrange equations of a  Lagrangian $\C{L}=\C{L}(g,\xi_1,\xi_2,\eta_0,\eta_1,\eta_2)$
defined on $T(T^2G) \cong(TG\ltimes_{\varphi} \G{g}) \ltimes((\G{g} \ltimes\G{g})\ltimes_\t \G{g})$ are given by
\begin{align}
&\frac{d}{dt}\frac{\delta\C{L}}{\delta \eta_0}=T^\ast L_{g}(\frac{\delta\C{L}}{\delta g})-\ad^{\ast}_{\xi_1}(\frac{\delta\C{L}}{\delta \xi_1})-\ad^{\ast}_{\xi_2}(\frac{\delta\C{L}}{\delta \xi_2})-\ad^{\ast}_{\eta_0}(\frac{\delta\C{L}}{\delta \eta_0})-\ad^{\ast}_{\eta_1}(\frac{\delta\C{L}}{\delta \eta_1})-\ad^{\ast}_{\eta_2}(\frac{\delta\C{L}}{\delta \eta_2})\label{ELET3-1}\\
&\frac{d}{dt}\frac{\delta\C{L}}{\delta \eta_1}=\frac{\delta\C{L}}{\delta \xi_1}-\ad^{\ast}_{\xi_1}(\frac{\delta\C{L}}{\delta \xi_2})-\ad^{\ast}_{\eta_0}(\frac{\delta\C{L}}{\delta \eta_1})-2\ad^{\ast}_{\eta_1}(\frac{\delta\C{L}}{\delta \eta_2})\label{ELET3-2}\\
&\frac{d}{dt}\frac{\delta\C{L}}{\delta \eta_2}=T^{\ast}L_{\xi_2}\left(\frac{\delta\C{L}}{\delta \xi_2}\right)-\ad^{\ast}_{\eta_0}(\frac{\delta\C{L}}{\delta\eta_2}).\label{ELET3-3}
\end{align} 
\end{proposition}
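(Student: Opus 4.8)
The plan is to deduce these equations directly from the general trivialized Euler--Lagrange equations \eqref{EL-eqn}, applied to the Lie group $T^2G$ in place of $G$. Under the left trivialization of the tangent group, $T(T^2G)$ is identified, as in the statement, with $T^2G\ltimes\big((\G{g}\ltimes\G{g})\ltimes_\theta\G{g}\big)$, where $T^2G\cong TG\ltimes_\varphi\G{g}$ and $(\G{g}\ltimes\G{g})\ltimes_\theta\G{g}$ is the Lie algebra of $T^2G$; thus $\C{L}=\C{L}(g,\xi_1,\xi_2,\eta_0,\eta_1,\eta_2)$ is simply a trivialized Lagrangian with base variable $(g,\xi_1,\xi_2)$ and velocity variable $(\eta_0,\eta_1,\eta_2)$. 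Accordingly \eqref{EL-eqn} reads
\[
\frac{d}{dt}\Big(\frac{\delta\C{L}}{\delta\eta_0},\frac{\delta\C{L}}{\delta\eta_1},\frac{\delta\C{L}}{\delta\eta_2}\Big)=T^\ast L_{(g,\xi_1,\xi_2)}\Big(\frac{\delta\C{L}}{\delta g},\frac{\delta\C{L}}{\delta\xi_1},\frac{\delta\C{L}}{\delta\xi_2}\Big)-\ad^\ast_{(\eta_0,\eta_1,\eta_2)}\Big(\frac{\delta\C{L}}{\delta\eta_0},\frac{\delta\C{L}}{\delta\eta_1},\frac{\delta\C{L}}{\delta\eta_2}\Big),
\]
and everything reduces to making the two terms on the right-hand side explicit, componentwise.

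First I would compute the cotangent lift at the identity of the left translation on $T^2G$. The cleanest route is to differentiate the group multiplication \eqref{GrTnG} of $T^2G$ and pair the result against an arbitrary element $(V,W_1,W_2)$ of the Lie algebra of $T^2G$; the general formula \eqref{T^*(e,1)} could also be specialized. Collecting the coefficients of $V$, $W_1$ and $W_2$ gives, for $\alpha\in T^\ast_gG$ and $\beta_1,\beta_2\in\G{g}^\ast$,
\[
T^\ast L_{(g,\xi_1,\xi_2)}(\alpha,\beta_1,\beta_2)=\Big(T^\ast L_g(\alpha)-\ad^\ast_{\xi_1}(\beta_1)-\ad^\ast_{\xi_2}(\beta_2),\ \beta_1-\ad^\ast_{\xi_1}(\beta_2),\ T^\ast L_{\xi_2}(\beta_2)\Big),
\]
in which the feedback term $-\ad^\ast_{\xi_1}(\beta_2)$ is precisely the contribution of the cocycle $\varphi$, and $T^\ast L_{\xi_2}$ is the identity of $\G{g}^\ast$ since the last factor $\G{g}$ is abelian as a group.

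Next I would make the coadjoint term explicit by dualizing the bracket of $(\G{g}\ltimes\G{g})\ltimes_\theta\G{g}$, namely the bracket \eqref{cocycle-double-cross-sum-bracket} whose only non-trivial cocycle datum is $\theta((\xi_1,\xi_2),(\xi_3,\xi_4))=-2\ad_{\xi_4}\xi_2$, as recorded in \eqref{derv-of-gamma}. Using the convention $\langle\ad^\ast_X\mu,Y\rangle=-\langle\mu,[X,Y]\rangle$ from \eqref{coadact}, a short pairing computation yields
\[
\ad^\ast_{(\eta_0,\eta_1,\eta_2)}(\mu_0,\mu_1,\mu_2)=\Big(\ad^\ast_{\eta_0}(\mu_0)+\ad^\ast_{\eta_1}(\mu_1)+\ad^\ast_{\eta_2}(\mu_2),\ \ad^\ast_{\eta_0}(\mu_1)+2\ad^\ast_{\eta_1}(\mu_2),\ \ad^\ast_{\eta_0}(\mu_2)\Big),
\]
the factor $2$ in the middle slot being inherited from $\theta$. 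Substituting the two displays into the trivialized Euler--Lagrange equations above and reading off the three components produces \eqref{ELET3-1}, \eqref{ELET3-2} and \eqref{ELET3-3}, respectively.

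Both computations are entirely routine, and I do not expect a genuine obstacle: the only delicate points are the bookkeeping of the left-trivialization conventions in the cotangent lift --- so that the cocycle $\varphi$ enters only through the single $-\ad^\ast_{\xi_1}$ feedback term --- and the tracking of the numerical factor $2$ descending from the Lie-algebra cocycle $\theta$. Finally, as noted just after the statement, equations \eqref{ELET3-1}--\eqref{ELET3-3} are exactly the input for the subsequent $\G{g}^{\times 2}$-reduction, legitimate by Proposition \ref{prop-higher-order-TG-factors}, which delivers the third-order Euler--Lagrange equations on $T^3G$.
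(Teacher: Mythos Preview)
Your proposal is correct and follows essentially the same route as the paper: both start from the trivialized Euler--Lagrange equations \eqref{EL-eqn} applied to the group $T^2G$, compute the cotangent lift $T^\ast L_{(g,\xi_1,\xi_2)}$ and the coadjoint action of the Lie algebra $(\G{g}\ltimes\G{g})\ltimes_\theta\G{g}$ explicitly, and read off the three components. The only organizational difference is that the paper routes the computation through its own specialized formulas \eqref{mEL}--\eqref{mEL-II} for the cocycle double cross product $T^2G\cong TG\ltimes_\varphi\G{g}$, listing the dual maps $\G{ad}^\ast$, $\G{a}^\ast_{\eta_2}$, $\G{b}^\ast_v$, ${}_v\psi^\ast$, $\theta^\ast_v$ and the lifts $T^\ast\varphi_{(g,\xi_1)}$, $T^\ast\sigma_{\xi_2}$, $T^\ast L_{(g,\xi_1)}$ separately, whereas you compute $T^\ast L$ and $\ad^\ast$ in one stroke directly from the multiplication \eqref{GrTnG} and the bracket; the resulting expressions coincide termwise.
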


\begin{proof}
The 2-cocycle extension nature of $T^2G$ forces the Euler-Lagrange equations to be given by \eqref{mEL} and \eqref{mEL-II}. Keeping in mind that the left action of $\G{g}^{}\times\,2$ on $TG$ is trivial, we have
 \begin{align}
 &\frac{d}{dt}\left(\frac{\delta\C{L}}{\delta \eta_0},\frac{\delta\C{L}}{\delta \eta_1}\right)   = T^\ast L_{(g,\xi_1)}(\frac{\delta\C{L}}{\delta g},\frac{\delta\C{L}}{\delta \xi_1})+ T^\ast\varphi_{(g,\xi_1)}(\frac{\delta\C{L}}{\delta \xi_2})+ T^\ast\sigma_{\xi_2}(\frac{\delta\C{L}}{\delta \xi_2})\label{EPET3}\\
  &\hspace{3cm}-\G{ad}^\ast_{(\eta_0,\eta_1)}\left(\frac{\delta\C{L}}{\delta \eta_0},\frac{\delta\C{L}}{\delta \eta_1}\right)+\G{a}_{\eta_2}^{\ast}\left(  \frac{\delta\C{L}}{\delta \eta_2}\right)+\theta^{\ast}_{(\eta_0,\eta_1)}\left(  \frac{\delta\C{L}}{\delta \eta_2}\right)\notag,\\
&\frac{d}{dt}\frac{\delta\C{L}}{\delta\eta_2}   =T^\ast L_{\xi_2}(\frac{\delta\C{L}}{\delta \xi_2})-\ad_{\eta_2}^{\ast}\left(\frac{\delta\C{L}}{\delta\eta_2}\right)-\mathfrak{b}_{(\eta_0,\eta_1)}^{\ast}\left(\frac
{\delta\C{L}}{\delta \eta_0},\frac{\delta\C{L}}{\delta \eta_1}\right) - 
{}_{(\eta_0,\eta_1)}\psi^{\ast}\left(\frac{\delta\C{L}}{\delta \xi_2}\right).\label{EPET3-II}
\end{align}
The claim then follows by substituting the dual mappings 
\begin{align*}
 &\G{ad}_{(\eta_0,\eta_1)}^{\ast}: \G{g}^{\ast} \oplus \G{g}^{\ast}\to \G{g}^{\ast} \oplus \G{g}^{\ast}, \qquad(\mu_0,\mu_1)\mapsto (\ad^{\ast}_{\eta_0}\mu_0+\ad^{\ast}_{\eta_1}\mu_1,\ad^{\ast}_{\eta_0}\mu_1),\\
 & \G{a}_{\eta_2}^{\ast}: \G{g}^{\ast}\to\G{g}^{\ast} \oplus \G{g}^{\ast}, \qquad \mu_2\mapsto (-\ad^\ast_{\eta_2}\mu_2,0), \\
&\mathfrak{b}_{v}^{\ast}:  \G{g}^{\ast} \oplus \G{g}^{\ast}\to  \G{g}^{\ast}, \qquad (\mu_0,\mu_1)\mapsto \mathfrak{b}_{v}^{\ast}(\mu_0,\mu_1)=0, \\
 & {}_{(\eta_0,\eta_1)}\psi^{\ast}:\G{g}^{\ast}\to \G{g}^{\ast}, \qquad \mu_2\mapsto {}_{(\eta_0,\eta_1)}\psi^{\ast}(\mu_2)=\ad_{\eta_0}^{\ast}(\mu_2), \\
 & \t_{(\eta_0,\eta_1)}^{\ast}:\G{g}^{\ast}\to \G{g}^{\ast} \oplus \G{g}^{\ast}, \qquad \mu_2 \mapsto \t_{(\eta_0,\eta_1)}^{\ast}(\mu_2)=(0,-2\ad^{\ast}_{\eta_1}\mu_2),
 \end{align*}
 and the cotangent lifts 
 \begin{align*}
 &T^{\ast}\varphi_{(g,\xi_1)}:\G{g}^{\ast}\to \G{g}^{\ast} \oplus \G{g}^{\ast},\qquad \mu_2\mapsto T^{\ast}\varphi_{(g,\xi_1)}(\mu_2)=(0,-\ad^{\ast}_{\xi_1}(\mu_2)),\\
 &T^{\ast}\sigma_{\xi_2}:\G{g}^{\ast}\to \G{g}^{\ast} \oplus \G{g}^{\ast},\qquad \mu_2\mapsto T^{\ast}\sigma_{\xi_2}(\mu_2)=(-\ad^{\ast}_{\xi_2}(\mu_2),0),\\
 &T^{\ast}L_{(g,\xi_1)}(\mu_0,\mu_1)=(T^\ast L_g\mu_0-\ad^{\ast}_{\xi_1}(\mu_1), \mu_1).
 \end{align*} 
\end{proof}

The reduction by $T^2G \cong TG \ltimes_\vp \G{g}$ thus yields the Euler-Poincar\'e equations on $(\G{g}\ltimes \G{g}) \ltimes_\t \G{g}$.

\begin{corollary}
 Given a  reduced Lagrangian $\ell=\ell(\nu,\eta,\dot{\eta})=\ell(\eta_0,\eta_1,\eta_2)$ on $(\G{g}\ltimes \G{g}) \ltimes_\t \G{g}$, the Euler-Poincar\'{e} equations turn out to be
\begin{align}
&\frac{d}{dt}\frac{\delta\ell}{\delta \eta_0}=-\ad^{\ast}_{\eta_0}(\frac{\delta\ell}{\delta \eta_0})-\ad^{\ast}_{\eta_1}(\frac{\delta\ell}{\delta \eta_1})-\ad^{\ast}_{\eta_2}(\frac{\delta\ell}{\delta \eta_2}),\label{EPET3-1}\\
&\frac{d}{dt}\frac{\delta\ell}{\delta \eta_1}=-\ad^{\ast}_{\eta_0}(\frac{\delta\ell}{\delta \eta_1})-2\ad^{\ast}_{\eta_1}(\frac{\delta\ell}{\delta \eta_2}),\label{EPET3-2}\\
&\frac{d}{dt}\frac{\delta\ell}{\delta \eta_2}=-\ad^{\ast}_{\eta_0}(\frac{\delta\ell}{\delta\eta_2}).\label{EPET3-3}
\end{align}
\end{corollary}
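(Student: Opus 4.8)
The plan is to obtain \eqref{EPET3-1}--\eqref{EPET3-3} as the $T^2G$-reduction of the Euler-Lagrange equations \eqref{ELET3-1}--\eqref{ELET3-3} of the preceding proposition, mirroring the passage from the general Euler-Lagrange equations \eqref{mEL}--\eqref{mEL-II} to the general Euler-Poincar\'e corollary \eqref{EPE}--\eqref{EPE-II}. First I would observe that in the decomposition $T(T^2G) \cong (TG\ltimes_\vp\G{g})\ltimes\big((\G{g}\ltimes\G{g})\ltimes_\t\G{g}\big)$ the factor $T^2G \cong TG\ltimes_\vp\G{g}$ is a (closed) subgroup acting on $T(T^2G)$ freely and properly by left translations, with quotient $T^2G\backslash T(T^2G)$ the Lie algebra $(\G{g}\ltimes\G{g})\ltimes_\t\G{g}$; hence a left $T^2G$-invariant Lagrangian descends to a reduced Lagrangian $\ell = \ell(\eta_0,\eta_1,\eta_2)$ depending on the Lie-algebra coordinates only.

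The heart of the argument is then simple substitution. A reduced $\ell$ carries no dependence on the group coordinates $g,\xi_1,\xi_2$, so $\delta\ell/\delta g = 0$, $\delta\ell/\delta\xi_1 = 0$ and $\delta\ell/\delta\xi_2 = 0$. Inserting these into \eqref{ELET3-1} annihilates the terms $T^\ast L_g(\delta\C{L}/\delta g)$, $\ad^\ast_{\xi_1}(\delta\C{L}/\delta\xi_1)$ and $\ad^\ast_{\xi_2}(\delta\C{L}/\delta\xi_2)$; into \eqref{ELET3-2} it annihilates $\delta\C{L}/\delta\xi_1$ and $\ad^\ast_{\xi_1}(\delta\C{L}/\delta\xi_2)$; and into \eqref{ELET3-3} it annihilates $T^\ast L_{\xi_2}(\delta\C{L}/\delta\xi_2)$, leaving exactly \eqref{EPET3-1}, \eqref{EPET3-2} and \eqref{EPET3-3}. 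Equivalently, for a self-contained derivation one may invoke the general Euler-Poincar\'e corollary \eqref{EPE}--\eqref{EPE-II} applied to the cocycle double cross sum $(\G{g}\ltimes\G{g})\ltimes_\t\G{g}$ with $\G{m} = \G{g}\ltimes\G{g}$ and $\G{h} = \G{g}$, feeding in the dual maps $\G{ad}^\ast_{(\eta_0,\eta_1)}$, $\G{a}^\ast_{\eta_2}$, $\mathfrak{b}^\ast_{(\eta_0,\eta_1)}$, ${}_{(\eta_0,\eta_1)}\psi^\ast$ and $\theta^\ast_{(\eta_0,\eta_1)}$ computed in the proof of that proposition; the triviality of the $\G{h}$-action on $\G{m}$ (the hallmark of a $2$-cocycle extension) then kills the $\overset{\ast}{\lt}$ and $\mathfrak{b}^\ast$ terms, while the abelianness of $\G{h}$ kills the $\ad^\ast_{\eta_2}$ contribution in the last equation.

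I do not anticipate a genuine obstacle: the whole computation is routine once the preceding proposition is in place. The only point that rewards a moment's care is confirming that $T^2G\backslash T(T^2G)$ really is $(\G{g}\ltimes\G{g})\ltimes_\t\G{g}$ and, along the alternative route, keeping the numerical coefficients attached to the $\ad^\ast$ operators on the $\G{g}\ltimes\G{g}$-summand consistent with the bracket \eqref{cocycle-double-cross-sum-bracket} and the explicit cocycle $\t$.
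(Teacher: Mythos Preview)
Your proposal is correct and follows exactly the route the paper takes: the corollary is obtained from the preceding proposition by the $T^2G$-reduction, i.e., by setting the partial derivatives with respect to the base variables $g,\xi_1,\xi_2$ to zero in \eqref{ELET3-1}--\eqref{ELET3-3}. Your alternative verification via the general Euler--Poincar\'e formulas \eqref{EPE}--\eqref{EPE-II}, using the dual maps listed in the proof of the proposition, is equally valid and amounts to the same computation.
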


\begin{remark}
Summing \eqref{EPET3-1}, the first derivative of \eqref{EPET3-2}, and the second derivative of \eqref{EPET3-3}, we obtain 
\begin{equation}
\left(\frac{d}{dt}+\ad^{\ast}_{\eta_0}\right)\left(\frac{\delta\ell}{\delta \eta_0}-\frac{d}{dt}\frac{\delta\ell}{\delta \eta_1}+\frac{d^2}{dt^2}\frac{\delta\ell}{\delta \eta_2}\right)=0
\end{equation}
which coincides with the third order Euler-Poincar\'{e} equation given in \cite[Subsect. 3.1]{gay2012invariant}.
\end{remark}

\section{Illustrations}

\subsection{The Kepler Problem}~

The \emph{energy-momentum space} of the Kepler problem was defined in \cite[Subsect. 3.10]{Marl12} as $\mathbb{R}\times \overrightarrow{\varepsilon} \times \overrightarrow{\varepsilon}$, where $\overrightarrow{\varepsilon}$ stands for the 3-dimensional Euclidean vector space, and $\mathbb{R}$ encodes the energy levels. Explicitly, the bracket on $\overrightarrow{\varepsilon} \times \overrightarrow{\varepsilon}$ that correspond to the energy level $e$ was given by
\begin{equation}
\left[(\vec{v}_1,\vec{\eta}_1),(\vec{v}_2,\vec{\eta}_2)\right]_e =\Big(   \vec{\eta}_1\times \vec{v}_2-\vec{\eta}_2\times \vec{v}_1  ,-\vec{\eta}_1\times \vec{\eta}_2 +\frac{2 e}{m^3k^2}\vec{v}_1\times \vec{v}_2\Big)
\end{equation}
for any $(\vec{v}_1,\vec{\eta}_1),(\vec{v}_2,\vec{\eta}_2)\in \overrightarrow{\varepsilon} \times \overrightarrow{\varepsilon}$. 

The space $\overrightarrow{\varepsilon} \times \overrightarrow{\varepsilon}$ may be considered as a cocycle double cross product of the pair $(\overrightarrow{\varepsilon} ,\overrightarrow{\varepsilon})$ through the twisted cocycle 
\begin{align}
&\t :\overrightarrow{\varepsilon}\otimes \overrightarrow{\varepsilon} \to \overrightarrow{\varepsilon}, \qquad (\vec{v }, \vec{w})\mapsto \t(\vec{v}, \vec{w}) =\frac{2 e}{m^3 k^2} \vec{v}\times\vec{w} ,  \label{theta-map-Kep} 
\end{align}
 and the left action
 \begin{align}
 &\rt: \overrightarrow{\varepsilon}\otimes \overrightarrow{\varepsilon}\to \overrightarrow{\varepsilon}, \qquad (\vec{\eta},\vec{v})\mapsto\vec{\eta} \rt\vec{ v} = \vec{\eta} \times \vec{ v}.\label{left-map-Kep} 
 \end{align} 
The other linear maps are zero, such that
\begin{align}
&\phi :\overrightarrow{\varepsilon}\otimes \overrightarrow{\varepsilon}\to \overrightarrow{\varepsilon}, \qquad \phi(\vec{v},\vec{w}) = 0 \label{phi-map-sym} \\
&\psi: \overrightarrow{\varepsilon}\otimes \overrightarrow{\varepsilon}\to \overrightarrow{\varepsilon}, \qquad \psi(\vec{\eta},\vec{v})=0. \label{right-action-sym}
\end{align}
Accordingly, we have 
\begin{align}
&\mathfrak{b}_v=\overrightarrow{\varepsilon}\to \overrightarrow{\varepsilon}, \qquad \vec{\eta}\mapsto\mathfrak{b}_v(\vec{\eta})=\vec{\eta} \rt \vec{v} = \vec{\eta} \times \vec{ v}, \label{right-action-sym-bv}\\
&\mathfrak{a}_{\eta}= \overrightarrow{\varepsilon}\to \overrightarrow{\varepsilon}, \qquad \vec{v}\mapsto \mathfrak{a}_{\eta}(\vec{v})=\psi(\vec{\eta}, \vec{v})= 0,\label{left-action-sym-an}\\
&{}_{v}\psi: \overrightarrow{\varepsilon}\to \overrightarrow{\varepsilon}, \qquad\vec{\eta}\mapsto {}_{v}\psi(\vec{\eta})=\psi(\vec {\eta},\vec{v})=0,\\
& \t_v: \overrightarrow{\varepsilon}\to \overrightarrow{\varepsilon}, \qquad \vec{w} \mapsto \t_v(\vec{w})=\t(\vec{v},\vec{w}),\\
 &\G{ad}_v:\overrightarrow{\varepsilon}\to \overrightarrow{\varepsilon}, \qquad \vec{w}\mapsto [[\vec{v},\vec{w}]]=0,\\
 &\ad_{\eta}:\overrightarrow{\varepsilon}\to \overrightarrow{\varepsilon}, \qquad \vec{\eta}\mapsto \ad_{\eta}\vec{\beta}=[\vec{\eta},\vec{ \beta}]=-\vec{\eta}\times\vec{\beta}.\label{dual_ad}
\end{align}
Along the way to present the equations of motion, we shall need the dual mappings as well. The left action \eqref{left-map-Kep}, first, dualizes to a right action 
\begin{equation}\label{right-action-sym-dual}
\overset{\ast}{\lt }:\overrightarrow{\varepsilon}^\ast\ot\overrightarrow{\varepsilon} \to \overrightarrow{\varepsilon}^\ast ,\quad (\vec{\mu},\vec{\eta})\mapsto\vec{\mu} \overset{\ast}{\lt} \vec{\eta}=\vec{\mu}\times\vec{\eta},
\end{equation}
while \eqref{right-action-sym} transposes into another trivial mapping
\begin{equation}\label{left-action-sym-dual}
\psi^{\ast}: \overrightarrow{\varepsilon} \ot\overrightarrow{\varepsilon}^\ast\to \overrightarrow{\varepsilon}^\ast  ,\quad (\vec{v},\vec{\alpha})\mapsto\psi^{\ast} (\vec{v},\vec{\alpha})=0,
\end{equation}
just as the dual of \eqref{phi-map-sym} 
\begin{equation}
\phi^{\ast} :\overrightarrow{\varepsilon}\ot \overrightarrow{\varepsilon}^\ast \to \overrightarrow{\varepsilon}^\ast, \qquad (\vec{v},\vec{\mu}) \mapsto\phi^{\ast}(\vec{v},\vec{\mu}) = 0.
\end{equation}
We thus have
\begin{align}
&\mathfrak{b}^{\ast}_v= \overrightarrow{\varepsilon}^\ast\to \overrightarrow{\varepsilon}^{\ast} , \qquad \vec{\mu}\mapsto\mathfrak{b}^{\ast}_v\vec{\mu} = -\vec{v}\times\vec{\mu}, \\
&\mathfrak{a}^{\ast}_{\eta}=  \overrightarrow{\varepsilon}^{\ast} \to \overrightarrow{\varepsilon}^\ast , \qquad \vec{\alpha}\mapsto\mathfrak{a}^{\ast}_{\eta}(\vec{\alpha})= 0,\\
&{}_v\psi^{\ast}: \overrightarrow{\varepsilon}^\ast \to \overrightarrow{\varepsilon}^\ast ,\qquad \vec{\alpha}\mapsto{}_v\psi^{\ast}(\vec{\alpha})=0,\label{left-action-sym-dual}\\
&\theta^\ast_v:\overrightarrow{\varepsilon}^\ast \to \overrightarrow{\varepsilon}^\ast,\qquad \vec{\alpha}\mapsto\theta^\ast_v(\vec{\alpha})=-\frac{2 e}{m^3 k^2} \vec{v}\times \vec{\alpha},\\
 &\G{ad}_v^{\ast}:\overrightarrow{\varepsilon}^\ast\to \overrightarrow{\varepsilon}^\ast, \qquad \vec{\mu}\mapsto \G{ad}_v^{\ast}(\vec{\mu})=0,\\
&\ad^{\ast}_{\eta}:\overrightarrow{\varepsilon}^{\ast}\to \overrightarrow{\varepsilon}^{\ast},\qquad \vec{\alpha}\mapsto \ad^{\ast}_{\eta}(\vec{\alpha})=\vec{\eta}\times \vec{\alpha}.
\end{align}
Substituting these into \eqref{EPE} and \eqref{EPE-II}, we obtain the Euler-Poincar\'{e} equations  
\begin{align}
&\frac{d}{dt}\frac{\delta \ell}{\delta \vec{v}}=\frac{\delta \ell}{\delta \vec{v}}\times \vec{n}+\frac{2 e}{m^3 k^2} \frac{\delta \ell}{\delta \vec{\eta}}\times \vec{v}\\
&\frac{d}{dt}\frac{\delta \ell}{\delta \vec{\eta}}=\frac{\delta \ell}{\delta \vec{\eta}}\times\vec{\eta}-\vec{v}\times  \frac{\delta \ell}{\delta \vec{v} }
\end{align}
associated to a (reduced) Lagrangian $\ell = \ell(\vec{v},\vec{\eta})$ on $\overrightarrow{\varepsilon} \times \overrightarrow{\varepsilon}$.

Similarly, given a (reduced) Hamiltonian $\G{H} = \G{H}(\kappa,\lambda)$ on $\overrightarrow{\varepsilon}^\ast \times \overrightarrow{\varepsilon}^\ast$, the Lie-Poisson equations \eqref{LiePoissonEqn-I} and \eqref{LiePoissonEqn-II} become
\begin{align}
&\frac{d \vec{\kappa}}{dt}=-\vec{\kappa}\times(\frac{\delta \G{H}}{\delta \vec{\lambda}})+ \frac{2 e}{m^3 k^2}\frac{\delta \G{H}}{\delta \vec{\kappa}}\times \vec{\lambda},\\
&\frac{d \vec{\lambda}}{dt}=\frac{\delta \G{H}}{\delta \vec{\lambda}}\times \vec{\lambda}+\frac{\delta \G{H}}{\delta \vec{\kappa}}\times \vec{\kappa}.
\end{align}

\subsection{Nonlinear Tokamak Dynamics}~

As a second illustration, we shall now consider the Hamiltonian four-field model of \cite{HazeHsuMorr87,HazeKotsMorr85}. The underlying Lie algebra of this model has been reconsidered in \cite[Subsect. 2.3.2]{ThifMorr00} from the point of view of Lie algebra extensions of the form $\G{g} \oplus \G{g} \oplus \ldots \oplus \G{g}$, say $n$-copies, with a bracket of the form
\[
[(\xi_1,\ldots,\xi_n),(\eta_1,\ldots,\eta_n)] := \left(\sum_{i,j=1}^n W_1^{ij}[\xi_i,\eta_j], \sum_{i,j=1}^n W_2^{ij}[\xi_i,\eta_j], \ldots, \sum_{i,j=1}^n W_n^{ij}[\xi_i,\eta_j]\right)
\]
for suitable constants $W_k^{ij}$, $1\leq i,j,k\leq n$. Any such Lie algebra is necessarily subsumed in the most general construction introduced in \cite{EsenGuhaSutl22}. In particular, the Lie algebra underlying the four-field model is of the form of a unified product, that is a cocycle double cross product. More precisely, along the lines of \cite{ThifMorr00}, $\G{g}$ being the Lie algebra of the group of volume preserving diffeomorphisms (on a two-dimensional domain), and $\G{g}_1$, $\G{g}_2$, $\G{g}_3$ and $\G{g}_4$ being copies of $\G{g}$, the bracket is given by
\begin{align}\label{Bracket_tokamak}
\begin{split}
&[(v, \beta, w, \alpha) , (v', \beta', w', \alpha')] =\\ 
&\hspace{2cm} \Big([\alpha,v']+ [v, \alpha'],[\alpha,\beta']+ [\beta, \alpha'], [\alpha,w']+[w,\alpha']-B_i([\beta,v']+[v,\beta']),[\alpha,\alpha'] \Big)
\end{split}
\end{align}
for any two $( v,\beta, w,\alpha),( v',\beta', w',\alpha')\in \G{g}_2 \oplus \G{g}_3\oplus  \G{g}_4\oplus \G{g}_1$\footnote{The indexing serves the presentation to fit our convention $\G{m}\rtimes_\t \G{h}$ with $\G{h}$ being the subalgebra.}, where $B_i$ is a parameter that measures the compressibility. We thus have a cocycle double cross sum Lie algebra $(\G{g}_2\oplus\G{g}_3)\bowtie_\t(\G{g}_4\oplus\G{g}_1 ):=(\G{g}_2 \oplus\G{g}_3)\oplus (\G{g}_4\oplus\G{g}_1 )$, where the left action \eqref{left-action}, and the linear maps \eqref{phi-map} - \eqref{right-action} appear as 
\begin{align}
&\rt: (\G{g}_4 \oplus\G{g}_1)\ot (\G{g}_2\oplus\G{g}_3)\to \G{g}_2\oplus\G{g}_3, \quad (w,\alpha) \rt (v,\beta) = (\ad_\alpha v, \ad_\alpha\beta),\label{left-action-Tok}\\
&\phi :(\G{g}_2 \oplus\G{g}_3)\ot(\G{g}_2\oplus\G{g}_3) \to \G{g}_2\oplus\G{g}_3, \quad \phi((v,\beta),(v',\beta')) =(0,0), \label{phi-map-Tok} \\
&\t :(\G{g}_2\oplus\G{g}_3) \ot (\G{g}_2\oplus\G{g}_3) \to \G{g}_4\oplus\G{g}_1, \quad \t((v,\beta),(v',\beta'))=(-B_i(\ad_\beta v'+\ad_v\beta'),0) ,  \label{theta-map-Tok} \\
&\psi: (\G{g}_4\oplus\G{g}_1) \ot(\G{g}_2\oplus\G{g}_3)\to \G{g}_4\oplus\G{g}_1, \quad \psi((w,\alpha) ,( v,\beta))=(0,0). \label{right-action-Tok}
\end{align}
The adjoint action of $\G{g}_4\oplus\G{g}_1$ on itself, on the other hand, takes the form
\begin{align*}
\ad: (\G{g}_4\oplus\G{g}_1)\ot (\G{g}_4\oplus\G{g}_1 )\to \G{g}_4\oplus\G{g}_1,\quad \ad_{(w,\alpha)}{(w', \alpha')}= & ([\alpha,w'] + [w,\alpha'],[\alpha,\alpha']) \\
 =& (\ad_\a w' +\ad_w\a' , \ad_\a \a').
\end{align*}
Consequently, given any $(v,\b) \in \G{g}_2\oplus \G{g}_3$ and any $(w,\a) \in \G{g}_4\oplus \G{g}_1$, we have the mappings 
\begin{align}
&\mathfrak{b}_{(v, \beta)}=\G{g}_4\oplus\G{g}_1\to \G{g}_2\oplus\G{g}_3, \qquad(w,\alpha)\mapsto(w,\alpha) \rt (v,\beta) = (\ad_\alpha v,\ad_\alpha\beta),\label{b-action-Tok}\\
&\mathfrak{a}_{(w,\eta)}= \G{g}_2\oplus\G{g}_3\to \G{g}_4\oplus\G{g}_1, \qquad( v,\beta))\mapsto \psi((w,\alpha) ,( v,\beta))=(0,0),\label{a-action-Tok}\\
&{}_{(v,\beta)}\psi: \G{g}_4\oplus\G{g}_1\to\G{g}_4\oplus\G{g}_1, \qquad (w,\alpha)\mapsto {}_{(v,\beta)}\psi(w,\alpha)=(0,0),\\
& \t_{(v,\beta)}: \G{g}_2\oplus\G{g}_3 \to \G{g}_4\oplus\G{g}_1, \qquad \vec{w} \mapsto \t_{(v,\beta)}(v',\beta')=(-B_i(\ad_\beta v'+\ad_v\beta'),0),\\
 &\G{ad}_{(v,\beta)}:\G{g}_2\oplus\G{g}_3 \to \G{g}_2\oplus\G{g}_3, \qquad (v',\beta')\mapsto [[(v,\beta),(v',\beta')]]=(0,0),\\
 &\ad_{(w,\alpha)}:\G{g}_4\oplus\G{g}_1 \to\G{g}_4\oplus\G{g}_1, \quad(w',\alpha')\mapsto \ad_{(w,\alpha)}(w',\alpha')=(\ad_\a w' +\ad_w\a' , \ad_\a \a').\label{dual_ad_Tok}
\end{align}
Next, dualizing \eqref{left-action-Tok}-\eqref{right-action-Tok} we obtain
\begin{align}\label{Dual-left-action-Tok}
&\overset{\ast}{\vartriangleleft} : (\G{g}^{\ast}_2\oplus\G{g}_3^{\ast})\oplus (\G{g}_4\oplus\G{g}_1)\to \G{g}_2^{\ast}\oplus\G{g}_3^{\ast}, \qquad (\overline{v},\overline{\beta})\overset{\ast}{\vartriangleleft}(w,\alpha)  = (-\ad^*_{\alpha}{\overline{v}}, -\ad^*_{\alpha}{\overline{\beta}}),\\
&\phi_{(v,\beta)}^{\ast} :\G{g}_2^\ast\oplus\G{g}_3^\ast \to \G{g}_2^\ast\oplus\G{g}_3^\ast, \qquad \phi_{(v,\beta)}^\ast(\overline{v},\overline{\beta}) = (0,0), \label{Dual-phi-map-Tok} \\
&\t_{(v,\beta)}^{\ast} :\G{g}_4^\ast\oplus\G{g}_1^{\ast}\to \G{g}_2^\ast\oplus\G{g}_3^\ast , \qquad \t_{(v,\beta)}^{\ast}(\overline{w},\overline{\alpha})=(B_i\ad^\ast_{\beta} \overline{w}, B_i\ad^*_v\overline{w}),  \label{Dual-theta-map-Tok} \\
&{}_{(w,\alpha)}\psi^{\ast}: \G{g}^\ast_4\oplus\G{g}^\ast_1   \to \G{g}_2^\ast\oplus\G{g}_3^\ast, \qquad {}_{(w,\alpha)}\psi^\ast( \overline{w},\overline{\alpha})=(0,0), \label{Dual-right-action-Tok}
\end{align}
while the dualization of \eqref{b-action-Tok} and \eqref{a-action-Tok} yield
\begin{align}
&\mathfrak{b}^{\ast}_{(v, \beta)}=  \G{g}_2^{\ast}\oplus\G{g}_3^{\ast} \to \G{g}_4^{\ast}\oplus\G{g}_1^{\ast}, \qquad (\overline{v},\overline{\beta})\mapsto\mathfrak{b}^{\ast}_{(v,\beta)}(\overline{v},\overline{\beta}) = (0, \ad_v^{\ast}{\overline{v}}+\ad_{\beta}^{\ast}{\overline{\beta}}), \\
&\mathfrak{a}^{\ast}_{(w,\eta)}= \G{g}_4^{\ast}\oplus\G{g}_1^{\ast}\to \G{g}_2^{\ast}\oplus\G{g}_3^{\ast} , \qquad (\overline{w},\overline{\alpha})\mapsto\mathfrak{a}^{\ast}_{(w, \eta)}(\overline{w},\overline{\alpha})= (0,0).
\end{align} 
Finally the coadjoint action of the Lie algebra $\G{g}_4\oplus\G{g}_1$ on its dual $\G{g}_4^{\ast}\oplus\G{g}_1^{\ast}$ may be expressed by
\begin{equation}
\ad^{\ast}: (\G{g}_4\oplus\G{g}_1)\oplus (\G{g}_4^{\ast}\oplus\G{g}_1^{\ast} )\to \G{g}_4^{\ast}\oplus\G{g}_1^{\ast},\qquad \ad^{\ast}_{(w,\alpha)}(\overline{w},\overline{\alpha})=(-\ad^{\ast}_{\alpha}\overline{w}, -\ad^{\ast}_{w}\overline{w}-\ad^{\ast}_{\alpha}\overline{\alpha}).
\end{equation}
Substituting all these into \eqref{EPE} and \eqref{EPE-II}, we arrive at the Euler-Poincar\'{e} equations 
 \begin{align}
& \frac{d}{dt}\frac{\delta\ell}{\delta v}   = -\ad^{\ast}_{\alpha}\frac{\delta\ell}{\delta v} +B_i \ad^{\ast}_{\beta}\frac{\delta\ell}{\delta w},\\
& \frac{d}{dt}\frac{\delta\ell}{\delta \beta}   = -\ad^{\ast}_{\alpha}\frac{\delta\ell}{\delta\beta} +B_i \ad^{\ast}_{v}\frac{\delta\ell}{\delta w},\\
&\frac{d}{dt}\frac{\delta\ell}{\delta w}  =\ad^{\ast}_{\alpha}\frac{\delta\ell}{\delta w},\\
&\frac{d}{dt}\frac{\delta\ell}{\delta\alpha}  =\ad^{\ast}_{w}\frac{\delta\ell}{\delta w}+\ad^{\ast}_{\alpha}\frac{\delta\ell}{\delta \alpha}-\ad^{\ast}_{v}\frac{\delta\ell}{\delta v}-\ad^{\ast}_{\beta}\frac{\delta\ell}{\delta \beta}.
\end{align}
associated to a reduced Lagrangian $\ell = \ell(v,\b,w,\a)$ on $\G{g}_2\oplus \G{g}_3 \oplus \G{g}_4\oplus \G{g}_1$.

On the other extreme, given a reduced Hamiltonian $\G{H} = \G{H}(\overline{v},\overline{\beta},\overline{w},\overline{\alpha})$ on $\G{g}_2^{\ast}\oplus\G{g}_3^{\ast}\oplus \G{g}_4^{\ast}\oplus\G{g}_1^{\ast}$, the Lie-Poisson equations \eqref{LiePoissonEqn-I} and \eqref{LiePoissonEqn-II} take the form
\begin{align}
&\frac{d \overline{v}}{dt}=\ad^{\ast}_{\frac{\delta H }{\delta \overline{\alpha}}}\overline{v}-B_i \ad^{\ast}_{\frac{\delta H }{\delta \overline{\beta}}}\overline{w},\\
&\frac{d \overline{\beta}}{dt}=\ad^{\ast}_{\frac{\delta H }{\delta \overline{\alpha}}}\overline{\beta}- B_i \ad^{\ast}_{\frac{\delta H }{\delta \overline{v}}}\overline{w},\\
&\frac{d \overline{w}}{dt}=-\ad^{\ast}_{\frac{\delta H }{\delta \overline{\alpha}}}\overline{w},\\
&\frac{d \overline{\alpha}}{dt}=-\ad^{\ast}_{\frac{\delta H }{\delta \overline{w}}}\overline{w}-\ad^{\ast}_{\frac{\delta H }{\delta \overline{\alpha}}}\overline{\alpha}+\ad^{\ast}_{\frac{\delta H }{\delta \overline{v}}}\overline{v}+\ad^{\ast}_{\frac{\delta H }{\delta \overline{\beta}}}\overline{\beta}.
\end{align}


\bibliographystyle{plain}
\bibliography{references}{}

\end{document}